\documentclass[11pt,a4paper]{article}

\usepackage[margin=1.1in]{geometry}
\usepackage{amsmath,amssymb,amsthm,mathtools}
\usepackage{microtype}
\usepackage[hidelinks]{hyperref}
\usepackage{booktabs}
\usepackage{enumitem}
\usepackage{placeins}
\usepackage{needspace}

\newtheorem{theorem}{Theorem}[section]
\newtheorem{lemma}[theorem]{Lemma}
\newtheorem{proposition}[theorem]{Proposition}
\newtheorem{corollary}[theorem]{Corollary}
\theoremstyle{definition}
\newtheorem{definition}[theorem]{Definition}

\theoremstyle{remark}
\newtheorem{remark}[theorem]{Remark}

\newcommand{\wdim}{\operatorname{wdim}}

\newcommand{\Z}{\mathbb Z}
\newcommand{\ind}{\mathbf1}

\title{Weak \(k\)-metric dimension of Hamming graphs:\\
rectangular products and near-maximum parameters}
\author{Aryan Kumar\\[0.4em]
\small Department of Mathematics and Statistics\\
\small San Jos\'e State University\\
\small San Jos\'e, California, USA}
\date{}

\hypersetup{
  pdftitle={Weak k-metric dimension of Hamming graphs: rectangular products and near-maximum parameters},
  pdfauthor={Aryan Kumar}
}

\begin{document}
\maketitle

\begin{abstract}
We determine weak \(k\)-metric dimensions for several families of Hamming
graphs. For \(n\ge3\), \(m>n\), and \(3\le k\le2n\), a cyclic
construction proves Conjecture~6.1 of Fern\'andez, Klav\v{z}ar, Kuziak,
Mu\~noz-M\'arquez and Yero (2026) on \(K_n\square K_m\).
For these rectangular products, we also prove that
\(\wdim_2(K_n\square K_m)=m\) exactly when \(m\ge2n-2\).
For hypercubes \(Q_d\) with \(d\ge2\), we show that consecutive
parameters \(2s-1\) and \(2s\) have identical weak resolving sets.
Near the maximum parameter, a reduction to restricted-distance binary
codes determines \(\wdim_{2^d-t}(Q_d)\) for every feasible deficit
\(0\le t\le15\). For \(L\ge1\) and \(t\in\{2L,2L+1\}\), we
prove the stabilization formula
\(\wdim_{2^d-t}(Q_d)=2^d-L\) for \(d\ge2^L+1\), and show that
this threshold is sharp. For each fixed \(q\ge3\) and deficit \(t\),
we also determine \(\wdim_{2q^{d-1}-t}(K_q^{\square d})\) in all
sufficiently large dimensions, with an explicit sufficient condition.
\end{abstract}

\noindent\textbf{Keywords.} weak \(k\)-metric dimension; Hamming graph;
hypercube; restricted-distance code; Hadamard matrix.

\section{Introduction}

Let \(G\) be a finite connected graph with graph distance \(d_G\). For
\(S\subseteq V(G)\), put
\[
\Delta_S(x,y)=\sum_{w\in S}|d_G(x,w)-d_G(y,w)|.
\]
For a single landmark we abbreviate \(\Delta_{\{w\}}\) to \(\Delta_w\).
A set \(S\) is \emph{weak \(k\)-resolving} if
\(\Delta_S(x,y)\ge k\) for every pair of distinct vertices. Its minimum
cardinality is the \emph{weak \(k\)-metric dimension} \(\wdim_k(G)\).
This parameter was introduced by Peterin, Sedlar, \v{S}krekovski and
Yero in 2024~\cite{PeterinSedlarSkrekovskiYero2024}. At \(k=1\) it is the
classical metric dimension~\cite{Slater1975,HararyMelter1976}.
The \(k\)-metric dimension introduced by Estrada-Moreno,
Rodr\'iguez-Vel\'azquez and Yero~\cite{EstradaMorenoRodriguezVelazquezYero2015}
requires at least \(k\) landmarks to distinguish each pair. Weak
\(k\)-metric dimension instead sums the magnitudes of their distance
differences, so a single landmark can contribute more than one.
The largest feasible parameter is
\[
\kappa(G)=\min_{x\ne y}\Delta_{V(G)}(x,y).
\]
Each landmark contributes at most one to an adjacent pair, so
\(\wdim_k(G)\ge k\).

A \emph{Hamming graph} is a Cartesian product of complete graphs;
its distance counts the coordinates in which two vertices differ.
Fern\'andez, Klav\v{z}ar, Kuziak, Mu\~noz-M\'arquez and Yero (2026)
determined the weak \(k\)-metric dimension of \(K_n\square K_n\) and
proved that
\[
\kappa(K_{n_1}\square\cdots\square K_{n_d})
=2n_2\cdots n_d\qquad(n_1\ge\cdots\ge n_d\ge2)
\]
for \(d\ge2\)~\cite{FernandezKlavzarKuziakMunozYero2026}.
They conjectured an exact formula for rectangular products and asked
for results on higher-dimensional Hamming graphs. We address both
questions through coordinate counts. Farhan, Kuziak and
Yero~\cite{FarhanKuziakYero2026} have also studied weak \(k\)-metric
dimension for the direct product \(K_n\times K_n\), whose adjacency
relation differs from the Cartesian products considered here.

For rectangular products, an incidence matrix records the selected
landmarks. Bounds on its column sums give the conjectured lower bound;
a cyclic matrix attains it. More precisely, for \(n\ge3\), \(m>n\),
and \(3\le k\le2n\),
\[
\wdim_k(K_n\square K_m)=
\begin{cases}
m\lceil k/2\rceil,&k\text{ even},\\
m\lceil k/2\rceil-1,&k\text{ odd}.
\end{cases}
\]
For \(k=2\), we determine the exact equality threshold
\(m\ge2n-2\), refining the proposed sufficient range \(m\ge2n\).

In higher dimensions we instead record the omitted vertices. For the
hypercube \(Q_d=K_2^{\square d}\), write \(N=2^d\) and \(k=N-t\).
The columns of the omitted-vertex matrix form a binary code with
two-sided distance restrictions. Once pairs at distance at least three
have sufficient slack, these restrictions determine the weak dimension
exactly. This gives a partial-Hadamard characterisation, the sharp
stabilization theorem
\[
\wdim_{2^d-t}(Q_d)=2^d-\lfloor t/2\rfloor
\quad\text{for}\quad d\ge2^{\lfloor t/2\rfloor}+1,
\]
and explicit values through deficit \(15\). A parity argument shows
that consecutive parameters \(2s-1\) and \(2s\) have identical weak
resolving sets throughout the feasible range. The nonbinary case has a different local obstruction:
adjacent pairs bound the symbol frequencies in every coordinate.
A cyclic group action attains those bounds, proving eventually that
\[
\wdim_{2q^{d-1}-t}(K_q^{\square d})
=q^d-q\lfloor t/2\rfloor-(t\bmod2),\qquad q\ge3.
\]

C\'aceres et al.~\cite{CaceresHernandoMoraPelayoPuertasSearaWood2007}
studied ordinary metric dimension in Cartesian products. In hypercubes,
Kelen\v{c}, Masa Toshi, \v{S}krekovski and Yero~\cite{KelencToshiSkrekovskiYero2023}
compared metric, edge metric and mixed metric dimensions.
At \(k=1\), resolving hypercubes is also closely related to separating
systems and coin-weighing~\cite{ErdosRenyi1963,Lindstrom1964}.
Our near-maximum parameter regime leads to binary codes with prescribed
sets of distances. Such restrictions occur in the narrow-distance bounds
of Roth and Seroussi~\cite{RothSeroussi2007} and the few-distance
problems studied by Barg and Musin~\cite{BargMusin2011} and by Barg,
Glazyrin, Kao, Lai, Tseng and Yu~\cite{BargGlazyrinKaoLaiTsengYu2024}.
Here the permitted distances are specified by the deficit, and the code
columns must also distinguish the omitted vertices.

We use standard rank and coding bounds~\cite{MacWilliamsSloane1977},
the Singleton bound~\cite{Singleton1964}, Delsarte's linear programming
method~\cite{Delsarte1973}, and Hadamard
constructions~\cite{Sylvester1867,Paley1933,HedayatWallis1978}.
The additional capacity arguments needed for the exact tables are given
in full. Two capacities use finite exhaustive proofs, specified in the
appendix.
We do not determine the remaining rectangular \(k=2\) values below
the equality threshold, or the full feasible range of \(k\) in higher
dimensions.

\paragraph{Chronology.}
I publicly posted a complete proof of Conjecture~6.1 on
GitHub on 21 August 2026 at 07:00 UTC~\cite{Kumar2026Repository};
GitHub Actions records its processing at 07:01 UTC.
I subsequently became aware of a proof posted by
Patel on MathDB~\cite{MathDB2026Rectangular}, timestamped 22:05 UTC
that day, about fifteen hours after my public posting.
My sharp \(k=2\) equality theorem was also publicly posted on GitHub
at 19:35 UTC that day~\cite{Kumar2026K2}, approximately two and a half
hours before the MathDB posting, which does not contain that theorem.

\paragraph{Organization of the paper.}
Section~\ref{rect:sec:rectangular} proves Conjecture~6.1, establishes the
\(k=2\) equality threshold, and records its consequences for previously
reported values. Section~\ref{sec:binary} develops the hypercube reduction
and its connection to partial Hadamard matrices.
Section~\ref{sec:stabilization} proves parity pairing and sharp stabilization;
Section~\ref{sec:small} determines the small-deficit values, including
\(t=14,15\). Section~\ref{sec:nonbinary} treats nonbinary
Hamming graphs. Section~\ref{sec:future} discusses further coding
questions, and Appendix~\ref{sec:certs} specifies the two finite
capacity verifications.

\section{Rectangular Hamming graphs}\label{rect:sec:rectangular}

Write \(V(K_n\square K_m)=\mathbb Z_n\times\{0,\ldots,m-1\}\).
The first coordinate specifies a row and the second a column. For a
landmark \((a,b)\) and a vertex \((i,j)\),
\[
d((a,b),(i,j))=\mathbf1_{a\ne i}+\mathbf1_{b\ne j}.
\]
For \(m\ge n\ge2\), the largest feasible parameter is \(2n\).

\subsection{Coordinate identities}

To organize the distance calculations used in this section, represent
$S$ by its $n \times m$ binary incidence matrix $A$ and write
\begin{equation*}
  g_i = \sum_{j=0}^{m-1} A_{ij}, \qquad
  h_j = \sum_{i \in \Z_n} A_{ij}, \qquad
  |S| = \sum_i g_i = \sum_j h_j .
\end{equation*}
The following identities are those underlying Proposition~4.2 of
Fern\'andez et al.~\cite{FernandezKlavzarKuziakMunozYero2026}; we derive them directly
for completeness.

\medskip
\noindent\emph{Same-row pairs.} If $x=(i,j)$ and $y=(i,j')$ share a row, the
row indicators in $\Delta_w(x,y)$ cancel, and a landmark contributes $1$
exactly when its column is $j$ or $j'$:
\begin{equation}\label{eq:rect-1}
  \Delta_S(x,y) = h_j + h_{j'}.
\end{equation}

\noindent\emph{Same-column pairs.} Dually, if $x=(i,j)$ and $y=(i',j)$,
\begin{equation}\label{eq:rect-2}
  \Delta_S(x,y) = g_i + g_{i'}.
\end{equation}

\noindent\emph{Non-aligned pairs.} Let $x=(i,j)$, $y=(i',j')$ with $i \ne i'$,
$j \ne j'$. In the four layer sums $h_j + h_{j'} + g_i + g_{i'}$, each
selected endpoint is counted twice, which matches its contribution
$d(x,y)=2$. Every other landmark lying in exactly one of the two rows and two
columns is counted once, which also matches its contribution. The two cross
vertices $(i,j')$ and $(i',j)$ are counted twice but are equidistant from
$x$ and $y$. Subtracting those two excess contributions gives
\begin{equation}\label{eq:rect-3}
  \Delta_S(x,y) = h_j + h_{j'} + g_i + g_{i'} - 2A_{i,j'} - 2A_{i',j}.
\end{equation}

As observed by Fern\'andez et al.\ in
\cite[Proposition~4.1]{FernandezKlavzarKuziakMunozYero2026}, if $k\ge4$ and
all same-row and same-column pairs satisfy the weak $k$ inequalities, then
by \eqref{eq:rect-3} every
non-aligned pair satisfies
\begin{equation}\label{eq:rect-4}
  \Delta_S(x,y) \ge k + k - 2 - 2 = 2k - 4 \ge k.
\end{equation}
For $k=3$, the construction below supplies a stronger row-sum bound.

\subsection{Conjecture 6.1: the case
\texorpdfstring{$3 \le k \le 2n$}{3 <= k <= 2n}}
\label{rect:sec:conjecture}

The following theorem proves Conjecture~6.1 of Fern\'andez, Klav\v{z}ar,
Kuziak, Mu\~noz-M\'arquez and Yero~\cite{FernandezKlavzarKuziakMunozYero2026}.

\begin{theorem}\label{rect:thm:main}
For all $n \ge 3$, $m \ge n+1$, and $3 \le k \le 2n$,
\begin{equation*}
  \wdim_k(K_n \square K_m)=
  \begin{cases}
    m\lceil k/2\rceil, & k \text{ even},\\[1mm]
    m\lceil k/2\rceil - 1, & k \text{ odd}.
  \end{cases}
\end{equation*}
\end{theorem}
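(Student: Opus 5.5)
The plan is a counting lower bound from the same-row identity \eqref{eq:rect-1}, followed by a matching construction in which the landmarks are laid out cyclically so that the row sums are balanced.

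\emph{Lower bound.} Fix a weak $k$-resolving set $S$ with column sums $h_j$. By \eqref{eq:rect-1}, $h_j+h_{j'}\ge k$ for all $j\ne j'$. Let $h=\min_j h_j$, attained at a column $j_0$. Every other column then satisfies $h_{j'}\ge \max(h,k-h)$.
\begin{itemize}
\item If $h\ge\lceil k/2\rceil$, then $|S|\ge m\lceil k/2\rceil$ immediately.
\item If $h<k/2$, then $|S|\ge h+(m-1)(k-h)$. This is decreasing in $h$, so it is minimised at the largest admissible value of $h$.
\item For $k$ even, $h\le k/2-1$, which gives $|S|\ge mk/2+(m-2)>mk/2$.
\item For $k$ odd, $h\le (k-1)/2$, which gives $|S|\ge (k-1)/2+(m-1)(k+1)/2=m\lceil k/2\rceil-1$.
\end{itemize}
In every case $|S|$ is at least the claimed value. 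This step does not use $m>n$.

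\emph{Construction.} Put $c=\lceil k/2\rceil$; since $k\le 2n$ we have $c\le n$. Fix the target column sums: $h_j=c$ for all $j$ when $k$ is even, and for $k$ odd $h_0=c-1$ and $h_j=c$ for $j\ge1$. Now fill the ones column by column, reading the rows in the cyclic order $0,1,\dots,n-1,0,1,\dots$ of $\Z_n$. Column $j$ receives the next $h_j$ rows of this sequence. Because $h_j\le n$, the rows placed in any one column are distinct, so this gives a genuine $0/1$ matrix.

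Since the ones cycle through the rows in order, the row sums differ by at most one. Hence $g_i\ge\lfloor |S|/n\rfloor$ for every row $i$. Here $|S|=mc$ or $mc-1$, and $m\ge n+1$ gives $mc-1\ge nc+c-1\ge nc$. Therefore $g_i\ge c$ for all $i$, and this is the only place the hypothesis $m>n$ enters.

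\emph{Checking the three pair types.}
\begin{itemize}
\item Same-column pairs: by \eqref{eq:rect-2}, $\Delta_S=g_i+g_{i'}\ge 2c\ge k$.
\item Same-row pairs: by \eqref{eq:rect-1}, $\Delta_S=h_j+h_{j'}$. This is at least $2c=k$ for $k$ even, and at least $(c-1)+c=k$ for $k$ odd.
\item Non-aligned pairs with $k\ge4$: this follows from \eqref{eq:rect-4}.
\end{itemize}

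The remaining case, and the only genuinely delicate step, is $k=3$, where \eqref{eq:rect-4} fails. Here I will use the row-sum bound directly in \eqref{eq:rect-3}. We have $g_i+g_{i'}\ge 2c=4$ and $h_j+h_{j'}\ge3$, while the subtracted cross term is at most $4$. So $\Delta_S\ge 3+4-4=3$.

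The main point to verify carefully is the balancing claim: the cyclic filling must keep every row sum at least $c$ even with the one short column in the odd case. The bound $\lfloor(mc-1)/n\rfloor\ge c$ above handles exactly this.
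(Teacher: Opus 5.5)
Your proposal is correct and follows essentially the paper's route: the same minimum-column lower bound, then a cyclic construction checked against the same-row, same-column and non-aligned identities. Your sequential wrap-around fill, with the short column first and row sums bounded by $\lfloor (mc-1)/n\rfloor\ge c$, is only a cosmetic variant of the paper's band $i-j\in\{0,\dots,r-1\}$ with the cell $(0,n)$ deleted in the odd case.
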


\subsubsection{Lower bound}

\begin{lemma}\label{rect:lem:main-lower}
Under the hypotheses of Theorem~\ref{rect:thm:main}, every weak $k$-resolving set
has at least $m\lceil k/2\rceil$ vertices when $k$ is even and at least
$m\lceil k/2\rceil-1$ vertices when $k$ is odd.
\end{lemma}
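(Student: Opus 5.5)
The plan is to work only with the same-row identity \eqref{eq:rect-1}. It says that every pair of distinct columns $j\ne j'$ must satisfy $h_j+h_{j'}\ge k$. Since $m\ge n+1\ge 2$, any two columns of $K_n\square K_m$ can be placed in a common row, so this constraint binds for \emph{every} pair of columns. The lower bound should then follow by minimizing $\sum_j h_j$ subject to the pairwise constraint $h_j+h_{j'}\ge k$ over all $j\ne j'$.

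First take $k=2r$ even. Suppose $\sum_j h_j < mr$. Then at least one column has $h_j\le r-1$. Call such a column \emph{deficient}, and say a column with $h_j\le r-1$ has deficit $r-h_j\ge 1$.

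Two deficient columns would have $h_j+h_{j'}\le 2r-2<k$, so at most one column is deficient. Let $j_0$ be that column, with $h_{j_0}=r-e$ and $e\ge1$. Every other column then satisfies $h_{j'}\ge k-h_{j_0}=r+e$. This gives
\[
\sum_j h_j\ge (r-e)+(m-1)(r+e)=mr+(m-2)e\ge mr,
\]
using $m\ge2$, which contradicts the assumption. Hence $|S|\ge mr=m\lceil k/2\rceil$.

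Now take $k=2r-1$ odd, so $\lceil k/2\rceil=r$. A column with $h_j\le r-2$ forces every other column to have $h_{j'}\ge r+1$. A column with $h_j=r-1$ forces every other column to have $h_{j'}\ge r$. Two columns each with value $\le r-1$ give a sum $\le 2r-2<k$, so at most one column lies below $r$.

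If no column lies below $r$, then $\sum_j h_j\ge mr$. Otherwise let the low column have $h_{j_0}=r-e$ with $e\ge1$. Then every other column has $h_{j'}\ge r-1+e$, and
\[
\sum_j h_j\ge (r-e)+(m-1)(r-1+e)=mr-1+(m-2)(e-1)+ (e-1)\ge mr-1.
\]
More carefully, the total is $mr-(m-1)+(m-2)e+\dots$; I would simply expand it to $mr-m+1+(m-2)e+\dots$ and check that $e=1$ gives exactly $mr-1$, with larger $e$ only increasing the sum since $m\ge 3$. This yields $|S|\ge mr-1$.

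Throughout, we also need $h_j\le n$. That constraint only restricts feasibility and cannot lower the bound, so it plays no role here. Most of the hypotheses ($n\ge3$, $k\le 2n$) are needed only for the matching construction, not for the lower bound. I expect no real obstacle; the only point needing care is the odd-case bookkeeping, which confirms that the single exceptional column can save exactly one vertex and no more.
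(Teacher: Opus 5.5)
Your proof is correct and is essentially the paper's argument: you pair the unique low column with every other column via \eqref{eq:rect-5} and sum, and your deficit $e$ is the paper's $r-a$ with $a=\min_j h_j$. One small slip: your first odd-case display should read $(r-e)+(m-1)(r-1+e)=mr-1+(m-2)(e-1)$, without the extra $+(e-1)$. Your subsequent expansion $mr-m+1+(m-2)e$ is exact and gives the conclusion $|S|\ge mr-1$.
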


\begin{proof}

By \eqref{eq:rect-1}, every weak $k$-resolving set satisfies
\begin{equation}\label{eq:rect-5}
  h_j + h_{j'} \ge k \qquad (j \ne j').
\end{equation}
Let $a = \min_j h_j$.

\medskip
\noindent\emph{Even case, $k = 2r$.} If $a \ge r$ then
$|S| = \sum_j h_j \ge mr$. If $a \le r-1$, pairing a minimum column with
every other column in \eqref{eq:rect-5} gives
\begin{equation*}
  |S| \ge a + (m-1)(2r - a) = mr + (m-2)(r-a) \ge mr,
\end{equation*}
using $m \ge n+1 \ge 4$ and $r - a \ge 1$. Hence $|S| \ge mr$.

\medskip
\noindent\emph{Odd case, $k = 2r-1$.} If $a \ge r$ then $|S| \ge mr$, stronger
than needed. If $a \le r-1$, \eqref{eq:rect-5} gives
\begin{equation*}
  |S| \ge a + (m-1)(2r - 1 - a).
\end{equation*}
Increasing $a$ changes the right-hand side by $2-m$, which is negative because
$m\ge4$. Its minimum over integers $a \le r-1$ is therefore attained at
$a = r-1$:
\begin{equation*}
  |S| \ge (r-1) + (m-1)r = mr - 1.
\end{equation*}
In both parities every weak $k$-resolving set has at least the conjectured
number of vertices.
\end{proof}

\subsubsection{Cyclic construction}

\begin{lemma}\label{rect:lem:main-upper}
Under the hypotheses of Theorem~\ref{rect:thm:main}, there is a weak $k$-resolving
set of size $m\lceil k/2\rceil$ when $k$ is even and of size
$m\lceil k/2\rceil-1$ when $k$ is odd.
\end{lemma}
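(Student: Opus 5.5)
The plan is to take every column sum equal to $r=\lceil k/2\rceil$ (with one deficient column when $k$ is odd) and to place the entries cyclically, so that the row sums are as balanced as possible. The identities \eqref{eq:rect-1}--\eqref{eq:rect-3} then reduce the weak $k$-inequalities to simple bounds on the $h_j$ and $g_i$.

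\textbf{Construction.} Put $r=\lceil k/2\rceil$, so $k\in\{2r-1,2r\}$. The hypothesis $k\le 2n$ gives $r\le n$. Define the $n\times m$ matrix $A$ by letting column $j$ contain the $r$ rows
\[
j,\ j+1,\ \ldots,\ j+r-1 \pmod n .
\]
Since $r\le n$, these rows are distinct, so $h_j=r$ for every $j$ and $|S|=mr$. For odd $k$, delete one selected entry lying in a row of maximum row sum; the result has size $mr-1$.

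\textbf{Row sums.} The windows are consecutive cyclic intervals placed at consecutive starting points. Over $m$ columns each row is therefore covered either $\lfloor mr/n\rfloor$ or $\lceil mr/n\rceil$ times. This follows by splitting the columns into $\lfloor m/n\rfloor$ full cycles, each covering every row exactly $r$ times, plus a final partial run of fewer than $n$ consecutive windows, which covers each row at most $r$ times with counts differing by at most one. Deleting an entry from a row of maximum sum, in the odd case, keeps every row sum at least $\lfloor (mr-1)/n\rfloor$. Since $m\ge n+1$, we have $mr-1\ge nr+r-1\ge nr$, so every row sum satisfies $g_i\ge r$ in both parities.

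\textbf{Verification.}
\begin{itemize}
\item Same-row pairs: by \eqref{eq:rect-1}, $h_j+h_{j'}\ge 2r$ when $k$ is even. When $k$ is odd, at most one column has sum $r-1$, so $h_j+h_{j'}\ge 2r-1$. In both cases this is at least $k$.
\item Same-column pairs: by \eqref{eq:rect-2}, $g_i+g_{i'}\ge 2r\ge k$.
\item Non-aligned pairs with $k\ge4$: these follow from \eqref{eq:rect-4}.
\item Non-aligned pairs with $k=3$: here $r=2$. We need the stronger bound $g_i\ge2$, which holds by the row-sum argument above, and which the paper's remark after \eqref{eq:rect-4} anticipates. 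Then \eqref{eq:rect-3} gives
\[
\Delta_S(x,y)\ \ge\ 3+4-2-2\ =\ 3 .
\]
\end{itemize}

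The main obstacle is the balancing claim for the cyclic row sums, in particular the careful count for the final partial cycle, together with keeping $g_i\ge r$ after the deletion in the odd case. Everything else is direct substitution into the identities.
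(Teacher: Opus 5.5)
\textbf{Gap in the row-sum step.} Your construction and verification are the paper's: the same cyclic windows of length $r$, the same three pair types, and the same separate computation for $k=3$. The gap is the balancing claim you identify as the main obstacle, which is false.

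A partial run of $s<n$ consecutive windows does not cover the rows within one of each other. Row $i$ is hit once for each start in $\{0,\dots,s-1\}\cap\{i-r+1,\dots,i\}$ (mod $n$), which gives a tent-shaped profile rather than a balanced one.
\begin{itemize}
\item For $n=4$, $m=6$, $r=2$, the row sums are $(3,4,3,2)$, although $mr/n=3$.
\item For $n=10$, $m=15$, $r=5$, they are $(6,7,8,9,10,9,8,7,6,5)$. One row has sum $5<\lfloor (mr-1)/n\rfloor=7$, and deleting an entry from the maximum row does not change this.
\end{itemize}
So neither $g_i\in\{\lfloor mr/n\rfloor,\lceil mr/n\rceil\}$ nor the derived bound $g_i\ge\lfloor (mr-1)/n\rfloor$ holds in general.

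\textbf{Repair.} The bound you actually use, $g_i\ge r$, is true and needs no balancing.
\begin{itemize}
\item The first $n$ columns form one full cycle, so every row already has exactly $r$ entries there. This is the first half of your own argument, and it settles the even case.
\item In the odd case, a row of maximum sum has at least $\lceil mr/n\rceil\ge r+1$ entries because $m>n$. Deleting one entry from it therefore leaves at least $r$, so your choice of deleted vertex is fine.
\item Alternatively, as the paper does, delete $(0,n)$. It lies in column $n$, outside the full cycle, so no row loses any of its $r$ entries from columns $0,\dots,n-1$.
\end{itemize}
With this replacement the rest of your proof is correct and coincides with the paper's. That rest is: all column sums equal $r$ except one equal to $r-1$, \eqref{eq:rect-4} handles $k\ge4$, and $3+4-2-2=3$ handles $k=3$.
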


\begin{proof}

Put $r = \lceil k/2 \rceil$; the assumption $k \le 2n$ gives $r \le n$.
Define
\begin{equation}\label{eq:rect-6}
  S_0 = \{(i,j) \in \Z_n \times \{0,\dots,m-1\} :
          i - j \pmod n \in \{0,1,\dots,r-1\}\}.
\end{equation}
Each column of $S_0$ contains exactly $r$ vertices.  The rows
$j,j+1,\dots,j+r-1$ are distinct modulo $n$ because $r\le n$.  Among the
first $n$ columns, each row occurs exactly $r$ times: row $i$ occurs in the
columns $j\equiv i-t\pmod n$ for $t=0,\dots,r-1$. These are $r$ distinct
residues modulo $n$, each represented exactly once among columns
$0,\dots,n-1$. Since $m \ge n+1$, every row of $S_0$ contains at least $r$
vertices in total.

\medskip
\noindent\emph{Even case, $k = 2r$.} Take $S = S_0$. Every pair of column
sums equals $2r = k$ and every pair of row sums is at least $2r = k$. Even
$k$ in the stated range has $k \ge 4$, so \eqref{eq:rect-4} handles all non-aligned pairs.
Thus $S$ is weak $k$-resolving with $|S| = mr$.

\medskip
\noindent\emph{Odd case, $k = 2r-1\ge3$.} Here $r\ge2$. Column $n$ exists because
$m \ge n+1$, and it repeats the cyclic pattern of column $0$, since
$\{n+t \bmod n : 0 \le t < r\} = \{0,\dots,r-1\}$; in particular
$(0,n) \in S_0$. Delete it:
\begin{equation}\label{eq:rect-7}
  S = S_0 \setminus \{(0,n)\}.
\end{equation}
Column $n$ now has size $r-1$ and every other column has size $r$, so every
pair of columns has combined size at least $2r-1 = k$. Every row other than
row $0$ still has at least $r$ vertices, and row $0$ retains its $r$ vertices
among the first $n$ columns. The deleted vertex lies in column $n$, outside
those first $n$ columns. Thus every pair of row sums is at least $2r = k+1$.
For a non-aligned pair, \eqref{eq:rect-3} yields
\begin{equation*}
  \Delta_S(x,y) \ge (2r-1)+2r-4=4r-5\ge2r-1=k,
\end{equation*}
since $r\ge2$. This includes $k=3$. Thus $S$ is weak $k$-resolving with
$|S|=mr-1$.
\end{proof}

\begin{proof}[Proof of Theorem~\ref{rect:thm:main}]
Lemma~\ref{rect:lem:main-lower} gives the stated lower bound in each parity case,
and Lemma~\ref{rect:lem:main-upper} gives a weak $k$-resolving set attaining that
bound.  Therefore the two bounds are equal.
\end{proof}

\subsection{The case \texorpdfstring{$k=2$}{k=2}: the exact threshold}
\label{rect:sec:k2}

For $k=2$ the proposed formula reads $\wdim_2(K_n \square K_m) = m$.
Fern\'andez et al.~\cite{FernandezKlavzarKuziakMunozYero2026} propose
this equality for $m\ge2n$.
We determine the sharp threshold $2n-2$.

\begin{lemma}\label{rect:lem:lb}
Let $n \ge 2$ and $m \ge 2$. Every weak $2$-resolving set $S$ of
$K_n \square K_m$ satisfies $|S| \ge m$.
\end{lemma}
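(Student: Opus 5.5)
The plan is to use only the same-row identity \eqref{eq:rect-1}, exactly as in the proof of Lemma~\ref{rect:lem:main-lower}, but now with $k=2$ and without any hypothesis relating $n$ and $m$. For any two distinct columns $j\ne j'$, take a row $i$. The vertices $x=(i,j)$ and $y=(i,j')$ are distinct and lie in the same row. Weak $2$-resolvability together with \eqref{eq:rect-1} therefore gives
\begin{equation*}
  h_j+h_{j'}\ge 2\qquad(j\ne j').
\end{equation*}
This uses only $n\ge1$ (a row exists) and $m\ge2$ (two distinct columns exist). We never need the column-pair inequality \eqref{eq:rect-2} or the non-aligned identity \eqref{eq:rect-3}: we only want a lower bound, and the same-row constraints already force it.

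Next, let $a=\min_j h_j$ and split into two cases.
\begin{itemize}[leftmargin=*]
  \item If $a\ge1$, then every column sum is at least $1$, so $|S|=\sum_j h_j\ge m$.
  \item If $a=0$, fix a column $j_0$ with $h_{j_0}=0$. Pairing it with each of the other $m-1$ columns in the displayed inequality forces $h_j\ge2$ for all $j\ne j_0$. Hence $|S|\ge 2(m-1)=m+(m-2)\ge m$, because $m\ge2$.
\end{itemize}
In both cases $|S|\ge m$.

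This is a direct specialization of the even case $k=2r$ with $r=1$ in Lemma~\ref{rect:lem:main-lower}. The only point needing care is that there we used $m\ge4$ to absorb the term $(m-2)(r-a)$. Here $r-a\le1$, and the nonnegativity of $m-2$ already suffices, so the weaker hypothesis $m\ge2$ is enough. I do not expect any real obstacle; the main thing to check is that the boundary case $m=2$ with an empty column still gives $2(m-1)=m$.
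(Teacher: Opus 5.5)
Your proposal is correct and is essentially the paper's proof: the same-row identity \eqref{eq:rect-1} gives $h_j+h_{j'}\ge2$, and the case split on $a=\min_j h_j$ yields $|S|\ge m$ or $|S|\ge2(m-1)\ge m$. One small correction to your aside: the even case of Lemma~\ref{rect:lem:main-lower} also only needs $m\ge2$ to make $(m-2)(r-a)\ge0$, so the restriction to $r-a\le1$ is not what lets the hypothesis weaken here.
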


\begin{proof}
Fix any row $i$. For distinct columns $j \ne j'$ the pair $(i,j),(i,j')$ and
identity \eqref{eq:rect-1} force $h_j + h_{j'} \ge 2$. Let $a = \min_j h_j$. If $a \ge 1$
then $|S| = \sum_j h_j \ge m$. If $a = 0$, pick $j_0$ with $h_{j_0} = 0$;
then $h_j \ge 2$ for all $j \ne j_0$, so $|S| \ge 2(m-1) \ge m$ because
$m \ge 2$.
\end{proof}

\begin{lemma}\label{rect:lem:trans}
Let $m \ge 3$. If $S$ is weak $2$-resolving and $|S| = m$, then $h_j = 1$ for
every column $j$; that is, $S$ has exactly one vertex per column, so
$S = \{(f(j), j) : 0 \le j < m\}$ for a unique map
$f : \{0,\dots,m-1\} \to \Z_n$.
\end{lemma}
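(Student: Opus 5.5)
The plan is to reuse the case split from the proof of Lemma~\ref{rect:lem:lb}, this time tracking when equality $|S|=m$ can occur. Set $a=\min_j h_j$. By \eqref{eq:rect-1}, applied to two vertices in a common row, we have $h_j+h_{j'}\ge 2$ for all distinct columns $j\ne j'$.

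First suppose $a=0$. Choose $j_0$ with $h_{j_0}=0$. Every other column then satisfies $h_j\ge 2$, so
\[
|S|\ge 2(m-1).
\]
Since $m\ge 3$, we have $2(m-1)=m+(m-2)>m$. This contradicts $|S|=m$, so the case $a=0$ cannot occur. This is exactly where the hypothesis $m\ge3$ is needed: when $m=2$, a single column with two landmarks would also give $|S|=m$.

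It follows that $a\ge 1$, that is, $h_j\ge1$ for every column $j$. Summing, we get
\[
m=|S|=\sum_{j=0}^{m-1}h_j\ge m,
\]
and equality forces every term to be minimal, so $h_j=1$ for all $j$.

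Each column therefore contains exactly one landmark. Let $f(j)\in\Z_n$ denote its row. Then $S=\{(f(j),j)\}$, and $f$ is uniquely determined because each column contains exactly one element of $S$.

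None of these steps is a real obstacle. The only point needing care is the strict inequality $2(m-1)>m$ in the first case, which relies on $m\ge3$.
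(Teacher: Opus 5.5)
Your proposal is correct and follows the paper's proof essentially verbatim: rule out an empty column via $h_j+h_{j'}\ge2$ giving $|S|\ge 2(m-1)>m$ when $m\ge3$, then conclude from $\sum_j h_j=m$ with every $h_j\ge1$ that each $h_j=1$.
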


\begin{proof}
If some $h_{j_0} = 0$, the argument of Lemma~\ref{rect:lem:lb} gives
$|S| \ge 2(m-1) > m$, where the strict inequality follows from $m \ge 3$.
This is a contradiction. Hence
$h_j \ge 1$ for all $j$, and $\sum_j h_j = m$ forces $h_j = 1$ for every $j$.
\end{proof}

Such a set is called a \emph{transversal}; its row sums are
$g_i = |f^{-1}(i)|$.

\begin{lemma}\label{rect:lem:crit}
A transversal $S_f$ is weak $2$-resolving if and only if its row sums satisfy
\begin{align*}
  \text{\emph{(R1)}}\quad & g_i + g_{i'} \ge 2
    \quad\text{for all distinct rows } i,i';\\
  \text{\emph{(R2)}}\quad & g_i + g_{i'} \ge 4
    \quad\text{for all distinct rows } i,i' \text{ in the image of } f.
\end{align*}
\end{lemma}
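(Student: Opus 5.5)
We check the three pair types using the identities \eqref{eq:rect-1}--\eqref{eq:rect-3}, with $h_j=1$ for every column. Because $S_f$ is a transversal, $A_{ij}=\ind_{f(j)=i}$ and $g_i=|f^{-1}(i)|$.

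\emph{Same-row pairs.} By \eqref{eq:rect-1}, $\Delta_S=h_j+h_{j'}=2$. Every transversal therefore resolves same-row pairs with margin exactly $2$, and this type imposes no condition.

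\emph{Same-column pairs.} By \eqref{eq:rect-2}, $\Delta_S=g_i+g_{i'}$. Every pair of distinct rows occurs as a same-column pair, so this type is equivalent to (R1). Hence (R1) is necessary, and it is sufficient for this type.

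\emph{Non-aligned pairs.} For $x=(i,j)$, $y=(i',j')$ with $i\ne i'$ and $j\ne j'$, \eqref{eq:rect-3} gives
\[
\Delta_S(x,y)=2+g_i+g_{i'}-2\ind_{f(j')=i}-2\ind_{f(j)=i'}.
\]
The requirement $\Delta_S\ge2$ thus becomes $g_i+g_{i'}\ge 2\ind_{f(j')=i}+2\ind_{f(j)=i'}$. We split according to how many cross indicators equal $1$.
\begin{itemize}
\item If both indicators are $0$, the inequality holds trivially.
\item If exactly one equals $1$, say $f(j')=i$, we need $g_i+g_{i'}\ge2$, which is (R1).
\item If both equal $1$, we need $g_i+g_{i'}\ge4$.
\end{itemize}

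The step needing care is to show that the last case occurs exactly for pairs of distinct rows both in the image of $f$.
\begin{itemize}
\item \emph{Forward direction.} If $f(j')=i$ and $f(j)=i'$, then $i$ and $i'$ are distinct image rows. Condition (R2) then gives the bound.
\item \emph{Converse.} Given distinct $i,i'\in f(\{0,\dots,m-1\})$, pick $j\in f^{-1}(i')$ and $j'\in f^{-1}(i)$. These satisfy $j\ne j'$ because $f$ is a function and $i\ne i'$. Then $x=(i,j)$ and $y=(i',j')$ form a non-aligned pair with both cross indicators equal to $1$. The weak $2$-resolving property forces $g_i+g_{i'}\ge4$, which proves (R2) is necessary.
\end{itemize}

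Combining the three pair types, $S_f$ is weak $2$-resolving if and only if (R1) and (R2) hold.
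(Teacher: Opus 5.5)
Your proof is correct and follows the paper's argument essentially step for step. You use the same split by pair type, the same specialization of \eqref{eq:rect-3} with $h\equiv1$, and the same choice $j\in f^{-1}(i')$, $j'\in f^{-1}(i)$ to show that the doubly-indicated non-aligned pairs correspond exactly to the image-row pairs in (R2); your observation that the zero-indicator case holds trivially, rather than ``by (R1)'' as the paper says, is a harmless sharpening.
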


\begin{proof}
We check the three pair types. Same-row pairs have
$\Delta = h_j + h_{j'} = 2$ identically. Same-column pairs give, by \eqref{eq:rect-2},
$\Delta((i,j),(i',j)) = g_i + g_{i'}$.  Every pair of distinct rows occurs
in each column, so these constraints are exactly (R1). For a non-aligned pair
$x=(i,j)$, $y=(i',j')$, identity \eqref{eq:rect-3} with $h \equiv 1$ reads
\begin{equation*}
  \Delta_S(x,y) = 2 + g_i + g_{i'}
    - 2\ind[f(j')=i] - 2\ind[f(j)=i'].
\end{equation*}
If both indicators equal $1$, then $i = f(j')$ and $i' = f(j)$ are distinct
rows in the image of $f$, and $\Delta = g_i + g_{i'} - 2$, which is $\ge 2$
iff $g_i + g_{i'} \ge 4$. Conversely, for any two distinct image rows $i,i'$,
choosing $j' \in f^{-1}(i)$ and $j \in f^{-1}(i')$ produces such a pair
$j \ne j'$ because $f(j) = i' \ne i = f(j')$.  Thus this family of
constraints is exactly (R2). If exactly one indicator equals $1$, then
$\Delta = g_i + g_{i'} \ge 2$ by (R1). If neither does, then
$\Delta = 2 + g_i + g_{i'} \ge 2$ by (R1). Since every pair of distinct
vertices is of exactly one type, (R1) and (R2) together are necessary and
sufficient.
\end{proof}

\begin{lemma}\label{rect:lem:constr}
Let $n \ge 3$ and $m \ge 2n-2$. Then a transversal satisfying \emph{(R1)} and
\emph{(R2)} exists; hence $\wdim_2(K_n \square K_m) \le m$.
\end{lemma}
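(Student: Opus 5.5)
We construct the map $f$ directly through its fibre sizes, since by Lemma~\ref{rect:lem:crit} only the row sums $g_i=|f^{-1}(i)|$ matter. The conditions then become purely numerical.
Condition (R1) says that at most one row has $g_i=0$, and that if some row has $g_i=0$ then every other row has $g_i\ge2$.
Condition (R2) says that among the rows in the image, at most one has $g_i=1$, and if such a row exists then all other image rows have $g_i\ge3$.

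The natural choice is to leave one row empty, say $g_0=0$. Then (R1) forces $g_i\ge2$ for every $i\ne0$. In that case (R2) holds automatically, because any two image rows have $g_i+g_{i'}\ge4$. This requires $m=\sum_i g_i\ge2(n-1)=2n-2$, which is exactly the hypothesis.

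The construction is then explicit. Set $g_1=m-2(n-2)\ge2$ and $g_i=2$ for $2\le i\le n-1$, so that $\sum_i g_i=m$. Define $f$ by assigning, in order, $g_i$ consecutive columns to each row $i\ne 0$. This gives a transversal $S_f=\{(f(j),j)\}$ of size $m$.

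To finish, we check the two conditions. For (R1): any pair of rows involving row $0$ has sum $g_{i'}\ge2$, and any other pair has sum at least $4$. For (R2): all image rows have $g_i\ge2$, so every pair of image rows sums to at least $4$. Lemma~\ref{rect:lem:crit} then gives that $S_f$ is weak $2$-resolving, and hence $\wdim_2(K_n\square K_m)\le m$.

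There is no real obstacle here. The only care needed is the bookkeeping that $g_1\ge2$ precisely when $m\ge2n-2$, and that $n\ge3$ keeps the fibre definitions consistent: it guarantees at least one nonempty row besides row $0$. The hypothesis $m\ge3$, needed so that Lemma~\ref{rect:lem:crit} applies to transversals, holds because $m\ge2n-2\ge4$.
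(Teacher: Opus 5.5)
Your proof is correct and is essentially the paper's: leave row $0$ empty and give every other row at least two landmarks, which the paper does cyclically with $f(j)=1+(j\bmod(n-1))$ instead of your explicit fibre sizes. One harmless slip: Lemma~\ref{rect:lem:crit} holds for transversals with no hypothesis on $m$, and it is Lemma~\ref{rect:lem:trans} that uses $m\ge3$.
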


\begin{proof}
Put $f(j)=1+(j\bmod(n-1))$ for $0\le j<m$. Row $0$ is empty, and every
other row has at least $\lfloor m/(n-1)\rfloor\ge2$ landmarks. A pair
containing row $0$ has combined size at least $2$, and any two occupied
rows have combined size at least $4$. Thus (R1) and (R2) hold, and
Lemma~\ref{rect:lem:crit} gives a weak $2$-resolving set of size $m$.
\end{proof}

\begin{remark}
The diagonal $f(j)=j\bmod n$ does not work at $m=2n-1$: its row sums are
$(2,\dots,2,1)$, and (R2) fails for a degree-$2$ row paired with the
degree-$1$ row. Concretely, at $(n,m) = (3,5)$ the pair $x=(0,2)$,
$y=(2,0)$ has $\Delta_S(x,y) = 1 < 2$, since both cross cells $(0,0)$ and
$(2,2)$ are landmarks. For $n\ge3$ and $m\ge3$, the same computation shows the diagonal is weak
$2$-resolving \emph{iff} $m \ge 2n$; the formula itself already starts at
$m=2n-2$ via the construction above.
\end{remark}

\begin{lemma}\label{rect:lem:sharp}
Let $n \ge 3$ and $3 \le m \le 2n-3$. Then
\begin{equation*}
  \wdim_2(K_n \square K_m) \ge m+1.
\end{equation*}
\end{lemma}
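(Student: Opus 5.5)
By Lemma~\ref{rect:lem:lb} every weak $2$-resolving set has at least $m$ vertices, so it suffices to rule out a weak $2$-resolving set of size exactly $m$. Since $m\ge3$, Lemma~\ref{rect:lem:trans} shows that such a set is a transversal $S_f$. By Lemma~\ref{rect:lem:crit}, its row sums $g_i=|f^{-1}(i)|$ then satisfy (R1) and (R2). The plan is to prove that (R1) and (R2) together force $m\ge 2n-2$, which contradicts $m\le 2n-3$.

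The counting runs as follows. Let $I$ be the image of $f$, let $p=|I|\ge1$, and let $z=n-p$ be the number of empty rows. Then $\sum_{i\in I} g_i=m$ and $g_i\ge1$ for $i\in I$. First, (R1) applied to two empty rows gives $0\ge2$, which is false, so $z\le1$. Next, we bound $m$ from below in each case.

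\emph{Case $z=1$.} An empty row paired with any $i\in I$ forces $g_i\ge2$ by (R1). Hence $m\ge 2p=2(n-1)=2n-2$.

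\emph{Case $z=0$.} Here $I$ consists of all $n\ge3$ rows. Condition (R2) says that any two rows have combined size at least $4$. Let $a=\min_i g_i$. If $a\ge2$, then $m\ge 2n$. If $a=1$, every other row has $g_i\ge3$, so $m\ge 1+3(n-1)=3n-2\ge 2n-2$. (The value $a=0$ is impossible since $z=0$.)

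In every case $m\ge 2n-2$, which contradicts $m\le 2n-3$. Therefore no transversal satisfies (R1) and (R2), no weak $2$-resolving set has size $m$, and $\wdim_2(K_n\square K_m)\ge m+1$. The only step needing care is the use of Lemma~\ref{rect:lem:trans}, which requires $m\ge3$; this is part of the hypothesis. Once that is in place, the case split on the number of empty rows is elementary, and I expect no real obstacle beyond checking that $z\ge2$ is excluded by (R1).
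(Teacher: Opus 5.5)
Your proposal is correct and follows the paper's proof essentially step for step. Lemmas~\ref{rect:lem:lb}, \ref{rect:lem:trans} and~\ref{rect:lem:crit} reduce the problem to a transversal satisfying (R1) and (R2). You then split on whether some row is empty: with an empty row, (R1) gives $m\ge 2(n-1)$; without one, (R2) gives $m\ge 3n-2$ or $m\ge 2n$.
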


\begin{proof}
Suppose a weak $2$-resolving set $S$ with $|S| = m$ exists. Since $m \ge 3$,
Lemma~\ref{rect:lem:trans} makes $S$ a transversal, with row-sum vector $g$,
$\sum_i g_i = m \le 2n-3$, satisfying (R1) and (R2).

\emph{Case 1: some row is empty.} By (R1) there is exactly one empty row and
every other row has $g_i \ge 2$, so $\sum_i g_i \ge 2(n-1) = 2n-2 > m$, a
contradiction.

\emph{Case 2: no empty row.} Then the image of $f$ is all $n$ rows and (R2)
applies to every pair of distinct rows. Two distinct rows of weight $1$ would
violate (R2) since $1+1 = 2 < 4$, so at most one row has weight $1$. If one
row has weight $1$, (R2) forces every other row to have weight at least $3$,
whence $\sum_i g_i \ge 1 + 3(n-1) = 3n - 2 > 2n - 3 \ge m$, a
contradiction. Otherwise every row has weight at least $2$, so
$\sum_i g_i \ge 2n > m$, again a contradiction.

Hence no size-$m$ weak $2$-resolving set exists, and Lemma~\ref{rect:lem:lb} gives
$\wdim_2 \ge m+1$.
\end{proof}

\Needspace{8\baselineskip}
\begin{theorem}\label{rect:thm:k2}
For every $n \ge 3$:
\begin{enumerate}
  \item $\wdim_2(K_n \square K_m) = m$ for every $m \ge 2n-2$;
  \item $\wdim_2(K_n \square K_m) \ge m+1$ for every $3 \le m \le 2n-3$, so
        the formula $\wdim_2 = m$ fails there and the threshold $2n-2$ is
        sharp.
\end{enumerate}
\end{theorem}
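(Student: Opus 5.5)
The theorem is an assembly of the lemmas proved just above, so the plan is to combine them and check that their hypotheses cover exactly the stated ranges.

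\emph{Part 1.} Fix $n\ge3$ and $m\ge2n-2$; note $m\ge4\ge2$. Lemma~\ref{rect:lem:lb} gives $\wdim_2(K_n\square K_m)\ge m$. For the matching upper bound, Lemma~\ref{rect:lem:constr} supplies the transversal $f(j)=1+(j\bmod(n-1))$. It leaves row $0$ empty and puts at least $\lfloor m/(n-1)\rfloor\ge2$ landmarks in every other row. Hence it satisfies (R1) and (R2), and by Lemma~\ref{rect:lem:crit} it is weak $2$-resolving of size $m$. Together these give equality. One point to confirm is that Lemma~\ref{rect:lem:crit} needs no hypothesis beyond $S$ being a transversal. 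This holds, since its proof only uses the identities \eqref{eq:rect-1}--\eqref{eq:rect-3}.

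\emph{Part 2.} Fix $3\le m\le2n-3$; this range is nonempty because $n\ge3$. Lemma~\ref{rect:lem:sharp} gives $\wdim_2\ge m+1$ directly. Its proof is the only substantive step, and I would recheck it as follows.
\begin{itemize}[leftmargin=*]
\item Since $m\ge3$, Lemma~\ref{rect:lem:trans} forces a minimum set of size $m$ to be a transversal.
\item If $f$ misses a row, (R1) forces exactly one empty row and weight at least $2$ on the others. This gives at least $2n-2>m$ landmarks, a contradiction.
\item If $f$ is onto, (R2) applies to all pairs of rows. Then either every row has weight at least $2$, giving total at least $2n$, or exactly one row has weight $1$ and the rest have weight at least $3$, giving total at least $3n-2$. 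Both exceed $m$.
\end{itemize}
So $\wdim_2\ne m$ throughout this range, and the formula $\wdim_2=m$ fails there.

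\emph{Sharpness.} The formula holds at $m=2n-2$ and fails at $m=2n-3$, so $2n-2$ is the least value at which it begins to hold. For $m\ge2n-2$ it always holds by Part 1. The cases $m\le2$ lie outside the statement and need no treatment.

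I expect no real obstacle, since everything reduces to earlier lemmas. The step deserving the most care is the empty-row case, where (R1) must also rule out two empty rows: their pair has combined weight $0<2$.
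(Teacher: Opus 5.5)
Your proposal is correct and follows the paper's proof exactly. Part~1 combines the construction of Lemma~\ref{rect:lem:constr} (verified through Lemma~\ref{rect:lem:crit}) with the lower bound of Lemma~\ref{rect:lem:lb}, and Part~2 is Lemma~\ref{rect:lem:sharp}, whose empty-row and surjective case analysis you reproduce faithfully.
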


\begin{proof}
(1) Lemma~\ref{rect:lem:constr} gives $\wdim_2 \le m$, and Lemma~\ref{rect:lem:lb}
gives $\wdim_2 \ge m$ because $m \ge 2n-2 \ge 4$. Part (2) is
Lemma~\ref{rect:lem:sharp}.
\end{proof}

\subsection{Consequences for previously reported values}
\label{rect:sec:comparison}

Table~1 of Fern\'andez et al.~\cite{FernandezKlavzarKuziakMunozYero2026} reports
$\wdim_2(K_5 \square K_8) = 9$ and $\wdim_2(K_5 \square K_9) = 10$, whereas
the true values are $8$ and $9$.  Lemma~\ref{rect:lem:constr} gives weak
$2$-resolving transversals with row-sum profiles $(0,2,2,2,2)$ and
$(0,3,2,2,2)$, respectively, and Lemma~\ref{rect:lem:lb} supplies the matching
lower bound.

Table~4 of the same paper reports $\wdim_2(K_8\square K_{15})=16$, whereas the true
value is $15$. Lemma~\ref{rect:lem:constr} gives a weak $2$-resolving transversal
with row-sum profile $(0,3,2,2,2,2,2,2)$, and Lemma~\ref{rect:lem:lb} proves
optimality.

The remark after their Conjecture~6.1 states that the equality
$\wdim_2=m$ fails for $m\in\{6,\dots,9\}$ when $n=5$, and for
$m\in\{7,\dots,11\}$ when $n=6$. Within $m\ge n+1$,
Theorem~\ref{rect:thm:k2} gives the failure sets $\{6,7\}$ and $\{7,8,9\}$,
respectively. Table~2 already gives the correct values $10$ and $11$ for
$K_6\square K_{10}$ and $K_6\square K_{11}$, contrary to that remark.

\section{Hypercubes and restricted-distance codes}\label{sec:binary}

Let \(Q_d\) be the graph on \(\{0,1\}^d\) with Hamming-distance
adjacency, and put \(N=2^d\). Throughout the hypercube sections,
\(d\ge2\), and \(d(x,y)\) denotes Hamming distance.

We study \(k=N-t\) with \(0\le t\le N-1\). Equivalently one specifies the omitted set \(T=V(Q_d)\setminus S\) of size \(q=\lvert T\rvert\). Then \(\Delta_S=\Delta_V-\Delta_T\). Integer entries in finite constructions denote binary expansions padded to the stated length.

\subsection{The full-cube sum}

We first compute the contribution obtained when every vertex is used as a
landmark.  By symmetry, this quantity depends only on the distance between the
two vertices.

\begin{lemma}\label{lem:walk}
For every integer \(r\ge 1\),
\[
\sum_{a=0}^r\binom{r}{a}\lvert r-2a\rvert = 2r\binom{r-1}{\lfloor(r-1)/2\rfloor}.
\]
\end{lemma}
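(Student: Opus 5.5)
The plan is to use symmetry to fold the sum onto the terms with \(r-2a>0\), and then telescope using the identity \((r-2a)\binom{r}{a}=r\bigl[\binom{r-1}{a}-\binom{r-1}{a-1}\bigr]\). This identity follows from \(\binom{r-1}{a}=\frac{r-a}{r}\binom{r}{a}\) and \(\binom{r-1}{a-1}=\frac{a}{r}\binom{r}{a}\), whose difference is \(\frac{r-2a}{r}\binom{r}{a}\). It holds for all integers \(a\) once we adopt the convention \(\binom{r-1}{-1}=0\).

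First, the substitution \(a\mapsto r-a\) preserves both \(\binom{r}{a}\) and \(|r-2a|\). Terms with \(2a=r\) vanish. Hence the sum equals
\[
2\sum_{0\le a<r/2}\binom{r}{a}(r-2a).
\]
Next, substituting the identity makes the sum telescope. We get
\[
\sum_{a=0}^{A}\Bigl[\binom{r-1}{a}-\binom{r-1}{a-1}\Bigr]=\binom{r-1}{A},
\]
where \(A\) is the largest integer with \(A<r/2\), namely \(A=\lfloor (r-1)/2\rfloor\). Check the parities: for \(r=2s\) we have \(A=s-1\), and for \(r=2s+1\) we have \(A=s\). In both cases this equals \(\lfloor(r-1)/2\rfloor\). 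Multiplying by \(2r\) gives the stated value \(2r\binom{r-1}{\lfloor(r-1)/2\rfloor}\).

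The only delicate point is the bookkeeping of the endpoint \(A\) in the two parity cases and the vanishing middle term. I will verify this directly, and as a sanity check use \(r=1\) (sum \(2\)) and \(r=2\) (sum \(4\)).
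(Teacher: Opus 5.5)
Your proof is correct and follows essentially the paper's route: fold the sum via the symmetry \(a\mapsto r-a\), then use \(a\binom{r}{a}=r\binom{r-1}{a-1}\) (which you package as \((r-2a)\binom{r}{a}=r[\binom{r-1}{a}-\binom{r-1}{a-1}]\)) so that the half-sum telescopes to \(r\binom{r-1}{\lfloor(r-1)/2\rfloor}\). Your explicit endpoint check in the two parity cases supplies the detail the paper's two-line proof leaves implicit.
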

\begin{proof}
The left-hand side is twice \(\sum_{a<r/2}(r-2a)\binom{r}{a}\). Using \(a\binom{r}{a}=r\binom{r-1}{a-1}\) and the symmetry of binomial coefficients, the inner sum collapses to \(r\binom{r-1}{\lfloor(r-1)/2\rfloor}\).
\end{proof}

\begin{theorem}\label{thm:fullcube}
If \(d(x,y)=r\), then
\[
F_d(r):=\Delta_{V(Q_d)}(x,y)=2^{d-r+1}\,r\binom{r-1}{\lfloor(r-1)/2\rfloor}.
\]
In particular \(F_d(1)=F_d(2)=N\), and \(F_d(r)\ge 3N/2\) for \(r\ge 3\).
\end{theorem}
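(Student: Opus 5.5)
The plan is to reduce to the coordinates where \(x\) and \(y\) differ and then apply Lemma~\ref{lem:walk}. Let \(D\) be the set of \(r\) coordinates in which \(x\) and \(y\) differ. Any landmark \(w\in\{0,1\}^d\) contributes \(d(x,w)-d(y,w)\), and the coordinates outside \(D\) cancel in this difference. Writing \(a\) for the number of coordinates in \(D\) where \(w\) agrees with \(x\), we get \(d(x,w)-d(y,w)=(r-a)-a=r-2a\). For each \(a\), there are \(\binom{r}{a}\) choices of \(w\) on \(D\) and \(2^{d-r}\) free choices off \(D\). Hence
\[
F_d(r)=2^{d-r}\sum_{a=0}^r\binom{r}{a}\lvert r-2a\rvert ,
\]
and Lemma~\ref{lem:walk} gives the closed form \(2^{d-r+1}r\binom{r-1}{\lfloor (r-1)/2\rfloor}\). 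This also shows \(F_d\) depends only on \(r\).

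The two special values follow by direct substitution. For \(r=1\) we get \(2^{d}\cdot 1\cdot\binom{0}{0}=N\). For \(r=2\) we get \(2^{d-1}\cdot 2\cdot\binom{1}{0}=N\).

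For the bound with \(r\ge3\), write \(F_d(r)=N\cdot c_r\) with \(c_r=2^{1-r}r\binom{r-1}{\lfloor(r-1)/2\rfloor}\). The task is to show \(c_r\ge 3/2\) for \(r\ge3\). Direct evaluation gives \(c_3=\tfrac14\cdot3\cdot2=3/2\) and \(c_4=\tfrac18\cdot4\cdot3=3/2\).

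To handle larger \(r\), compare \(c_{r+1}\) with \(c_r\) separately by parity. If \(r-1=2s\) is even, then \(\binom{r}{s}=\binom{2s+1}{s}=2\binom{2s}{s}\cdot\frac{2s+1}{2s+2}\), so
\[
\frac{c_{r+1}}{c_r}=\frac{r+1}{2r}\cdot\frac{2(2s+1)}{2s+2}=\frac{(r+1)\,r}{r(r+1)}=1 .
\]
If \(r-1=2s+1\) is odd, then \(\binom{2s+2}{s+1}=\binom{2s+1}{s}\cdot\frac{2s+2}{s+1}=2\binom{2s+1}{s}\), so
\[
\frac{c_{r+1}}{c_r}=\frac{r+1}{2r}\cdot2=\frac{r+1}{r}>1 .
\]
Thus \(c_r\) is nondecreasing from \(r=3\) onward, and \(c_r\ge c_3=3/2\) for all \(r\ge3\).

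The only step that needs care is this monotonicity check, specifically getting the binomial ratios right in each parity. A fallback is the standard estimate \(\binom{2s}{s}\ge 4^s/(2\sqrt s)\), which gives \(c_r\gtrsim\sqrt{r}\) and would also suffice after checking small \(r\) by hand.
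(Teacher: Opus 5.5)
Your proof is correct and follows the paper's own argument. You use the same reduction to the \(r\) coordinates where \(x\) and \(y\) differ, with each landmark contributing \(\lvert r-2a\rvert\) and \(2^{d-r}\) free choices elsewhere, and then apply Lemma~\ref{lem:walk}, evaluate the small cases directly, and use monotonicity of \(F_d(r)/N\). Your parity-split ratio computation (ratio \(1\) from odd \(r\) to \(r+1\), ratio \((r+1)/r\) from even \(r\)) makes explicit the ``nondecreasing in consecutive odd/even pairs'' step that the paper only asserts.
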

\begin{proof}
Translate so that \(x=0\) and the first \(r\) coordinates of \(y\) are ones. A vertex with \(a\) ones among those \(r\) coordinates contributes \(\lvert r-2a\rvert\); the remaining \(d-r\) bits are free. Lemma~\ref{lem:walk} gives the formula. The evaluations at \(r=1,2,3\) are immediate, and \(F_d(r)/N\) is nondecreasing in consecutive odd/even pairs thereafter.
\end{proof}

\subsection{Universal lower bound and distance two}

Adjacent pairs give a universal lower bound.  Pairs at distance two then
translate the problem into constraints on the columns of a binary matrix.

\begin{proposition}\label{prop:adjacent}
For adjacent \(x,y\) one has \(\Delta_S(x,y)=\lvert S\rvert\). Consequently \(\wdim_{N-t}(Q_d)\ge N-t\), and every candidate complement satisfies \(q\le t\).
\end{proposition}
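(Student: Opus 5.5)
The plan is to compute \(\Delta_S(x,y)\) for an adjacent pair directly from the definition, and then read off the two consequences from the fact that adjacent pairs exist.

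\emph{The identity.} Let \(x,y\) be adjacent, so they differ in exactly one coordinate, say coordinate \(c\). For any landmark \(w\), the distances \(d(x,w)\) and \(d(y,w)\) count mismatches over the same coordinates outside \(c\). In coordinate \(c\), the word \(w\) agrees with exactly one of \(x_c,y_c\), because these two bits are distinct. Hence
\[
|d(x,w)-d(y,w)|=1
\]
for every \(w\in V(Q_d)\), and summing over \(w\in S\) gives \(\Delta_S(x,y)=|S|\). This is also consistent with Theorem~\ref{thm:fullcube}, since \(F_d(1)=N\) is the case \(S=V\).

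\emph{The lower bound.} Since \(d\ge2\), adjacent pairs exist. A weak \((N-t)\)-resolving set \(S\) must satisfy \(\Delta_S(x,y)\ge N-t\) for such a pair, so \(|S|\ge N-t\). Taking the minimum over all such \(S\) gives \(\wdim_{N-t}(Q_d)\ge N-t\).

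\emph{The complement bound.} Write \(q=|T|=N-|S|\). Then \(|S|\ge N-t\) is equivalent to \(q\le t\). Alternatively, one can use \(\Delta_S=\Delta_V-\Delta_T=N-q\) on an adjacent pair.

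There is no real obstacle here; the only point needing care is the observation that each single landmark contributes exactly \(1\) to an adjacent pair.
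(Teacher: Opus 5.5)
Your proof is correct and follows the paper's argument. Each landmark is exactly one unit closer to one endpoint of an adjacent pair, which you justify via the single differing coordinate, so \(\Delta_S(x,y)=|S|\); the bound \(|S|\ge N-t\), equivalently \(q\le t\), then follows at once, and your extra remarks (existence of adjacent pairs, the complement form \(\Delta_V-\Delta_T=N-q\)) are harmless refinements of the same route.
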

\begin{proof}
For adjacent vertices \(x,y\), every vertex of the hypercube is one unit
closer to one endpoint than to the other.  Each landmark therefore contributes
one to \(\Delta_S(x,y)\), so \(\Delta_S(x,y)=|S|\).  If \(S\) is weak
\((N-t)\)-resolving, then \(|S|\ge N-t\), or equivalently \(q\le t\).
\end{proof}

\begin{lemma}\label{lem:distance-two}
Let \(T=V(Q_d)\setminus S\), list its \(q\) vertices as the rows of a
binary matrix, and denote its columns by \(c_1,\dots,c_d\).  If \(S\) is
weak \((N-t)\)-resolving and \(L=\lfloor t/2\rfloor\), then
\begin{equation}\label{eq:d2}
q-L\le d_H(c_a,c_b)\le L\qquad(a\neq b).
\end{equation}
Hence \(q\le 2L\), and if \(q>L\) the columns are distinct.
\end{lemma}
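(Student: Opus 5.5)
We will use pairs at distance two. Fix coordinates \(a\ne b\), a vertex \(x\), and let \(y=x+e_a+e_b\). By Theorem~\ref{thm:fullcube}, \(\Delta_V(x,y)=F_d(2)=N\), so \(\Delta_S(x,y)=N-\Delta_T(x,y)\). The weak \((N-t)\)-condition therefore becomes \(\Delta_T(x,y)\le t\) for every such pair.

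For a single omitted vertex \(w\), only its bits in coordinates \(a,b\) matter. If \((w_a,w_b)\) agrees with \(x\) in both coordinates or with \(y\) in both, then \(\Delta_w=2\). If it agrees with \(x\) in one coordinate and with \(y\) in the other, then \(\Delta_w=0\). Write \(u=(x_a,x_b)\). The rows \(w\in T\) with \(w_a\oplus w_b\ne x_a\oplus x_b\) are exactly those with \((w_a,w_b)\in\{u,u\oplus(1,1)\}\), and each contributes \(2\).

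\begin{itemize}[nosep]
\item If \(x_a\oplus x_b=0\), those rows satisfy \(w_a=w_b\). Hence \(\Delta_T=2(q-d_H(c_a,c_b))\).
\item If \(x_a\oplus x_b=1\), those rows satisfy \(w_a\ne w_b\). Hence \(\Delta_T=2\,d_H(c_a,c_b)\).
\end{itemize}

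Both parities of \(x_a\oplus x_b\) occur as \(x\) ranges over \(V(Q_d)\), so both constraints are forced.

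The two constraints give
\[
2\,d_H(c_a,c_b)\le t\quad\text{and}\quad 2\bigl(q-d_H(c_a,c_b)\bigr)\le t.
\]
Since the left-hand sides are even integers, they improve to \(d_H(c_a,c_b)\le\lfloor t/2\rfloor=L\) and \(q-d_H(c_a,c_b)\le L\). This is exactly \eqref{eq:d2}.

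Because \(d\ge2\), at least one pair \(a\ne b\) exists, and combining its two inequalities gives \(q-L\le L\), that is, \(q\le 2L\). If \(q>L\), the lower bound gives \(d_H(c_a,c_b)\ge q-L>0\), so the columns are distinct.

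The only step needing care is the case analysis of \(\Delta_w\) over the four bit patterns \((w_a,w_b)\), and with it checking that the count in each parity case is \(d_H\) or \(q-d_H\) rather than the other way round. I would verify this directly on the four patterns. This is a necessary condition only, so no converse is required.
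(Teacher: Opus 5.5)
Your proof is correct and follows the paper's argument. The two opposite pairs of a \(2\)-face on coordinates \(a,b\) receive contribution two from the two parity classes of \((w_a,w_b)\), of sizes \(d_H(c_a,c_b)\) and \(q-d_H(c_a,c_b)\), and \(\Delta_V=N\) at distance two turns the weak condition into each class having size at most \(L\). One small slip: the rows contributing two are those with \(w_a\oplus w_b=x_a\oplus x_b\), not \(\ne\). Your two bullet cases already use the correct version, and the conclusion is symmetric in the two classes anyway.
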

\begin{proof}
Fix two coordinates \(a,b\).  The opposite vertices of a \(2\)-face receive
contribution two from omitted vertices whose entries in columns \(a,b\) have
one parity.  The other opposite pair receives contribution two from the
complementary parity class.  The two parity classes have sizes
\(d_H(c_a,c_b)\) and \(q-d_H(c_a,c_b)\). Each size must be at most \(L\), which
gives~\eqref{eq:d2}.  Adding its two inequalities gives \(q\le2L\).  If
\(q>L\), equal columns would contradict the lower inequality.
\end{proof}

\begin{definition}
For a set \(D\) of positive integers, let \(A(\ell,D)\) be the largest
size of a code in \(\{0,1\}^{\ell}\) whose distances between distinct
words lie in \(D\). An integer interval used as \(D\) includes its endpoints.
Let \(C_d(t)\) be the largest \(q\) for which a binary \(q\times d\) matrix exists with distinct rows and with every pair of columns satisfying~\eqref{eq:d2}. For \(q>L\), put \(\gamma(q,L)=A(q-1,[q-L,L])\). To normalize an omitted-vertex matrix, translate one row to zero and delete that row. Flipping a column replaces its distance from another column by \(q-d_H\), so the symmetric interval in~\eqref{eq:d2} is preserved. Set \(\gamma(q,L)=\infty\) for \(q\le L\).
\end{definition}

The definition and Lemma~\ref{lem:distance-two} give
\[
C_d(t)=\max\{q\le 2L:\gamma(q,L)\ge d\text{ and \(q\) distinct rows are possible}\}
\]
and the lower bound \(\wdim_{N-t}(Q_d)\ge N-C_d(t)\).

The matching construction uses the full-cube sum for pairs at distance at
least three.

\begin{lemma}\label{lem:upper-construction}
Suppose \(F_d(r)-rt\ge N-t\) for every \(3\le r\le d\).  If a complement
\(T\) is counted by \(C_d(t)\), then \(V(Q_d)\setminus T\) is weak
\((N-t)\)-resolving.
\end{lemma}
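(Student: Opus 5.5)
We need \(\Delta_S(x,y)=\Delta_V(x,y)-\Delta_T(x,y)\ge N-t\) for every pair of distinct vertices, where \(S=V(Q_d)\setminus T\). The plan is to split by \(r=d(x,y)\) and bound \(\Delta_T\) differently in each regime.

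\emph{Distance one.} By Proposition~\ref{prop:adjacent}, \(\Delta_S(x,y)=|S|=N-q\). Since \(T\) is counted by \(C_d(t)\), Lemma~\ref{lem:distance-two} and the definition give \(q\le 2L\le t\). Hence \(\Delta_S(x,y)\ge N-t\).

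\emph{Distance two.} Suppose \(x,y\) differ exactly in coordinates \(a,b\). A vertex \(w\) contributes \(|d(x,w)-d(y,w)|\in\{0,2\}\), and it contributes \(2\) exactly when \(w\) agrees with \(x\) in both coordinates \(a,b\) or with \(y\) in both. Let \(x_a,x_b\) be the entries of \(x\) in these coordinates. The omitted vertices contributing \(2\) are then those whose pattern in columns \(a,b\) is \((x_a,x_b)\) or its complement. This is exactly one of the two parity classes of the pair \((c_a,c_b)\) described in Lemma~\ref{lem:distance-two}, so it has size \(d_H(c_a,c_b)\) or \(q-d_H(c_a,c_b)\). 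Condition~\eqref{eq:d2} bounds each of these by \(L\), so \(\Delta_T(x,y)\le 2L\le t\). Theorem~\ref{thm:fullcube} gives \(\Delta_V(x,y)=F_d(2)=N\), so \(\Delta_S(x,y)\ge N-2L\ge N-t\).

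\emph{Distance \(r\ge3\).} Here I would use only the crude bound \(|d(x,w)-d(y,w)|\le d(x,y)=r\) for each omitted \(w\), which gives \(\Delta_T(x,y)\le rq\le rt\). Then \(\Delta_S(x,y)\ge F_d(r)-rt\ge N-t\) by the hypothesis. This covers every \(3\le r\le d\).

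Every pair of distinct vertices falls into one of these cases, so \(V(Q_d)\setminus T\) is weak \((N-t)\)-resolving.

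The only step requiring care is the distance-two case. There one must check that the set of omitted vertices contributing to a given opposite pair of a \(2\)-face is exactly one parity class, of size \(d_H(c_a,c_b)\) or \(q-d_H(c_a,c_b)\). This is the converse direction of Lemma~\ref{lem:distance-two}, and the same parity observation used there should suffice. The distinct-row requirement on \(T\) is automatic, since \(T\) is a set of vertices.
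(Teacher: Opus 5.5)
Your proof is correct and follows the paper's argument exactly. You use the same three cases: distance one via Proposition~\ref{prop:adjacent} and \(q\le 2L\le t\), distance two via the parity classes of Lemma~\ref{lem:distance-two} with \(F_d(2)=N\), and distance \(r\ge3\) via the triangle-inequality bound \(\Delta_T\le rq\le rt\). Your explicit check, that the omitted vertices contributing to a distance-two pair form exactly one parity class of size \(d_H(c_a,c_b)\) or \(q-d_H(c_a,c_b)\), spells out the converse direction that the paper invokes only by citing that lemma.
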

\begin{proof}
Pairs at distance one are safe by Proposition~\ref{prop:adjacent} because
\(|T|\le t\).  Lemma~\ref{lem:distance-two} gives the required bound for
pairs at distance two.  For a pair at distance \(r\ge3\), each omitted
vertex removes at most \(r\) by the triangle inequality.  Since
\(|T|\le C_d(t)\le2L\le t\), the remaining contribution is at least
\(F_d(r)-rt\ge N-t\).
\end{proof}

\begin{theorem}\label{thm:reduction}
Suppose \(F_d(r)-rt\ge N-t\) for every \(3\le r\le d\). Then
\[
\wdim_{N-t}(Q_d)=N-C_d(t).
\]
\end{theorem}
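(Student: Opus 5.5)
The identity follows from two inequalities that are already present in the preceding results. We combine them and check that the definition of \(C_d(t)\) matches both sides exactly.

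\emph{Lower bound.} Let \(S\) be a minimum weak \((N-t)\)-resolving set and \(T=V(Q_d)\setminus S\), with \(q=|T|\). List the rows of \(T\) in any order to form a \(q\times d\) matrix. Its rows are distinct because they are distinct vertices. By Lemma~\ref{lem:distance-two}, every pair of columns satisfies~\eqref{eq:d2}. Hence \(q\le C_d(t)\) by the definition of \(C_d(t)\) as a maximum, and so \(\wdim_{N-t}(Q_d)=|S|=N-q\ge N-C_d(t)\). This direction does not need the slack hypothesis.

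\emph{Upper bound.} Take \(q=C_d(t)\) and a \(q\times d\) matrix with distinct rows whose columns satisfy~\eqref{eq:d2}. Such a matrix exists by definition, and its maximum is finite since \(q\le 2L\). Regard its distinct rows as a set \(T\) of \(q\) vertices of \(Q_d\). Lemma~\ref{lem:upper-construction} applies to \(T\) under the hypothesis \(F_d(r)-rt\ge N-t\) for \(3\le r\le d\), so \(V(Q_d)\setminus T\) is weak \((N-t)\)-resolving and has size \(N-C_d(t)\).

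\emph{Compatibility of the two definitions.} The one point requiring care is that the distance-two analysis in Lemma~\ref{lem:distance-two} is an exact characterisation, not merely a necessary condition. Lemma~\ref{lem:upper-construction} invokes Lemma~\ref{lem:distance-two} for pairs at distance two. So I would make explicit that the sum of contributions over a \(2\)-face pair \(x,y\) equals \(N\) minus twice the size of the relevant parity class. Consequently~\eqref{eq:d2} is equivalent to \(\Delta_S(x,y)\ge N-t\) for every pair at distance two, using \(2\cdot\text{class}\le t\iff \text{class}\le L\).

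I would also check the degenerate cases:
\begin{itemize}
  \item \(q=0\) or \(q=1\): the condition~\eqref{eq:d2} is vacuous or trivially satisfied, since the columns are single bits with \(d_H\le1\le L\). If \(L=0\), then \(q=0\) is forced by \(q\le 2L\), and the formula reads \(\wdim_N=N\).
  \item \(C_d(t)\le t\): this holds because \(C_d(t)\le2L\le t\), which Proposition~\ref{prop:adjacent} needs for adjacent pairs.
\end{itemize}

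With these checks the two bounds coincide, giving \(\wdim_{N-t}(Q_d)=N-C_d(t)\).
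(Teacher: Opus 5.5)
Your proof is correct and follows the paper's argument. The lower bound comes from applying Lemma~\ref{lem:distance-two} to the complement of an optimal set, and the upper bound from applying Lemma~\ref{lem:upper-construction} to a matrix attaining \(C_d(t)\). Your explicit identity \(\Delta_S(x,y)=N-2\cdot(\text{parity class size})\) for pairs at distance two makes \eqref{eq:d2} an exact characterisation rather than only a necessary condition. That is a worthwhile clarification of a step the paper leaves implicit when Lemma~\ref{lem:upper-construction} cites Lemma~\ref{lem:distance-two} for sufficiency.
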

\begin{proof}
Lemma~\ref{lem:distance-two} gives the lower bound \(N-C_d(t)\).
Lemma~\ref{lem:upper-construction} gives a weak resolving set of that size.
\end{proof}

\begin{corollary}\label{cor:eventual-reduction}
For each fixed \(t\), the identity
\(\wdim_{2^d-t}(Q_d)=2^d-C_d(t)\) holds for all sufficiently large \(d\).
\end{corollary}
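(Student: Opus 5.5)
The corollary should follow from Theorem~\ref{thm:reduction} once we check that its hypothesis holds for all large \(d\) with \(t\) fixed. That hypothesis asks that \(F_d(r)-rt\ge N-t\) for every \(3\le r\le d\), where \(N=2^d\). I will rewrite it as \(F_d(r)-N\ge (r-1)t\) and bound the left-hand side from below uniformly in \(r\).

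By Theorem~\ref{thm:fullcube}, \(F_d(r)\ge 3N/2\) for \(r\ge3\), so \(F_d(r)-N\ge N/2=2^{d-1}\) for every \(r\ge 3\). The right-hand side satisfies \((r-1)t\le (d-1)t\). It is therefore enough to have
\[
2^{d-1}\ge (d-1)t .
\]
For fixed \(t\), this holds for all sufficiently large \(d\), since exponential growth dominates linear growth. For \(t=0\) it holds trivially for every \(d\ge2\).

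For every such \(d\), Theorem~\ref{thm:reduction} gives \(\wdim_{2^d-t}(Q_d)=2^d-C_d(t)\). The only check left is that \(k=2^d-t\) lies in the range considered, i.e. \(t\le N-1\), which also holds for large \(d\). There is no real obstacle. The one point needing care is the appeal to the bound \(F_d(r)\ge 3N/2\) for \emph{all} \(r\ge3\), not just \(r=3\); this is part of the statement of Theorem~\ref{thm:fullcube}, so we can take it as given. If one wants an explicit threshold, \(2^{d-1}\ge (d-1)t\) itself serves as the sufficient condition.
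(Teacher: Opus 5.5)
Your proposal is correct and is essentially the paper's own argument. The paper likewise takes \(F_d(r)\ge 3N/2\) for \(r\ge3\) from Theorem~\ref{thm:fullcube} and notes that the hypothesis of Theorem~\ref{thm:reduction} then follows once \(N/2\ge td-t\), which is exactly your condition \(2^{d-1}\ge(d-1)t\); your check that \(t\le N-1\) for large \(d\) is a harmless detail the paper leaves implicit.
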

\begin{proof}
Theorem~\ref{thm:fullcube} gives \(F_d(r)\ge3N/2\) for \(r\ge3\).
Thus the hypothesis of Theorem~\ref{thm:reduction} follows once
\(N/2\ge td-t\), which holds for all sufficiently large \(d\).
\end{proof}

For the finite dimensions below, we check the hypothesis of
Theorem~\ref{thm:reduction} directly.  When it fails, pairs at larger
distances can force a smaller complement, so we return to the definition.

\subsection{Partial Hadamard matrices}

The extremal case of the lower bound has a standard interpretation in terms
of orthogonal sign vectors.

\begin{theorem}\label{thm:hadamard}
Let \(t>0\) be even and suppose Theorem~\ref{thm:reduction} applies. Then \(\wdim_{N-t}(Q_d)=N-t\) if and only if a \(t\times d\) sign matrix with pairwise orthogonal columns and distinct rows exists. In particular:
\begin{enumerate}[leftmargin=1.4em]
\item \(d\le t\), by linear independence;
\item if \(t\equiv 2\pmod{4}\) and \(d\ge 3\), equality is impossible: three pairwise distances would equal the odd number \(t/2\), but the sum of the three pairwise Hamming distances among any three binary vectors is even;
\end{enumerate}
\end{theorem}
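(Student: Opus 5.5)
Under the hypothesis of Theorem~\ref{thm:reduction}, \(\wdim_{N-t}(Q_d)=N-C_d(t)\). Proposition~\ref{prop:adjacent} and Lemma~\ref{lem:distance-two} give \(C_d(t)\le 2L=t\). So equality \(\wdim_{N-t}(Q_d)=N-t\) holds if and only if \(C_d(t)=t\). By the definition of \(C_d(t)\), this means there is a \(t\times d\) binary matrix with distinct rows whose columns satisfy \eqref{eq:d2} with \(q=t\) and \(L=t/2\). The two inequalities then read \(t/2\le d_H(c_a,c_b)\le t/2\), so every pair of columns is at Hamming distance exactly \(t/2\).

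The next step is the translation to signs. I would replace each bit \(b\) by \((-1)^b\). For \(\pm1\) vectors of length \(t\), the inner product of the images of \(c_a\) and \(c_b\) equals \(t-2d_H(c_a,c_b)\). Hence distance exactly \(t/2\) is equivalent to orthogonality. Distinctness of rows is preserved in both directions, since the map is a bijection on entries. This gives the equivalence in the theorem.

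For item 1, the \(d\) columns are pairwise orthogonal nonzero vectors in \(\mathbb R^t\). They are therefore linearly independent, which forces \(d\le t\).

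For item 2, suppose \(t\equiv2\pmod4\), so \(t/2\) is odd, and suppose \(d\ge3\). I would take three columns \(c_1,c_2,c_3\). The parity identity is
\[
d_H(c_1,c_2)+d_H(c_2,c_3)+d_H(c_1,c_3)\equiv0\pmod2,
\]
which holds because \(d_H(u,v)\equiv |u|+|v|\pmod 2\), so the sum is congruent to \(2(|c_1|+|c_2|+|c_3|)\). In our situation the left side equals \(3t/2\), which is odd. This contradiction shows equality is impossible.

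No step is a real obstacle. The only point needing care is that equality forces \(q=t\) exactly. This needs \(C_d(t)\le 2L\), which is part of Lemma~\ref{lem:distance-two}, and the fact that \(t\) is even, so that \(2L=t\).
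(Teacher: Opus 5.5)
Your proof is correct and follows essentially the same route as the paper. Under the reduction hypothesis, equality forces an omitted set of size $q=t=2L$, so \eqref{eq:d2} fixes every column distance at $t/2$; the map $0\mapsto+1$, $1\mapsto-1$ turns this into orthogonality, and items 1 and 2 follow from linear independence and the three-vector parity identity. The only cosmetic difference is that you pass through $C_d(t)=t$ in both directions, whereas the paper reads the forward direction directly off the complement of an optimal resolving set.
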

\begin{proof}
Assume first that equality holds.  The omitted set has size \(q=t=2L\), so
Lemma~\ref{lem:distance-two} forces \(d_H(c_a,c_b)=L\) for every two
columns.  After mapping \(0\) to \(+1\) and \(1\) to \(-1\), the columns
are pairwise orthogonal sign vectors.  The rows remain distinct.  Conversely,
such a sign matrix gives a complement counted by \(C_d(t)\), and
Theorem~\ref{thm:reduction} gives equality.

Pairwise orthogonal columns are linearly independent, so \(d\le t\).  If
\(t\equiv2\pmod4\) and \(d\ge3\), three binary columns would have pairwise
distance \(t/2\), an odd number.  This is impossible because the sum of the
three pairwise Hamming distances among three binary vectors is even.
\end{proof}

For an odd deficit \(t\), separately, the distance-two bound gives
\(q\le2\lfloor t/2\rfloor=t-1\), so \(\wdim_{N-t}(Q_d)=N-t\) is impossible.

The relaxation satisfies \(C_d(2L)=C_d(2L+1)\), since both deficits
share the same load limit \(L\). The corresponding equality of weak
dimensions holds without any tail hypothesis, as we now prove.

\section{Paired deficits and sharp stabilization}\label{sec:stabilization}

The paired values follow from a parity property of the resolving sets
themselves.

\begin{theorem}[Parity pairing]\label{thm:parity-pairing}
Let \(d\ge2\) and \(1\le s\le2^{d-1}\). A subset of \(V(Q_d)\) is
weak \((2s-1)\)-resolving if and only if it is weak \(2s\)-resolving.
Consequently,
\[
\wdim_{2s-1}(Q_d)=\wdim_{2s}(Q_d).
\]
\end{theorem}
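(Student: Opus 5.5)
The key observation I would use is that \(\Delta_S(x,y)\equiv|S|\cdot d(x,y)\pmod 2\) in \(Q_d\). Then \(\Delta_S(x,y)\) has a fixed parity that depends only on \(|S|\) and \(d(x,y)\), and the claim \(\Delta_S\ge2s-1\Rightarrow\Delta_S\ge2s\) follows in every case except one. That remaining case has to be ruled out separately.

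\textbf{Step 1: parity.} Since \(Q_d\) is bipartite, \(d(x,w)\equiv d(y,w)+d(x,y)\pmod 2\) for every \(w\). Therefore
\[
|d(x,w)-d(y,w)|\equiv d(x,y)\pmod 2,
\]
and summing over \(w\in S\) gives \(\Delta_S(x,y)\equiv|S|\,d(x,y)\pmod2\).

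\textbf{Step 2: case split.} One direction of the equivalence is trivial, since weak \(2s\)-resolving implies weak \((2s-1)\)-resolving. For the converse, let \(S\) be weak \((2s-1)\)-resolving.
\begin{itemize}
\item By Proposition~\ref{prop:adjacent}, adjacent pairs give \(|S|\ge2s-1\).
\item If \(|S|\) is even, then \(|S|\ge2s\). Every \(\Delta_S(x,y)\) is then even by Step~1, so each value \(\ge2s-1\) is in fact \(\ge2s\).
\item If \(|S|\) is odd, pairs at odd distance have odd \(\Delta_S\) and pairs at even distance have even \(\Delta_S\). Even-distance pairs are fine for the same reason as above. Odd-distance pairs are fine whenever \(|S|\ge 2s+1\).
\end{itemize}
So the only possible failure is \(|S|=2s-1\) together with some odd-distance pair \((x,y)\) having \(\Delta_S(x,y)=2s-1\).

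\textbf{Step 3: the bad case (main obstacle).} Here I must show that no set \(S\) of size exactly \(2s-1\) is weak \((2s-1)\)-resolving. The plan is to produce a pair \(x,y\) with \(\Delta_S(x,y)<2s-1\); the natural candidates are at distance two. For coordinates \(a,b\), the two diagonals of a \(2\)-face receive \(2|S_{\mathrm{even}}|\) and \(2|S_{\mathrm{odd}}|\) respectively, where \(S_{\mathrm{even}}\) and \(S_{\mathrm{odd}}\) are the parity classes of \(S\) in columns \(a,b\). This is the mechanism of Lemma~\ref{lem:distance-two}, applied to \(S\) rather than to \(T\). Because \(|S|=2s-1\) is odd, one parity class has size at most \(s-1\), giving a pair with \(\Delta_S\le 2s-2<2s-1\). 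This contradiction shows the bad case never occurs.

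It is worth checking that this \(2\)-face argument genuinely applies here. For \(x=(0,0,z)\) and \(y=(1,1,z)\), a landmark \(w\) contributes \(2\) if \((w_a,w_b)\in\{00,11\}\) and \(0\) otherwise, since a landmark with \((w_a,w_b)\in\{01,10\}\) is equidistant from \(x\) and \(y\). Thus \(\Delta_S(x,y)=2\,\#\{w\in S: w_a=w_b\}\). The other diagonal, for example \(x=(0,1,z)\) and \(y=(1,0,z)\), counts \(w_a\ne w_b\) with the same factor \(2\). The two class sizes sum to \(|S|=2s-1\), so the minimum is at most \(s-1\), and some distance-two pair has \(\Delta_S\le 2s-2\). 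This needs \(d\ge2\), which is assumed. It also covers \(s=1\) (\(|S|=1\)), where the bound \(2s-2=0\) means some distance-two pair is not distinguished at all, contradicting weak \(1\)-resolving.

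\textbf{Step 4: conclusion.} The resolving families for parameters \(2s-1\) and \(2s\) coincide, so \(\wdim_{2s-1}(Q_d)=\wdim_{2s}(Q_d)\). Both values are finite because \(2s\le 2^d=\kappa(Q_d)\), which follows from \(F_d(1)=F_d(2)=N\) and \(F_d(r)\ge 3N/2\) for \(r\ge3\) in Theorem~\ref{thm:fullcube}.
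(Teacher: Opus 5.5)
Your proposal is correct and uses the same three facts as the paper's proof: the $2$-face diagonal count, the oddness of every summand at odd distance (so $\Delta_S(x,y)\ge|S|$, which your claim that odd-distance pairs are fine when $|S|\ge2s+1$ uses without stating it), and the evenness of every summand at even distance. The only difference is ordering: the paper applies the diagonal count first to obtain $|S|\ge2s$ directly, which makes your case split on the parity of $|S|$ unnecessary.
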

\begin{proof}
Suppose \(S\) is weak \((2s-1)\)-resolving. Choose two coordinates and
consider the two opposite pairs in a face on those coordinates. Each
landmark contributes two to one of these pairs and zero to the other.
Both sums are even and at least \(2s-1\), hence at least \(2s\).
Their sum is \(2|S|\), so \(|S|\ge2s\).

For a pair at odd distance, each landmark's distance difference is an
odd integer in absolute value, and is therefore at least one. This is
the bipartite parity observation of Peterin, Sedlar, \v{S}krekovski and
Yero~\cite[Lemma~12]{PeterinSedlarSkrekovskiYero2024}. Thus
\(\Delta_S(x,y)\ge|S|\ge2s\). For a pair at even distance, every
summand is even, so \(\Delta_S(x,y)\ge2s-1\) implies
\(\Delta_S(x,y)\ge2s\). The reverse implication is immediate.
\end{proof}

The matrix formulation then reduces the evaluation of these paired
values to restricted-distance codes.

\begin{theorem}\label{thm:general}
Let \(L\ge 1\) and \(t\in\{2L,2L+1\}\). Write \(\gamma(q,L)\) as above.
\begin{enumerate}[leftmargin=1.4em]
\item Always \(C_d(t)\le 2L\). If \(L\) is odd and \(d\ge 3\), then \(C_d(t)\le 2L-1\).
\item Suppose \(L\ge2\) and a Hadamard matrix of order \(2L\) exists. Then \(\gamma(2L,L)=\gamma(2L-1,L)=2L\). Moreover, if \(d_0\) is the least number of Hadamard columns that distinguish its \(2L\) rows, then \(C_d(t)=2L\) for \(d_0\le d\le2L\).
\item If \(q=2L-2\) and \(L\ge3\), parity extension turns the allowed distances \(L-2,L-1,L\) into \(\{L-2,L\}\) when \(L\) is even, and into \(\{L-1,L+1\}\) when \(L\) is odd. This permits the use of narrow-distance bounds such as those of Roth--Seroussi~\cite{RothSeroussi2007} when their hypotheses apply.
\item For \(q=L\), repeated columns are allowed and \(C_d(t)\ge\min(L,2^d)\). Thus \(C_d(t)=L\) for all large \(d\).
\end{enumerate}
If in addition \(F_d(r)-rt\ge N-t\) for \(r\ge3\), then \(\wdim_{N-t}(Q_d)=N-C_d(t)\). The inequality \(\gamma(q,L)\ge d\) is necessary for \(q>L\); it is sufficient only when the selected code columns also distinguish the \(q\) rows.
\end{theorem}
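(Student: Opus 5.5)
The plan is to treat the four items separately, using Lemma~\ref{lem:distance-two} and Theorem~\ref{thm:reduction} throughout. The final sentences of the statement need little work. The displayed identity is Theorem~\ref{thm:reduction}. Necessity of \(\gamma(q,L)\ge d\) for \(q>L\) comes from normalizing: translate one omitted row to zero and delete it. The remaining \(q-1\) coordinates of the \(d\) columns then form a code in \(\{0,1\}^{q-1}\) with distances in \([q-L,L]\). These columns are distinct by Lemma~\ref{lem:distance-two}, and deleting an all-zero row does not change any distance.

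For item~1, the bound \(C_d(t)\le 2L\) is the inequality \(q\le 2L\) of Lemma~\ref{lem:distance-two}. For \(L\) odd and \(d\ge3\), suppose \(q=2L\). Then \eqref{eq:d2} forces every pairwise column distance to equal \(L\). Any three columns would therefore have pairwise distances summing to \(3L\), which is odd. This contradicts the evenness of \(d_H(u,v)+d_H(v,w)+d_H(u,w)\), exactly as in Theorem~\ref{thm:hadamard}(2).

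For item~2, I would normalize a Hadamard matrix \(H\) of order \(2L\) so that its first row is all \(+1\) and map signs to bits. Its \(2L\) columns then have pairwise distance \(L\), which is in \([q-L,L]=\{L\}\) when \(q=2L\). Deleting the zero row gives \(2L\) words of length \(2L-1\) with all distances \(L\), so \(\gamma(2L,L)\ge 2L\). For the upper bound, map the words to \(\pm1\) vectors in \(\mathbb R^{2L-1}\) after appending the constant coordinate; pairwise orthogonality then caps their number at \(2L\). For \(\gamma(2L-1,L)\), the allowed interval is \([L-1,L]\) in length \(2L-2\). I would get the lower bound by deleting a further row, using the standard fact that the \(2L\) columns stay distinct with distances in \(\{L-1,L\}\) (distance \(L\) or \(L-1\), depending on whether the deleted row's entries agree). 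For the matching upper bound, I would take a parity-extension or Gram-matrix argument: the Gram matrix of the \(\pm1\) vectors has diagonal \(2L-2\) and off-diagonal entries in \(\{0,\pm2\}\)... I expect this bound \(\le 2L\) to be the main obstacle. I would first try a rank argument on the Gram matrix after appending a parity bit, which makes all distances even and therefore equal to \(L\) or \(L\pm1\), whichever is even. That returns to an orthogonal or two-distance setting, where a Delsarte linear-programming bound yields \(2L\).

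Still in item~2, the equality \(C_d(t)=2L\) for \(d_0\le d\le 2L\) comes from taking any \(d\) Hadamard columns that contain a distinguishing set of \(d_0\) columns. The \(2L\) rows are then distinct, and all column distances equal \(L\), so the condition of \(C_d\) is met. Combining with item~1 gives equality. Note that \(L\) is even here, since \(L\ge2\) and a Hadamard matrix exists.

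For item~3, take \(q=2L-2\). Normalized words have length \(2L-3\) and distances in \([L-2,L]\). Appending a parity bit makes every distance even. Distance \(L-1\) becomes \(L\) when \(L-1\) is odd and remains \(L-1\) otherwise, which yields the stated sets \(\{L-2,L\}\) or \(\{L-1,L+1\}\).

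For item~4, take \(q=L\). The interval in \eqref{eq:d2} is then \([0,L]\), so every column pair satisfies the constraint automatically. We only need \(L\) distinct rows in \(\{0,1\}^d\), which is possible whenever \(L\le 2^d\); this gives \(C_d(t)\ge\min(L,2^d)\). For the statement that \(C_d(t)=L\) for all large \(d\), I would show that \(q>L\) is eventually impossible. Distinct columns force \(d\le \gamma(q,L)\le 2^{q-1}\), a bound independent of \(d\), so every \(q\) with \(L<q\le 2L\) is excluded once \(d>2^{2L-1}\).
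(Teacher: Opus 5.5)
Your proposal is correct and follows the paper's proof essentially item by item: the distance-two bound with the three-column parity obstruction, the normalized Hadamard code with an orthogonality (rank) bound, puncturing, parity extension, and finiteness of the code capacities for \(q>L\). The step you leave tentative, \(\gamma(2L-1,L)\le 2L\), needs no Delsarte bound. Since \(L\) is even (as you note), appending a parity bit turns both distances \(L-1\) and \(L\) into exactly \(L\) in length \(2L-1\). The orthogonality bound you already proved for \(\gamma(2L,L)\) then applies verbatim, which is exactly the paper's argument.
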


\begin{proof}
The first assertion is the distance-two bound and the three-column parity obstruction. A normalized order-\(2L\) Hadamard matrix gives an equidistant code of length \(2L-1\), distance \(L\), and size \(2L\), which is optimal by the equidistant rank bound. Puncturing gives \(2L\) words of length \(2L-2\) at distances \(L-1,L\). Since a Hadamard order greater than two is divisible by four, \(L\) is even; parity extension reverses the puncture and the same rank bound proves optimality. Choosing a least separating subset of Hadamard columns and then adding columns proves the claim about \(C_d(t)\). The parity-extension calculation in the third item is immediate. Finally, for \(q=L\) every pair of columns satisfies~\eqref{eq:d2}; choose \(L\) distinct rows when \(2^d\ge L\). Each \(q>L\) has finite code capacity, so no such \(q\) persists for all large \(d\). The final statements follow from Theorem~\ref{thm:reduction} and the definition of \(\gamma\).
\end{proof}

Theorem~\ref{thm:general} does not determine \(C_d(t)\) for every \(L\).
The remaining cases include instances of the Hadamard conjecture and open
restricted-distance coding problems. Section~\ref{sec:small} evaluates the
capacities needed here.

\begin{theorem}[Sharp stabilization]\label{thm:stabilization}
Let \(L\ge1\) and \(t\in\{2L,2L+1\}\). Then
\[
\wdim_{2^d-t}(Q_d)=2^d-L\qquad(d\ge2^L+1).
\]
At \(d=2^L\), one has \(\wdim_{2^d-t}(Q_d)\le2^d-L-1\).
Thus \(2^L+1\) is the least dimension from which the displayed formula
holds in every dimension.
\end{theorem}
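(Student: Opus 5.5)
The plan is to pin down \(C_d(t)\) exactly in this range and then use Theorem~\ref{thm:reduction}. Two things are needed: the tail hypothesis of that theorem, and the value \(C_d(t)\). For the tail hypothesis, note that the construction only ever omits at most \(L+1\le t\) vertices. For a pair at distance \(r\ge3\), Theorem~\ref{thm:fullcube} gives \(F_d(r)\ge 3N/2\). It therefore suffices that \(N/2\ge r(L+1)-t\) for all \(r\le d\). Here \(N=2^d\) is doubly exponential in \(L\) once \(d\ge 2^L\), so the check is routine. When \(L=1\) and \(d=2\) there are no pairs at distance \(\ge3\) at all. By Theorem~\ref{thm:parity-pairing}, it is enough to treat \(t=2L\); the case \(t=2L+1\) follows since \(2^d-2L-1=2s-1\) pairs with \(2s\).

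\emph{Upper bound on the weak dimension for \(d\ge 2^L+1\).} Part~4 of Theorem~\ref{thm:general} gives \(C_d(t)\ge L\): take \(L\) distinct rows, and every pair of columns automatically satisfies~\eqref{eq:d2} because \(q=L\). With the tail check this yields \(\wdim\le 2^d-L\). For the matching lower bound I must show that no \(q\times d\) matrix with \(L<q\le 2L\) satisfies~\eqref{eq:d2} once \(d\ge 2^L+1\). Since flipping columns preserves~\eqref{eq:d2}, I normalize so that the first row is zero. For \(q>L\), the lower inequality \(d_H\ge q-L\ge1\) makes the columns distinct. Passing to sign vectors \(v_a\in\{\pm1\}^q\), the conditions become \(|\langle v_a,v_b\rangle|\le 2L-q<q\), with \(v_a\) all having first entry \(+1\).
\begin{itemize}[leftmargin=1.4em]
\item For \(q=L+1\), the only constraint is \(v_a\ne\pm v_b\), so \(d\le 2^{q-1}=2^L\).
\item For \(q=2L\), the columns are orthogonal, so \(d\le 2L\le 2^L\).
\end{itemize}
For intermediate \(q\), the aim is \(d\le 2^L\). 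The first thing to try is the absolute (Delsarte-type) bound for a set of antipodal classes with few inner-product values, namely \(\{q-2L,\dots,2L-q\}\), compared against \(2^L\). The second thing to try is an averaging argument over the rows: delete a row and induct on \(q\), since the interval \([q-L,L]\) widens by at most one per deleted row.

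\emph{Sharpness at \(d=2^L\).} Take \(q=L+1\) omitted vertices: the zero vector together with the \(L\) rows of the \(L\times 2^L\) matrix whose columns are all of \(\{0,1\}^L\). The rows are distinct coordinate functions, and all column distances lie in \([1,L]=[q-L,L]\). The tail hypothesis holds as above, so Lemma~\ref{lem:upper-construction} gives a weak \((2^d-t)\)-resolving set of size \(2^d-L-1\).

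The main obstacle is the intermediate-\(q\) bound \(d\le2^L\) for \(L+1<q<2L\). The endpoint cases are clean, but a naive diameter (Kleitman) bound exceeds \(2^L\). The argument must therefore genuinely combine the lower and upper distance constraints, and possibly the distinct-row condition as well.
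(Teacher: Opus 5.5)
\paragraph{Gap: the intermediate values of \(q\).} Most of your framework is sound: the lower bound through Lemma~\ref{lem:distance-two}, the upper bound from \(L\) arbitrary omitted vertices, the \((L+1)\)-row construction at \(d=2^L\), and the use of Theorem~\ref{thm:parity-pairing} to reduce to \(t=2L\). The gap is the step that carries the theorem. You must show that no \(q\times d\) matrix with \(L<q\le 2L\) satisfies \eqref{eq:d2} once \(d>2^L\). Your two endpoint cases \(q=L+1\) and \(q=2L\) cover everything only for \(L\le2\). For \(L\ge3\) the values \(L+1<q<2L\) are left open, so you have not excluded omitting more than \(L\) vertices, and the lower bound \(2^d-L\) is unproved.

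Your diagnosis of the obstacle is also mistaken. Nothing requires combining the upper and lower distance constraints, and the distinct-row condition plays no role. The lower constraint alone suffices, via the Singleton bound; this is the paper's argument:
\begin{itemize}[leftmargin=1.4em]
\item Normalize by translating one row to zero and deleting it. This leaves column distances unchanged, so the \(d\) columns form a code of length \(q-1\) with minimum distance at least \(q-L\).
\item Delete any \(q-L-1\) further coordinates. Distinct columns stay distinct, because their distance exceeded \(q-L-1\).
\item The result has length \(L\), so \(d\le 2^L\), giving \(\gamma(q,L)\le 2^L\) for every \(q>L\).
\end{itemize}

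\paragraph{Your own second suggestion already closes this.} The interval does not genuinely widen when you delete a row. Deleting one row lowers each column distance by at most one, so \([q-L,L]\) becomes \([(q-1)-L,L]\). That is exactly constraint \eqref{eq:d2} for \(q-1\) rows with the same \(L\). While \(q-1>L\) its lower end is positive, so the columns remain distinct and \(d\) is preserved. Iterating down to \(q=L+1\) lands on your clean case \(d\le2^{q-1}=2^L\). The orthogonality argument for \(q=2L\) and the Delsarte route are then unnecessary.

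\paragraph{A smaller citation point.} At \(L=2\), \(d=4\), \(t=4\), the literal hypothesis \(F_d(r)-rt\ge N-t\) of Lemma~\ref{lem:upper-construction} fails: \(F_4(4)-16=8<12\). You should therefore run that lemma's argument with \(|T|\le L+1\) in place of \(t\), as your refined inequality does (there it holds with equality), rather than cite the lemma as stated.
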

\begin{proof}
If \(q>L\), a normalized column code has length \(q-1\) and minimum
distance at least \(q-L\). Puncturing any \(q-L-1\) positions is
injective, leaving a code of length \(L\). This is the Singleton bound~\cite{Singleton1964,MacWilliamsSloane1977},
and gives \(\gamma(q,L)\le2^L\). Hence \(d>2^L\) forces \(q\le L\).
Any \(L\) distinct omitted vertices satisfy the distance-two constraints.

At \(d=2^L\), instead take all binary words of length \(L\) as columns
and adjoin an all-zero row. These \(L+1\) rows are distinct, and column
distances lie between \(1\) and \(L\), as required.

It remains to check pairs at greater distance. For \(L\ge2\) and
\(d\ge2^L\),
\[
2^{d-1}\ge d(L+1)-2L.
\]
For \(L=2,d=4\) this is equality. For \(L\ge3,d=2^L\), it follows from
\(L+1\le d/2\) and \(2^d\ge d^2\); increasing \(d\) preserves the
inequality. Each construction omits at most \(L+1\) vertices, so for
\(r\ge3\) its remaining contribution is at least
\(3\cdot2^{d-1}-d(L+1)\ge2^d-2L\).
Adjacent pairs also meet this bound. For \(L=1,d=2\), omit two adjacent
vertices; for \(L=1,d\ge3\), omit one vertex and use
\(2^{d-1}\ge d-2\). These observations prove both assertions.
\end{proof}

\section{Exact values for small deficits}\label{sec:small}
\subsection{Deficits through thirteen}

We now evaluate the restricted-distance capacities required for
\(0\le t\le13\) and then verify the remaining small dimensions directly.
For \(Q_2\), the values at \(t=0,1,2,3\) are \(4,4,2,2\), respectively.
Indeed, distance two forces no omissions when \(t\le1\); two adjacent
landmarks are weak \(2\)-resolving, and a single landmark does not resolve
the cycle. We may therefore assume \(d\ge3\).

\begin{theorem}\label{thm:to13}
Let \(d\ge 3\), \(N=2^d\), and \(0\le t\le\min\{13,N-1\}\). Then \(\wdim_{N-t}(Q_d)\) equals
\[
\begin{cases}
N, & t=0,1,\\
N-1, & t=2,3,\\
N-4, & t=4,5\text{ and }3\le d\le 4,\\
N-2, & t=4,5\text{ and }d\ge 5,\\
N-5, & t=6,7\text{ and }3\le d\le 5,\\
N-4, & t=6,7\text{ and }6\le d\le 8,\\
N-3, & t=6,7\text{ and }d\ge 9,\\
N-8, & t=8,9\text{ and }4\le d\le 8,\\
N-6, & t=8,9\text{ and }9\le d\le 16,\\
N-4, & t=8,9\text{ and }d\ge 17,\\
N-9, & t=10,11\text{ and }4\le d\le 8,\\
N-7, & t=10,11\text{ and }9\le d\le 22,\\
N-6, & t=10,11\text{ and }23\le d\le 32,\\
N-5, & t=10,11\text{ and }d\ge 33,\\
N-12, & t=12,13\text{ and }4\le d\le 12,\\
N-10, & t=12,13\text{ and }13\le d\le 16,\\
N-8, & t=12,13\text{ and }17\le d\le 64,\\
N-6, & t=12,13\text{ and }d\ge 65.
\end{cases}
\]
\end{theorem}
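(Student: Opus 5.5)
By Theorem~\ref{thm:parity-pairing}, the deficits \(2L\) and \(2L+1\) have the same value, so it is enough to treat \(t=2L\) for \(L=0,\dots,6\). For each \(L\) the task is to compute \(C_d(t)\) and then to justify the use of Theorem~\ref{thm:reduction}.

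The main route is to determine the thresholds \(\gamma(q,L)\) for \(L<q\le 2L\). Since \(C_d(t)=\max\{q:\gamma(q,L)\ge d\}\), subject to the rows being distinct, each breakpoint in the table should equal one of these capacities:
\begin{itemize}
\item \(L=2\): \(\gamma(4,2)=4\) from the order-\(4\) Hadamard matrix, then \(q=L=2\) for \(d\ge5\).
\item \(L=3\): Theorem~\ref{thm:general}(1) gives \(q\le5\). I expect \(\gamma(5,3)=5\) and \(\gamma(4,3)=A(3,[1,3])=8\), matching the breakpoints \(5\) and \(8\).
\item \(L=4\): \(\gamma(8,4)=8\) by Hadamard, and \(\gamma(6,4)=A(5,[2,4])=16\), which is the even-weight code of length \(5\). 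The values of \(q=7\) and \(q=5\) must be shown to give nothing larger. I expect \(\gamma(7,4)=8\) and \(\gamma(5,4)=A(4,[1,4])=16\), which is the Singleton bound \(2^L\).
\item \(L=5\) and \(L=6\): use the breakpoints \(8,22,32\), and \(12,16,64\). For these I would combine three ingredients: the parity-extension reduction of Theorem~\ref{thm:general}(3), the Singleton bound \(2^L\) for \(q=L+1\), which already gives \(32\) and \(64\), and explicit constructions such as shortened or punctured Hadamard and Reed--Muller codes. For example, the value \(16\) at \(L=6\), \(q=10\) should come from a code of length \(9\) with distances in \([4,6]\).
\end{itemize}

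For the lower bounds in the middle ranges, namely \(\gamma(9,5)\le8\), \(\gamma(8,5)\le22\), \(\gamma(11,6)\le12\), and \(\gamma(10,6)\le16\) with \(\gamma(9,6)\le 16\), I would try Delsarte's linear program first, with the distance set restricted as in Theorem~\ref{thm:general}(3). Where the LP bound is not tight, I would fall back on the finite exhaustive checks that the paper says are in the appendix. I expect this step to be the main obstacle, in particular \(\gamma(8,5)=22\).

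Each construction must also have distinct rows. This needs a check that the chosen columns separate the \(q\) rows in the smallest dimension of each range. Hadamard columns require \(d_0\le\log\)-many columns, which appears to fit the lower ends \(d=3\) or \(4\). The same point explains why \(t\ge8\) begins at \(d=4\): the omitted vertices require \(q\le N\) and \(t\le N-1\).

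Finally, I would check the tail condition \(F_d(r)-rt\ge N-t\) for \(3\le r\le d\), where \(F_d(r)=2^{d-r+1}r\binom{r-1}{\lfloor (r-1)/2\rfloor}\). This holds for large \(d\) by Corollary~\ref{cor:eventual-reduction}. For the finitely many small \((d,t)\), such as \(d=3,4\) with \(t\) near \(N\), I would verify the inequality directly.

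Where the inequality fails, I would argue separately. For the upper bound, the exact omitted-set size from the construction replaces \(t\), and the loss is bounded by \(rq\) instead of \(rt\). For the lower bound, Lemma~\ref{lem:distance-two} applies in all cases, so no tail hypothesis is needed.
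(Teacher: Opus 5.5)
Your overall skeleton (parity pairing, the capacities $\gamma(q,L)$, row separation, the tail hypothesis, and direct checks below it) matches the paper's. However, two steps fail as written.

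\textbf{The $L=5$ capacities.} At $L=5$ you attach the breakpoints to the wrong $q$. The statement gives $N-7$ on $9\le d\le 22$, that is, $C_d(10)=7$. This requires $\gamma(8,5)=A(7,\{3,4,5\})\le 8$. Your target $\gamma(8,5)\le 22$ cannot exclude $q=8$ on that range. The equality $\gamma(8,5)=22$ you expect is also false: the Hamming bound $A(7,3)\le 16$ already rules it out, and the paper's exhaustive clique search gives exactly $8$. This is the one computer-assisted capacity in the theorem, and your plan never aims at it. The breakpoint $22$ belongs to $q=7$ instead. Proving $\gamma(7,5)=A(6,\{2,3,4,5\})=22$ needs both an upper bound and a construction. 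The paper gets the upper bound from the inertia bound for the Cayley graph on $\mathbb F_2^6$ with connection set $\{e_1,\dots,e_6,\mathbf 1\}$. The construction takes the even words of weight below $3$ together with the odd words of weight above $3$, giving $1+15+6$ words. The same problem occurs at $L=6$. You attribute $64$ to $q=L+1$ via Singleton, but the table needs $q=8$ on $17\le d\le 64$. That value is realized by the even-weight code of length $7$, matched by the independent-set bound in $Q_7$. Without it you only get $N-7$ there.

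\textbf{The small-dimension fallback.} The tail hypothesis fails at $(t,d)=(4,3),(4,4),(6,3),(6,4),(6,5),(8,4),(8,5),(10,4),(10,5),(12,4),(12,5),(12,6)$. Replacing $rt$ by $rq$ rescues only $(6,5)$. For example:
\[
F_3(3)-3\cdot4=0<4,\qquad F_4(4)-4\cdot4=8<12,\qquad F_6(4)-4\cdot12=48<52.
\]
The paper instead exhibits specific complements $T$ and computes $\min_{x\ne y}\Delta_{V\setminus T}(x,y)$ directly. Some such case-by-case verification is unavoidable. In low dimension a complement counted by $C_d(t)$ need not be weak resolving at all: at $t=14$, $d=4$ the relaxation gives $13$, but the true value is $12$. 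So for each of the remaining eleven cases you must choose a particular omitted set and check it.
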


The values for \(t=10,11\) use one finite computation,
\(\gamma(8,5)=8\), specified in Appendix~\ref{sec:certs}. The other
capacity bounds below have analytic proofs.

\subsubsection{Analytic capacities at deficit ten}

The nontrivial capacities for \(t=10,11\) are
\[
\gamma(9,5)=A(8,\{4,5\})=8,\qquad \gamma(8,5)=8,\qquad \gamma(7,5)=22.
\]
We first prove the length-eight bound.

\begin{theorem}\label{thm:t10q9}
A binary code of length \(8\) with all pairwise distances in \(\{4,5\}\) has at most \(8\) words. The bound is attained by \(\{0\}\) together with the seven weight-\(5\) vectors whose complementary triples form the Fano plane on seven of the eight coordinates.
\end{theorem}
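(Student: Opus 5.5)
The construction is a direct check, and the upper bound is where the work lies. For the construction, let \(\ell_1,\dots,\ell_7\) be the lines of a Fano plane on coordinates \(1,\dots,7\), and let \(x_i\) be the weight-\(5\) word whose zero set is \(\ell_i\). Then \(d(0,x_i)=5\). Two distinct lines meet in exactly one point, so \(d(x_i,x_j)=|\ell_i\triangle\ell_j|=4\). All pairwise distances therefore lie in \(\{4,5\}\), and the code has \(8\) words.

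For the upper bound I would pass to sign vectors. Map each codeword to \(u\in\{\pm1\}^8\). Distances \(4\) and \(5\) become inner products \(0\) and \(-2\). Suppose a code of size \(M\) exists. Its Gram matrix is
\[
G=8I-2B,
\]
where \(B\) is the adjacency matrix of the graph \(\Gamma\) on the codewords joining pairs at distance \(5\).

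Assume \(M=9\). Since \(G\) is positive semidefinite of rank at most \(8\):
\begin{enumerate}[leftmargin=1.4em]
\item \(\lambda_{\max}(B)\le4\);
\item \(4\) is an eigenvalue of \(B\), with multiplicity at least \(M-8\).
\end{enumerate}
Pairing this with \(\mathbf 1^{\top}G\mathbf 1=\|\sum u_i\|^2\ge0\) gives \(72\ge 4e(\Gamma)\), so \(\Gamma\) has at most \(18\) edges. Since the largest eigenvalue is at least the average degree, the average degree is at most \(4\).

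I would sharpen this using the parity extension of Theorem~\ref{thm:general}(3). Appending a parity bit gives length \(9\) with distances \(\{4,6\}\), so the extended sign vectors have inner products in \(\{1,-3\}\). Their \(9\times9\) Gram matrix is \(G'=J+8I-4B\), and it equals \(UU^{\top}\) for a \(\pm1\) matrix \(U\) of order \(9\). Subtracting the first row of \(U\) from the others shows \(2^8\mid\det U\), hence \(2^{16}\mid\det G'\). On the other hand, \(\det(J+8I-4B)=4^9\det\bigl(2I-B+\tfrac14J\bigr)\), and I would analyse this mod high powers of \(2\) to restrict \(B\) further. Together with the singularity of \(G\) (so \(B\) has eigenvalue \(4\) with eigenvector \(v\) satisfying \(\sum_i v_iu_i=0\)), this should force \(\Gamma\) to be a \(4\)-regular graph on \(9\) vertices, or something close to one.

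Finally I would rule out the surviving graphs combinatorially. Translate a codeword to \(0\); the others then have weights \(4\) or \(5\). Two weight-\(4\) words must meet in exactly \(2\) coordinates. A weight-\(4\) word \(w\) and a weight-\(5\) word \(w'\) satisfy \(d(w,w')=9-2|w\cap w'|\), so they must meet in \(2\) coordinates (distance \(5\)) or \(3\) coordinates (distance \(3\), forbidden); hence they meet in exactly \(2\). Two weight-\(5\) words must meet in exactly \(3\) coordinates. Counting incidences of coordinates with the \(8\) nonzero words, via a Fisher-type double count of pairwise intersections, should contradict having \(8\) such words. I expect this final step to be the main obstacle. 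If the double count does not close, I would fall back on a short case split on the number of weight-\(4\) words, translating to each codeword in turn.
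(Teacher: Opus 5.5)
Your construction is correct, and so are the setup $G=8I-2B$ and conditions (1)--(2). The upper bound, however, is never actually proved. The analysis of $\det G'$ modulo powers of $2$ is only announced. The claim that it forces $\Gamma$ to be (nearly) $4$-regular is unsupported. The closing Fisher-type count is only expected to work, and it does not work on its own. Translate one word to $0$ and let there be $a$ supports of size $4$ and $8-a$ of size $5$. The intersection matrix of the eight supports is then $2I+2J+J_1$, where $J_1$ is the all-ones block on the weight-$5$ words. This matrix is positive definite, so having eight independent supports in $\mathbb R^8$ is no contradiction. The coordinate-degree count with Cauchy--Schwarz gives only $7a^2-48a+64\ge0$, which excludes just $2\le a\le5$. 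As written, nothing rules out a ninth word.

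The missing observation is already implicit in your last paragraph. Since $4$ is even and $5$ is odd, two codewords are at distance $5$ exactly when their weights have opposite parity. Hence $\Gamma$ is the complete bipartite graph $K_{p,9-p}$ between the even-weight and odd-weight codewords, and $B$ has eigenvalues $\pm\sqrt{p(9-p)}$ and $0$. If $p\in\{0,9\}$, then $G=8I$ has rank $9$, which is impossible in $\mathbb R^8$. Otherwise your condition (2) requires $p(9-p)=16$, i.e.\ $p^2-9p+16=0$, whose discriminant $17$ is not a square. This finishes the bound with no parity extension or case split.

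This is the paper's argument in spectral form. After translating to $0$, the paper's classes $S_0,S_1$ are exactly these parity classes. Its obstruction ``$b(a+1)=16$ with $a+b=8$'' is your equation with $p=a+1$. The paper reaches it by projecting the $S_1$ vectors off $\mathbf 1$ and $\mathrm{span}(S_0)$ rather than by diagonalizing $B$.
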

\begin{proof}
Translate so that the code contains \(0\). Remaining weights lie in \(\{4,5\}\). Even-weight and odd-weight vectors have even mutual distance, so all distances inside each weight class equal \(4\). Mapping \(0,1\mapsto +1,-1\), one obtains two sets \(S_0\) (weight \(4\)) and \(S_1\) (weight \(5\)) of pairwise orthogonal vectors in \(\{\pm 1\}^8\), with \(\langle x,1\rangle=0\) for \(x\in S_0\), \(\langle y,1\rangle=-2\) for \(y\in S_1\), and \(\langle x,y\rangle=-2\) across the two sets.

Write \(a=\lvert S_0\rvert\) and \(b=\lvert S_1\rvert\). If \(b=0\), orthogonality to \(\mathbf1\) gives \(a\le7\). Otherwise let \(y'=y+\tfrac14\mathbf{1}\) for \(y\in S_1\). Each \(y'\) lies in \(\mathbf{1}^\perp\). The projection of every \(y'\) onto \(\mathrm{span}(S_0)\) is the same vector \(-\tfrac14\sum S_0\), so the residuals \(w_y\) live in a Euclidean space of dimension \(7-a\). Their Gram matrix is \(8I-\frac{a+1}{2}J\), with eigenvalues \(8\) (multiplicity \(b-1\)) and \(8-b(a+1)/2\). If the last eigenvalue is nonzero then \(b\le 7-a\). If it vanishes then \(b(a+1)=16\) and \(b-1\le 7-a\), hence \(a+b\le 8\); but \(b(a+1)=16\) has no integer solution with \(a+b=8\). Thus \(a+b\le 7\) in every case, so the original code has size at most \(8\).

The Fano construction attains \(8\): seven triples on \(\{0,\dots,6\}\), pairwise intersecting in one point, complementary to seven weight-\(5\) vectors of pairwise intersection \(3\) (distance \(4\)). For example, take the seven triples
\(013,045,125,234,026,146,356\), writing a triple by its entries.
\end{proof}

\begin{lemma}\label{lem:folded}
\(A(6,\{2,3,4,5\})=22\) and \(A(8,\{2,\ldots,7\})=93\).
\end{lemma}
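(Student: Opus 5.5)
The plan is to realize each code by a union of whole weight layers, and then prove optimality with Delsarte's linear programming bound~\cite{Delsarte1973}. First I reformulate the condition. For \(n\in\{6,8\}\), a code \(C\subseteq\{0,1\}^n\) with all distances in \([2,n-1]\) is exactly a code that avoids two configurations: pairs at distance \(1\), and complementary pairs \(\{x,\bar x\}\) at distance \(n\). Equivalently, \(C\) is an independent set in \(Q_n\) with the antipodal matching added, i.e. the folded \((n+1)\)-cube. This explains the name of the lemma.

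\emph{Constructions.} I take unions of weight layers, chosen so that no two layers are at weight difference one and no layer is the complement of another. For length \(6\), I take the weights \(\{0,2,5\}\), giving \(1+15+6=22\) words:
\begin{itemize}[leftmargin=1.4em]
\item distances within a layer are even and positive, hence at least \(2\) and at most \(4\) for weight \(2\), and exactly \(2\) for weight \(5\);
\item distances between layers are at least the weight difference, and the largest are \(5\) (from \(0\) to weight \(5\)) and at most \(5\) (from weight \(2\) to weight \(5\));
\item no complement is present, since the complement layers \(6,4,1\) are unused.
\end{itemize}
For length \(8\), I take the weights \(\{0,2,5,7\}\), giving \(1+28+56+8=93\) words. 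The complementary weights \(8,6,3,1\) are absent and no two chosen weights differ by one, so every distance lies in \([2,7]\). Within the weight-\(5\) layer distances are at most \(6\), and the distance from weight \(2\) to weight \(7\) is at most \(7\).

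\emph{Upper bounds.} Let \((A_0,\dots,A_n)\) be the distance distribution of \(C\), so \(A_0=1\), \(A_1=A_n=0\), and \(A_i\ge0\). Delsarte's inequalities state that \(\sum_i A_iK_j(i)\ge0\) for the Krawtchouk polynomials \(K_j\). I will maximise \(\sum_i A_i\) under these constraints.

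The Hoffman ratio bound on the folded cube only yields about \(26\) for \(n=6\), so the bound must use the constraints more finely. I therefore intend to exhibit explicit dual certificates. For each case this means nonnegative multipliers \(\beta_1,\dots,\beta_n\) such that, for every \(i\in\{2,\dots,n-1\}\),
\[
1+\sum_j\beta_jK_j(i)\le0 .
\]
These certificates give \(|C|\le1+\sum_j\beta_j\binom nj\). I will then check that this bound rounds down to \(22\) and \(93\), respectively.

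A useful simplification comes from symmetry. The complement map preserves the distance set \([2,n-1]\), and \(K_j(n-i)=(-1)^jK_j(i)\). This should let me restrict the search to even \(j\), or to a symmetrised dual. I would then first try \(\beta\) supported on two or three indices and solve the resulting small linear system.

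\emph{Main obstacle.} The main risk is that the pure Delsarte bound is not tight, for instance giving \(23\) or \(94\). In that case I would add the antipodal-free structure explicitly. One option is Schrijver-type or Delsarte–Lovász constraints. The other is a direct layered argument: split \(C\) by weight parity, note that each parity class is a code with no complementary pairs, and bound the interaction between the classes through the distance-one graph between adjacent layers. If even this does not close the gap, the fallback is a finite exhaustive verification of the same kind as specified in Appendix~\ref{sec:certs}. That verification would search the maximum independent set of the folded \(7\)-cube and the folded \(9\)-cube modulo the automorphism group.
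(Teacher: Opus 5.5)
Your constructions are correct and match the paper's (even weights below \(n/2\), odd weights above \(n/2\)). Your reformulation via independent sets in \(\mathrm{Cay}(\mathbb F_2^n,\{e_1,\dots,e_n,\mathbf 1\})\) is also the paper's. The gap is the upper bound: Delsarte's linear program cannot reach \(22\) or \(93\), and it misses by far more than one. Put
\[
A_i=\binom{n}{i}\,\frac{(n-1)+(-1)^i(n+1-2i)}{2n}\qquad(0\le i\le n).
\]
Then \(A_0=1\), \(A_1=A_n=0\), and \(A_i>0\) otherwise. Since \(A_i/\binom{n}{i}=\tfrac{n-1}{2n}+\tfrac{1}{2n}\bigl(K_{n-1}(i)+K_n(i)\bigr)\), the Delsarte sums \(\sum_iA_iK_j(i)\) vanish unless \(j\in\{0,n-1,n\}\), where they are positive. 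Explicitly, this is \((A_2,\dots,A_5)=(10,\tfrac{20}{3},5,4)\) for \(n=6\) and \((A_2,\dots,A_7)=(21,14,35,28,7,6)\) for \(n=8\). The totals are \(\sum_iA_i=2^{n-1}(n-1)/n\), namely \(80/3\) and \(112\).

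By weak duality, no multipliers \(\beta_j\ge0\) of the kind you describe can certify anything better than \(|C|\le26\) and \(|C|\le112\). These are the very ratio-bound values you set aside. This is structural. The Delsarte bound for a set of allowed distances equals Schrijver's \(\vartheta'\) of the graph of forbidden distances, here the folded \((n+1)\)-cube. For this edge-transitive graph, \(\vartheta'\) equals the ratio bound. Your reduction to even \(j\) is not valid either: the reflection \(i\mapsto n-i\) replaces \(\beta_j\) by \((-1)^j\beta_j\), which breaks nonnegativity for odd \(j\).

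The missing idea is a spectral bound not dominated by \(\vartheta\), namely the inertia (Cvetkovi\'c) bound used in the paper. If \(C\) is independent, the adjacency matrix has a zero \(|C|\times|C|\) principal submatrix. Cauchy interlacing then forces at least \(|C|\) nonnegative and at least \(|C|\) nonpositive eigenvalues. The character indexed by \(u\) with \(|u|=j\) has eigenvalue \(n-2j+(-1)^j\), with multiplicity \(\binom{n}{j}\), and none of these is zero. For \(n=6\) the positive eigenvalues have total multiplicity \(1+6+15=22\). For \(n=8\) the negative eigenvalues have total multiplicity \(56+28+8+1=93\). These are exactly the required bounds. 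Your last-resort exhaustive search of the folded \(7\)- and \(9\)-cubes could in principle settle both cases. As written, however, the proposal contains no argument that actually yields the upper bounds.
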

\begin{proof}
For \(n=6,8\), form the Cayley graph on \(\mathbb F_2^n\) with
connection set \(\{e_1,\ldots,e_n,\mathbf1\}\). A permitted code is
an independent set. Its adjacency eigenvalues are
\(n-2j+(-1)^j\), with multiplicity \(\binom nj\).
For \(n=6\), the positive and negative multiplicities are \(22,42\);
for \(n=8\), they are \(163,93\). There are no zero eigenvalues.
The inertia bound, obtained by interlacing with the zero principal
submatrix of an independent set~\cite{Haemers1995}, gives an upper bound of \(22\) and \(93\), respectively.

Take all words of even weight below \(n/2\) and all words of odd weight
above \(n/2\). No two have distance one or \(n\). The sizes are
\(1+15+6=22\) for \(n=6\) and \(1+28+56+8=93\) for \(n=8\).
\end{proof}

\subsubsection{Proof of the endpoint table}\label{subsec:endpointproof}

The following lemma collects every normalized-code capacity needed for
Theorem~\ref{thm:to13}.  Each row lists all \(q\) with \(L<q\le2L\).

\begin{lemma}\label{lem:endpointcapacities}
\[
\begin{array}{c|l}
L&\text{exact capacities}\\ \hline
2&\gamma(4,2)=4,\quad\gamma(3,2)=4\\
3&\gamma(6,3)=2,\quad\gamma(5,3)=5,\quad\gamma(4,3)=8\\
4&\gamma(8,4)=8,\quad\gamma(7,4)=8,\quad
   \gamma(6,4)=\gamma(5,4)=16\\
5&\gamma(10,5)=2,\quad\gamma(9,5)=\gamma(8,5)=8,\quad
   \gamma(7,5)=22,\quad\gamma(6,5)=32\\
6&\gamma(12,6)=\gamma(11,6)=12,\quad
   \gamma(10,6)=\gamma(9,6)=16,\quad
   \gamma(8,6)=\gamma(7,6)=64.
\end{array}
\]
Only \(\gamma(8,5)=8\) is computer-assisted.
\end{lemma}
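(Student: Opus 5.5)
Each entry \(\gamma(q,L)=A(q-1,[q-L,L])\) needs a lower construction and a matching upper bound. I would go through the table row by row, sorting the entries into three kinds: the top entries \(q=2L\) and \(q=2L-1\), which are Hadamard or equidistant; the bottom entries \(q=L+1\) and \(q=L+2\), which follow from Singleton or folded-cube bounds; and the middle entries, which need individual arguments. The expected values can be read off from Theorem~\ref{thm:to13}.

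\textbf{Top entries.} For \(L=2,4,6\), Theorem~\ref{thm:general}(2) gives \(\gamma(2L,L)=\gamma(2L-1,L)=2L\) from Hadamard matrices of orders \(4,8,12\). For odd \(L=3,5\) and \(q=2L\), the permitted interval collapses to the single distance \(L\), which is odd. Three words at pairwise odd distance are impossible, so \(\gamma(2L,L)=2\). For \(\gamma(5,3)=A(4,\{2,3\})=5\), use \(0\) together with the four weight-\(3\) words (pairwise distance \(2\), distance \(3\) from \(0\)) for the lower bound. For the upper bound, normalize to \(0\) and split the remaining words by weight parity, then apply the Gram-matrix projection argument of Theorem~\ref{thm:t10q9} in \(\pm1\) form. \(\gamma(9,5)=8\) is Theorem~\ref{thm:t10q9}, and \(\gamma(8,5)=8\) is the appendix computation.

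\textbf{Bottom entries.} For \(q=L+1\), the interval is \([1,L]\) on length \(L\), so every distinct word is allowed and \(\gamma=2^L\). This gives \(4,8,16,32,64\) for \(\gamma(3,2),\gamma(4,3),\gamma(5,4),\gamma(6,5),\gamma(7,6)\), matching the Singleton bound in Theorem~\ref{thm:stabilization}. For \(q=L+2\), the interval \([2,L]\) on length \(L+1\) excludes distances \(1\) and \(L+1\), so the code is an independent set in the folded-cube Cayley graph of Lemma~\ref{lem:folded}. This gives \(\gamma(7,5)=22\) and \(\gamma(6,4)=A(5,\{2,3,4\})\). For length \(5\), the inertia computation with eigenvalues \(5-2j+(-1)^j\) should give \(16\), attained by the even-weight code. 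Likewise \(\gamma(8,6)=A(7,[2,6])\) should be \(64\), attained by the even-weight code, and I would confirm this with the same Cayley-graph inertia bound.

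\textbf{Middle entries.} These are \(\gamma(10,6)=A(9,\{4,5,6\})\) and \(\gamma(9,6)=A(8,[3,6])\), both claimed to equal \(16\).

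For \(A(9,\{4,5,6\})\), I would parity-extend as in Theorem~\ref{thm:general}(3) to length \(10\) with distances in \(\{4,6\}\). The lower bound can come from a shortened or extended Hadamard-type code, or from the Nordstrom–Robinson/first-order Reed–Muller structure, such as \(RM(1,4)\) minus its complements adjusted to allowed distances. The upper bound \(16\) would come from a Delsarte LP on the Hamming scheme \(H(10,2)\) with distances restricted to \(\{4,6\}\), and I would try to exhibit an explicit dual certificate, that is, a polynomial in Krawtchouk form.

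For \(A(8,[3,6])\), the lower bound should come from puncturing or shortening the same 16-word structure. The upper bound would again use a Delsarte LP, now forbidding distances \(1,2,7,8\).

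The main obstacle is finding these explicit LP dual certificates, or a rank/Gram argument, that give exactly \(16\) rather than a fractional value above \(16\). If the LP is not tight, I would fall back on splitting by weight parity, as in Theorem~\ref{thm:t10q9}, and bounding each class by an orthogonality or rank argument.
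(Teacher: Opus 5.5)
Most of your plan is sound and follows the paper. The pieces that match are the Hadamard and equidistant rank bounds for \(q=2L,2L-1\), the odd-distance obstruction for \(\gamma(6,3)\) and \(\gamma(10,5)\), all words for \(q=L+1\), and Theorem~\ref{thm:t10q9}, Lemma~\ref{lem:folded} and the appendix for \(L=5\). Your Gram-projection route to \(\gamma(5,3)\le5\) differs from the paper's count of pairwise-intersecting edges and compatible triples, but it works. The residual Gram matrix is \(4I-(a+1)J\) in a space of dimension \(3-a\), which gives \(a+b\le4\). Here the degenerate case \(b(a+1)=4\), e.g.\ \(a=0,b=4\), is attained rather than excluded.

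The step for \(\gamma(8,6)=A(7,[2,6])\) fails as stated. For \(n=7\) the eigenvalues \(7-2j+(-1)^j\) vanish at \(j=3,4\). So the folded \(7\)-cube has eigenvalue \(0\) with multiplicity \(70\), and only \(29\) positive and \(29\) negative eigenvalues. The inertia bound is therefore \(128-29=99\), not \(64\). Use the paper's elementary bound instead. Distance one is forbidden, so the code is an independent set of \(Q_7\) and contains at most one end of each edge in a fixed coordinate direction, giving \(64\). Hoffman's ratio bound \(128\cdot8/(8+8)=64\) also works.

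The larger gap is that the two middle entries \(\gamma(10,6)=\gamma(9,6)=16\) are not actually proved.

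For the lower bound, \(RM(1,4)\) without complements is an equidistant code of length \(16\), and you must say which coordinates to remove. Not every puncturing of the \([15,4,8]\) simplex code keeps the weights in \(\{4,6\}\). A choice that works is to delete the six points of two skew lines of \(\mathrm{PG}(3,2)\). Every plane meets this six-set in \(2\) or \(4\) points, so the remaining nine columns give a \([9,4]\) code with nonzero weights \(4\) and \(6\). This is exactly the paper's span of \((15,60,325,401)\), taken from Roth--Seroussi's Example~4.2. One further puncture gives \(\gamma(9,6)\ge16\).

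For the upper bounds, the certificates you expected to be the main obstacle exist and are small. In the normalization of Lemma~\ref{lem:t14delsarte}, \(g=\tfrac13K_2^{(10)}\) satisfies \(g(4)=g(6)=-1\) and \(g(0)=15\), so \(A(10,\{4,6\})\le16\) after parity extension. Likewise \(g=\tfrac13\bigl(K_1^{(8)}+K_2^{(8)}+K_7^{(8)}+K_8^{(8)}\bigr)\) equals \(-1\) on \(\{3,4,5,6\}\) with \(g(0)=15\), so \(A(8,[3,6])\le16\) directly. The paper obtains these two bounds by citing Roth--Seroussi Proposition~4.3(i),(ii). Without explicit constructions and certificates of this kind, the \(L=6\) row of the lemma remains open in your proposal.
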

\begin{proof}
For \(L=2\), the first entry is the equidistant rank bound in length three,
and the second consists of all four length-two words.  For \(L=3\), three
binary words cannot have all three pairwise distances odd, proving
\(\gamma(6,3)=2\).  For \(q=5\), after translating to contain zero, the
nonzero supports have size two or three.  The size-two supports are a
pairwise-intersecting edge family on four points.  If there is one such edge,
at most two triples are compatible with it; with at least two edges, at most
one triple is compatible.  Hence there are at most four nonzero words, and a
five-word example attains the bound.  All eight length-three words give the
last entry.

For \(L=4\), the \(q=8\) entry is the equidistant rank bound, attained by an
order-eight Hadamard matrix.  For \(q=7\), parity extension gives a length-seven
equidistant code of distance four, again of size at most eight.  For \(q=6\),
the allowed words form an independent set of \(Q_5\), so the disjoint edges in
one coordinate direction give at most \(16\), attained by a parity class;
for \(q=5\), all \(16\) length-four words are allowed.

For \(L=5\), the odd-distance argument gives \(\gamma(10,5)=2\), and
Theorem~\ref{thm:t10q9} gives \(\gamma(9,5)=8\).  Appendix~\ref{sec:certs} gives the finite computation for
\(\gamma(8,5)=8\), and Lemma~\ref{lem:folded} gives \(\gamma(7,5)=22\).
All \(32\) length-five words give \(\gamma(6,5)=32\).

For \(L=6\), the first entry follows from the equidistant rank bound and an
order-12 Hadamard matrix.  Parity extension proves the same upper bound for
\(q=11\), with equality by puncturing the Hadamard code.  For \(q=10\), parity
extension changes the allowed distances to \(4,6\), and
\cite[Proposition~4.3(i)]{RothSeroussi2007} gives
\(4\cdot6\cdot10/(5^2-10)=16\); the linear \([9,4]\) code in
\cite[Example~4.2]{RothSeroussi2007} attains it.  Puncturing gives the
\(q=9\) value, with the matching bound from
\cite[Proposition~4.3(ii)]{RothSeroussi2007}.  Finally, an independent set of
\(Q_7\) has size at most \(64\), attained by a parity class, while all \(64\)
length-six words are allowed for \(q=7\).
\end{proof}

\begin{proof}[Proof of Theorem~\ref{thm:to13}]
The case \(t=0,1\) follows from Lemma~\ref{lem:distance-two}, since \(L=0\).  For
\(t=2L,2L+1\), Lemma~\ref{lem:endpointcapacities}, the three-column parity
obstruction, and the definition of \(C_d(t)\) give
\begin{equation}
\begin{array}{c|l}
L&C_d(2L)=C_d(2L+1)\\ \hline
1&1\\
2&4\ (3\le d\le4),\quad2\ (d\ge5)\\
3&5\ (3\le d\le5),\quad4\ (6\le d\le8),\quad3\ (d\ge9)\\
4&8\ (4\le d\le8),\quad6\ (9\le d\le16),\quad4\ (d\ge17)\\
5&9\ (4\le d\le8),\quad7\ (9\le d\le22),\quad
  6\ (23\le d\le32),\quad5\ (d\ge33)\\
6&12\ (4\le d\le12),\quad10\ (13\le d\le16),\quad
  8\ (17\le d\le64),\quad6\ (d\ge65).
\end{array}\label{eq:capacity-table}
\end{equation}
The constructions in Lemma~\ref{lem:row-separation} below supply a
separating prefix for each required capacity. Thus every entry of
\eqref{eq:capacity-table} satisfies the distinct-row condition.

Direct substitution in Theorem~\ref{thm:fullcube} shows that the tail
hypothesis of Theorem~\ref{thm:reduction} holds for even deficits
\(t=2,4,6,8,10,12\) beginning at dimensions \(3,5,6,6,6,7\), respectively.
For larger dimensions this propagates using \(F_d(r)\ge3N/2\); after the few
boundary cases, \(2^{d-1}\ge t(d-1)\) is sufficient and is preserved when
\(d\) increases.  The dimensions before those thresholds are checked
directly from the original definition by the following complements:
\[
\begin{array}{c|c|l|c}
t&d&T&\min_{x\ne y}\Delta_{V\setminus T}(x,y)\\ \hline
4&3&\{0,2,4,6\}&4\\
4&4&\{0,6,10,12\}&12\\
6&3&\{0,1,2,4,6\}&2\\
6&4&\{0,2,4,8,14\}&10\\
6&5&\{0,14,18,20,24\}&26\\
8&4&\{0,2,4,6,8,10,12,14\}&8\\
8&5&\{0,6,10,12,16,22,26,28\}&24\\
10&4&\{0,3,4,7,9,10,13,14,15\}&6\\
10&5&\{0,3,4,7,9,10,13,14,15\}&22\\
10&6&\{0,3,13,14,36,39,41,42,47\}&54\\
12&4&\{1,2,4,7,8,9,10,11,12,13,14,15\}&4\\
12&5&\{1,2,4,23,8,25,26,11,28,13,14,15\}&20\\
12&6&\{1,2,36,23,8,25,58,43,28,45,14,15\}&52.
\end{array}
\]
Each minimum equals \(N-t\).  The same complement meets the weaker requirement
for the odd partner \(2L+1\), and the distance-two upper bound is identical.
Combining \eqref{eq:capacity-table}, Theorem~\ref{thm:reduction}, and the displayed finite checks
gives every line of the theorem.
\end{proof}

\Needspace{6\baselineskip}
\begin{lemma}\label{lem:row-separation}
Every construction needed in \eqref{eq:capacity-table} can be ordered so
that the first applicable prefix distinguishes the omitted rows.
\end{lemma}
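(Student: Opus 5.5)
We will go through the entries of \eqref{eq:capacity-table} one value of \(L\) at a time. For each \(L\) and each capacity value \(q\) with \(q>L\), we fix an explicit normalized code that attains the capacity in Lemma~\ref{lem:endpointcapacities}. We then reorder its words, which are the columns of the omitted-vertex matrix, so that a short prefix already separates the \(q\) rows. The other words are appended afterwards.

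The working principle is that column order never affects the distance constraints~\eqref{eq:d2}. These constraints are pairwise, so any subset of the code is admissible, and we only need a prefix of length at most the least dimension at which that row of the table is claimed. Rows of the \(q\times d\) matrix are distinguished by a set of columns if and only if, viewing each column as a function on rows, the columns together separate every pair of rows. We choose the prefix greedily: at each step take a column that splits the largest remaining class of indistinguishable rows.

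A counting remark shows this is usually easy. A prefix of \(j\) columns separates at most \(2^j\) rows, so separating \(q\) rows needs \(j\ge\lceil\log_2 q\rceil\). Every claimed lower dimension in \eqref{eq:capacity-table} is \(3\), \(4\), or at least \(5\), and these cover \(q\le 12\) with room to spare. The row-separation requirement is therefore only tight at the entries beginning at \(d=4\) with \(q=9\) and \(q=12\), and at the \(q=5\), \(d=3\) entry.

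For the code families we argue as follows.
\begin{enumerate}
\item \emph{Hadamard-type codes} (\(q=2L\) with \(L=2,4,6\), and their punctured or shortened versions at \(q=2L-1\)). The columns of a Sylvester or Paley matrix, read as row functions, are characters or their analogues. For Sylvester order \(8\) the coordinate characters \(e_1,e_2,e_3\) separate all rows, giving \(d_0=3\le4\). For order \(12\) we exhibit four columns directly by listing the Paley matrix. For the \(q=2L-1\) codes, the normalized zero row is included and the same prefix works after deleting one row.
\item \emph{Full-space codes} (\(\gamma(q,L)=2^{q-1}\), for example all length-\(L\) words when \(q=L+1\)). Ordering the unit vectors first gives the identity matrix plus the zero row, which separates all \(q\) rows.
\item \emph{Parity-class codes} (\(q=6,L=4\) and \(q=8,L=6\)). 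The even-weight class contains \(e_i+e_{i+1}\), and these separate the rows.
\item \emph{Sporadic codes}: the Fano code of Theorem~\ref{thm:t10q9}, the \(\gamma(8,5)\) code, the Roth--Seroussi \([9,4]\) code with its punctures, the 22-word and 93-word folded codes, and the five-word code for \(L=3\). For each we write down an explicit ordering and check that the first \(\lceil\log_2 q\rceil\) or \(d_{\min}\) columns give pairwise distinct row patterns.
\end{enumerate}

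The main obstacle is the sporadic entries where the prefix length is tight. These are \(q=9\) at \(d=4\) for \(L=5\), which needs 4 columns to separate 9 rows, and \(q=12\) at \(d=4\) for \(L=6\), which needs 4 columns to separate 12 rows. In these cases the greedy choice may fail, and a valid prefix has to be produced by inspection or small search. For instance, for \(q=12\) we would take four Hadamard columns whose row patterns are twelve distinct elements of \(\{0,1\}^4\).

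For \(L=5\), \(q=9\), \(d=4\), the table entry is not achieved by the \(\gamma(9,5)\) code restricted to \(d\le8\) alone. We would instead check the omitted sets already listed in the finite table, such as \(T=\{0,3,4,7,9,10,13,14,15\}\), whose rows are distinct by inspection. We then extend that set consistently with the Fano code. If no ordering works at the tight dimension, the fallback is to note that the finite-check table in the proof of Theorem~\ref{thm:to13} already covers those small dimensions directly, so only the prefix for the first dimension past the direct checks needs to separate the rows.
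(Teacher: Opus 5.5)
There is a genuine gap. Your outline has the right overall shape, and the easy families are handled correctly: unit vectors first for full-space codes, and the path columns $e_i+e_{i+1}$ do separate the rows of the parity classes. But the lemma is a finite existence claim, and at exactly the entries where a witness is needed you give none. The five-word $L=3$ code at $d=3$, the $L=5$, $q=9$ code at $d=4$, the $L=6$, $q=12$ code at $d=4$, and the $L=6$, $q=10$ code at $d=13$ are all deferred to ``inspection or small search''. The paper supplies explicit data for each of these:
\begin{itemize}
\item the code $(3,5,9,0,14)$;
\item the ordering $(244,206,217,186,0,227,173,151)$;
\item twelve explicit length-twelve words;
\item the generators $(15,60,325,401)$, whose coordinate patterns are distinct and nonzero.
\end{itemize}

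Worse, the method you indicate for $q=12$ fails. Suppose the Hadamard matrix is listed in normalized form, or the code is translated to contain the zero word with the zero row adjoined. Then the omitted matrix has a constant column, and every other column is at distance six from it, hence balanced. Four pairwise orthogonal balanced $\pm1$ columns never have twelve distinct rows. The four missing patterns in $\{\pm1\}^4$ would then form a $4\times4$ array whose columns are balanced and pairwise orthogonal. That would give four nonzero pairwise orthogonal vectors in the three-dimensional space $\mathbf 1^\perp\subset\mathbb R^4$, which is impossible. Using the constant column instead leaves at most eight patterns. You need a translate with no constant column, which is why the paper lists its words ``without adjoining a row''; its column $4080$ has weight eight.

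For $L=5$, $q=9$, your claim that the $\gamma(9,5)$ code does not attain the entry is false. Take four Fano lines whose three complementary lines form a triangle. The triangle's vertices, the third points of its sides, and the remaining point lie on exactly one, two and three chosen lines, respectively. These incidence sets are pairwise distinct, nonempty and proper. So with the all-one eighth coordinate and the zero row you get nine distinct patterns.

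Your fallback to the direct checks in the proof of Theorem~\ref{thm:to13} does not rescue this. It would only give a separating prefix from $d=7$, not the first applicable prefix $d=4$ that the lemma asserts.
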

\begin{proof}
For the order-four and order-eight Hadamard arrays, use the binary arrays
\((u\cdot v)_{u,v\in\mathbb F_2^a}\), with \(a=2,3\), respectively.
Put the \(a\) unit-vector columns first. These distinguish all rows;
every pair of columns has distance \(2^{a-1}\).
For \(L=3,q=5\), use the normalized code \((3,5,9,0,14)\) of
length four. Its first three columns distinguish the five rows.

Whenever all words of length \(s\) are allowed, put the \(s\) unit
vectors first; these distinguish the \(s\) coordinate rows and the
adjoined zero row. For the even-parity code of odd length \(s\), put
\(e_i+e_s\), \(1\le i<s\), first. The resulting rows are the zero
row, the \(s-1\) unit rows and the all-one row, hence are distinct.
For the folded-cube construction of length six in Lemma~\ref{lem:folded},
the five columns \(e_i+e_6\) also belong to the code and distinguish
the seven rows. Its applicable range starts at \(d=9\).

For \(L=5,q=9\), the normalized code
\[
(244,206,217,186,0,227,173,151)
\]
has distances \(4,5\) and a separating four-column prefix. For
\(L=6,q=12\), use the following columns as length-twelve words,
without adjoining a row:
\begin{align*}
&(2729,3274,3852,4080,360,708,\\
&\qquad922,1212,1570,2064,2470,2686).
\end{align*}
Every pair has distance six, and its first four columns distinguish
the twelve rows. For \(L=6,q=10\), use the linear span of
\((15,60,325,401)\) in length nine, with these generators first.
Their coordinate patterns are distinct and nonzero, so adjoining the
zero row gives ten distinct rows. The nonzero weights of this code
are \(4,6\), as in \cite[Example~4.2]{RothSeroussi2007}.
Adding further columns preserves row separation. When \(q=L\), choose
any \(L\) distinct vertices instead.
\end{proof}

\subsection{Deficits fourteen and fifteen}

Here \(L=7\), so \(q\le14\). Parity forbids \(q=14\) for \(d\ge3\).

\begin{theorem}\label{thm:g13}
\(\gamma(13,7)=A(12,\{6,7\})=13\).
\end{theorem}
\begin{proof}
A construction of size \(13\) is the explicit code
\begin{align*}
&(3357,3749,884,1454,915,0,2537,\\
&\qquad 1752,3906,1139,591,2262,2618)
\end{align*}
in length \(12\).  Its \(78\) unordered pairs have distances \(6\) and \(7\), with
multiplicities \(42\) and \(36\). The first five columns distinguish all
thirteen rows after adjoining the zero row.

For the upper bound, translate to contain \(0\). Remaining weights are \(6\) or \(7\). The associated \(\{\pm 1\}^{12}\) vectors split into even-weight and odd-weight orthogonal sets \(S_0,S_1\), with cross inner products \(-2\), with \(S_0\perp\mathbf{1}\), and with \(\langle y,\mathbf{1}\rangle=-2\) for \(y\in S_1\). Put \(a=|S_0|\) and \(b=|S_1|\). If \(b=0\), the bound follows from orthogonality; assume \(b>0\). Eliminating the \(12I_a\) block from the Gram matrix of \(S_0\cup S_1\cup\{\mathbf{1}\}\) leaves \(12\) on the \((b-1)\)-dimensional subspace of \(S_1\)-coefficients summing to zero and the two-dimensional block
\[
\begin{pmatrix}12-ab/3&-2\sqrt b\\-2\sqrt b&12\end{pmatrix},
\]
whose determinant is \(4(36-b(a+1))\). Thus the Gram rank is \(a+b+1\), except when \(b(a+1)=36\), when it is \(a+b\). If \(a+b\ge13\), either rank is greater than \(12\), impossible for vectors in \(\mathbb{R}^{12}\). Hence at most \(12\) nonzero words, and the code has size at most \(13\).
\end{proof}

\begin{theorem}\label{thm:g12}
\(\gamma(12,7)=A(11,\{5,6,7\})=13\).
\end{theorem}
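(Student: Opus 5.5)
The statement has two halves: a lower bound of $13$ by construction, and an upper bound of $13$. The link to Theorem~\ref{thm:g13} is parity extension, as in Theorem~\ref{thm:general}(3) with $L=7$ odd. Appending an overall parity bit turns a length-$11$ code with distances in $\{5,6,7\}$ into a length-$12$ code. Odd distances $5,7$ become $6,8$ when the two words have different weight parity, and distance $6$ stays $6$. So the image has distances in $\{6,8\}$, which is not the $\{6,7\}$ of Theorem~\ref{thm:g13}. The upper bound therefore needs its own Gram-matrix argument, in the same style.

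\textbf{Lower bound.} Puncture the explicit size-$13$ code of Theorem~\ref{thm:g13} in one coordinate. Distances in $\{6,7\}$ drop to $\{5,6,7\}$, and no two words collide, since every distance is at least $6>1$. This gives $A(11,\{5,6,7\})\ge13$. For use in \eqref{eq:capacity-table} we also need distinct rows. Choose the punctured coordinate outside the first five columns; the separating five-column prefix of Theorem~\ref{thm:g13} then survives, as in Lemma~\ref{lem:row-separation}.

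\textbf{Upper bound.} Parity-extend to length $12$, so the code is even-weight with all distances in $\{6,8\}$. Translate so that $0$ is a codeword. Every other word then has weight $6$ or $8$, and any two nonzero words $u,v$ have $|u\cap v|=(|u|+|v|-d)/2$.

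Pass to $\pm1$ vectors in $\mathbb R^{12}$. Inner products are $12-2d\in\{0,-4\}$, and inner products with $\mathbf 1$ are $0$ for weight-$6$ words and $-4$ for weight-$8$ words. Since all pairwise inner products are nonpositive, the set of nonzero words together with $\mathbf1$ is (nearly) an obtuse system. The standard fact is that vectors in $\mathbb R^n$ with pairwise nonpositive inner products number at most $2n$. This bound is too weak on its own, so sharpen it as follows:
\begin{itemize}
\item Orthogonality to $\mathbf1$ confines the weight-$6$ words, call them $S_0$, to $\mathbf1^\perp$, which has dimension $11$.
\item Distance $8$ between two weight-$6$ words forces intersection $2$; distance $6$ forces intersection $3$. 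Among weight-$8$ words, distance $8$ forces intersection $4$.
\end{itemize}
Then project the weight-$8$ vectors $S_1$ as in Theorem~\ref{thm:t10q9}, via $y'=y+\tfrac13\mathbf1\in\mathbf1^\perp$, and run a rank argument on the Gram matrix of $S_0\cup S_1\cup\{\mathbf1\}$. The goal is at most $12$ nonzero words.

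\textbf{Main obstacle.} The inner products are no longer constant: both $0$ and $-4$ occur. The Gram matrix is therefore not of the form $cI-\lambda J$, and the eigenvalue computation of Theorem~\ref{thm:g13} does not go through. My first attempt would be the inertia/positive-semidefinite bound. Write $G=12I-4A$, where $A$ is the graph of "distance $8$" pairs. Rank at most $12$ forces the eigenvalue $3$ of $A$ to have multiplicity at least $|C|-12$, where $C$ is the set of nonzero words, while $\lambda_{\max}(A)\le3$. Combining this with the trace identities $\operatorname{tr}A=0$ and $\operatorname{tr}A^2=2|E|$ should bound $|C|$.

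If that fails, I would fall back on Delsarte's LP, cited as \cite{Delsarte1973}, for the distance set $\{5,6,7\}$ in length $11$. If the LP bound exceeds $13$, the last resort is a finite exhaustive search of the kind in Appendix~\ref{sec:certs}: fix $0$ and a maximal clique structure and use symmetry breaking.
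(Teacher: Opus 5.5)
Your lower bound is sound. Puncturing the thirteen-word code of Theorem~\ref{thm:g13} gives distances in $\{5,6,7\}$ with no collisions, which is a valid alternative to the paper's explicit length-$11$ code. (A punctured coordinate is a row of the omitted-vertex matrix, not a column, so any coordinate may be deleted and the five-column prefix still separates the remaining rows.)

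Your upper-bound setup matches the paper's starting point: parity extension to distances $\{6,8\}$, $G=12I-4A$, $\lambda_{\max}(A)\le 3$ from positive semidefiniteness, and eigenvalue $3$ of multiplicity at least $2$ from rank at most $12$. Note that you must take the Gram matrix of all fourteen words, including $\mathbf 1$, and not only the nonzero ones, because multiplicity two is exactly what is needed.

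The gap is the finishing step. No combination of $\lambda_{\max}(A)\le 3$, that multiplicity, and trace identities can give a contradiction, because these conditions are realizable. Let $A$ be the adjacency matrix of two disjoint copies of $K_{3,3}$ plus two isolated vertices. Its spectrum is $3,3,-3,-3,0^{(10)}$, so $12I-4A$ is positive semidefinite of rank $12$ with diagonal $12$. It is therefore a genuine Gram matrix of fourteen vectors in $\mathbb{R}^{12}$ with the required norms and inner products, and the graph is even bipartite. The contradiction must use that the vectors are $\pm1$ vectors.

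Your fallbacks do not close the gap either. The plain Delsarte LP in length $11$ with distances $\{5,6,7\}$ admits the feasible distance distribution with entries $4.2,6.2,3$ at distances $5,6,7$, so it gives only $\lfloor 14.4\rfloor=14$. The exhaustive search is left unspecified.

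The missing idea, as in the paper, is to pass from spectrum to structure.
\begin{enumerate}
\item Distance-$8$ pairs were distance-$7$ pairs before extension, so the graph is bipartite, with parts given by the appended bit.
\item A connected graph has a simple top eigenvalue, so at least two components have spectral radius $3$.
\item For such a component, the Perron vector is a null vector of $G$, i.e.\ a vanishing combination of the sign vectors. Reading the appended coordinate gives $\sum u=\sum v$.
\item Summing $3u=Bv$ and $3v=B^{\mathsf T}u$ shows both parts have size at least $3$, and size $3$ forces $K_{3,3}$. Since $8+8>14$, some component is a $K_{3,3}$.
\item Its six sign vectors have a rigid form: three unanimous coordinates and a $3\times3$ array of exceptional positions.
\item Every other codeword is orthogonal to all six. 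The row and column sums of its $3\times3$ sign array must all equal $-z$, where $z$ is the sum of its first three entries.
\item This allows at most one word with $z=\pm3$ and at most one word for each of the six permutation matrices. That gives at most $7$ further words and at most $13$ in total.
\end{enumerate}
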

\begin{proof}
The length-\(11\) code
\[
(0,31,227,469,718,807,888,1142,1355,1452,1605,1721,1938)
\]
has the required distances and gives the lower bound.

Suppose a code with fourteen words exists. Append a parity coordinate and
map its words to sign vectors in \(\{\pm1\}^{12}\). Their distances are
\(6\) or \(8\), so their Gram matrix is \(G=12I-4A\), where \(A\)
is the adjacency matrix of the graph of distance-eight pairs. Such a pair
had distance seven before extension, so this graph is bipartite, with
parts given by the appended coordinate. Since \(G\) is positive
semidefinite of rank at most twelve, \(A\) has spectral radius at most
three and eigenvalue three with multiplicity at least two.

Consequently at least two connected components have spectral radius
three. In any such component, let \(u,v\) be the positive parts of a
Perron eigenvector. The corresponding null vector of \(G\) gives a zero
linear combination of the sign vectors. Reading the appended coordinate
shows \(\sum u=\sum v\). If the two part sizes are \(a,b\), summing
\(3u=Bv\) and \(3v=B^{\mathsf T}u\), where \(B\) is the bipartite
adjacency block, gives \(a,b\ge3\). If \(a=3\), equality forces every
vertex in the other part to have degree three. The component is
\(K_{3,b}\), whose spectral radius is \(\sqrt{3b}\), hence \(b=3\).
Thus each such component is either \(K_{3,3}\) or has at least eight
vertices. Two components on fourteen vertices force a \(K_{3,3}\).

Write its two sets of three sign vectors as \(x_1,x_2,x_3\) and
\(y_1,y_2,y_3\). The vectors within each part are orthogonal and
\(\langle x_i,y_j\rangle=-4\). Its null vector gives
\(\sum_i x_i+\sum_j y_j=0\). Flip coordinates so that the majority of
the three \(x\)-entries is positive in every coordinate. The identity
\(\|x_1+x_2+x_3\|^2=36\) shows that exactly three coordinates are
unanimous among the \(x_i\). On these coordinates all \(x_i\) are
positive and all \(y_j\) negative. On the other nine, exactly one
\(x_i\) is negative and exactly one \(y_j\) positive. Orthogonality
within each part makes each exceptional index occur three times; the
cross inner products then show that each ordered pair \((i,j)\) occurs
once. Index those nine coordinates by a \(3\times3\) array.

Every remaining sign vector is orthogonal to these six, since they form
a connected component. Let \(z\) be the sum of its first three entries,
and let \(Y\) be its remaining \(3\times3\) sign array. The six
orthogonality equations say that every row and column sum of \(Y\)
equals \(-z\). If \(z=\pm3\), there are just two possibilities, which
are antipodal, so at most one can occur. If \(z=1\), then
\(Y=2P-J\) for a permutation matrix \(P\); if \(z=-1\), then
\(Y=J-2P\). For a fixed \(P\), two distinct such vectors have distance
at most two when their arrays agree, and at least ten when their arrays
are opposite. Neither is allowed. The six permutation matrices therefore
contribute at most six vectors. There are at most seven further vectors
in total, contradicting the assumed fourteen words.
\end{proof}

\begin{theorem}\label{thm:g11}
\(\gamma(11,7)=A(11,\{4,6\})=17\).
\end{theorem}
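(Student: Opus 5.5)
The plan is to prove the two inequalities \(\gamma(11,7)\ge17\) and \(\gamma(11,7)\le17\) separately, and to use the parity-extended form \(A(11,\{4,6\})\) as the main tool for the upper bound. Here \(q=11\) and \(L=7\), so a normalized column code has length \(q-1=10\) and all pairwise distances in \([4,7]\). Appending an overall parity coordinate sends distances \(4,5,6,7\) to \(4,6,6,8\). So the extended code is an even-weight code of length \(11\) with distances in \(\{4,6,8\}\). Conversely, puncturing the parity coordinate recovers the original code.

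The first step is to show that distance \(8\) does not enlarge the extremal size, so that the capacity agrees with \(A(11,\{4,6\})\). I would first try the complement trick available in odd length. Replacing an extended word \(x\) by \(\mathbf1+x\) changes a distance \(\delta\) to \(11-\delta\). Because all words have even weight, I would instead work modulo the pair \(\{x,\mathbf 1+x\}\) and check whether the pairs at distance \(8\) can be removed, or rerouted, without losing words. If this local argument does not close cleanly, I would bypass it and bound the \(\{4,6,8\}\) even code directly by the method in the next step.

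The second step, the upper bound \(17\), is the main obstacle. I would set up Delsarte's linear program~\cite{Delsarte1973} for even binary codes of length \(11\) with distance distribution supported on \(\{4,6,8\}\). Equivalently, I would use the length-\(10\) program with support \([4,7]\), and I would add the symmetry constraint coming from the fact that all weights are even. I expect the LP optimum to be slightly above \(17\), perhaps \(17\) plus a fraction. In that case, integrality of the distance distribution \(A_4,A_6,A_8\) and of \(|C|\) should give \(|C|\le17\). If the plain LP is too weak, I would follow the method of Theorems~\ref{thm:t10q9} and~\ref{thm:g13}. Since weights \(4\) and \(8\) are both even, I would pass to sign vectors in \(\{\pm1\}^{11}\), whose pairwise inner products lie in \(\{3,-1,-5\}\). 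Then I would argue via the Gram matrix, which has rank at most \(11\), together with an interlacing or inertia argument as in Lemma~\ref{lem:folded}. As a last resort, I would reduce to a finite exhaustive check of the kind specified in Appendix~\ref{sec:certs}.

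The third step is the lower bound. I would exhibit an explicit \(17\)-word normalized code of length \(10\), written in binary expansions like the codes in Theorems~\ref{thm:g13} and~\ref{thm:g12}. The natural source is a good \(\{4,6\}\) code of length \(11\) with all weights even, for instance a subcode of a shortened or extended Golay-type code, read after deleting its parity coordinate. For this code I would list the distance multiplicities to certify that every distance lies in \([4,7]\). I would also name a short prefix of columns that distinguishes the \(11\) rows after adjoining the zero row, so that Lemma~\ref{lem:row-separation} extends to this capacity.
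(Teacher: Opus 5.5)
Your proposal has genuine gaps in all three steps, and the first is the decisive missing idea. Plain parity extension does not give the equality \(\gamma(11,7)=A(11,\{4,6\})\). It lands in the class of even-weight length-\(11\) codes with distances in \(\{4,6,8\}\), and that class is much larger than the image. For example, an even \([11,6,4]\) code, such as a shortened extended Hamming code, has at most one word of weight \(10\), since two such words are at distance \(2\). A \(5\)-dimensional subcode avoiding that word has \(32\) words with all distances in \(\{4,6,8\}\). So your claim that distance \(8\) ``does not enlarge the extremal size'' is false for the class you define, and your fallback of bounding that class by \(17\) cannot succeed. The image is only the subclass in which distance-\(4\) pairs agree and distance-\(8\) pairs differ in the appended bit.

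The paper captures this by complementing in length ten rather than eleven. It sends an even-weight \(x\) to \((x,0)\) and an odd-weight \(x\) to \((\overline{x},1)\). Same-parity distances are preserved, and an opposite-parity distance \(\delta\) becomes \(11-\delta\), so \(\{4,5,6,7\}\) becomes exactly \(\{4,6\}\). Conversely, a \(\{4,6\}\)-code translated to contain \(0\) has only even weights, so the map inverts. The same issue breaks your lower-bound recipe. Deleting the last coordinate of an even \(\{4,6\}\)-code turns a distance-\(4\) pair that differs there into distance \(3\). Words ending in \(1\) must have their first ten coordinates complemented, and the \(17\)-word code still has to be exhibited.

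For the upper bound, the Delsarte program is nowhere near \(17\). For \(A(11,\{4,6\})\) the point \(A_4=55/3\), \(A_6=22/3\) is feasible. The polynomial \(g=\tfrac{4}{15}K_2^{(11)}+\tfrac{1}{15}K_3^{(11)}\) has \(g(4)=g(6)=-1\) and \(g(0)=77/3\), so the optimum is exactly \(80/3\) and the bound is only \(26\). The length-ten program with support \([4,7]\) is no better: it is feasible at \((A_4,A_5,A_6,A_7)=(35/3,4,10/3,20/3)\), again with value \(80/3\). Integrality cannot close a gap from \(26\) to \(17\), and a bare rank-\(11\) Gram condition gives no evident contradiction.

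The paper's argument is combinatorial. Assume \(18\) words with \(e\) distance-\(4\) pairs. It equates \(\|HH^{\mathsf T}\|_F^2=2484+16e\) with \(\|H^{\mathsf T}H\|_F^2\ge3764\). The latter bound holds because at least \(25\) pairs among the eleven columns have equal weight parity, so their sign inner products are \(\equiv2\pmod 4\). This forces \(e\ge80\), so some word has at least nine distance-\(4\) neighbours. After translating it to \(0\), \(R(3,3)=6\) yields a monochromatic triangle of weight-four supports. That leaves five configurations, which the appendix clique search excludes. Your ``last resort'' exhaustive check has no such reduction and is not yet a proof plan.
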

\begin{proof}
Map a length-ten word \(x\) to \((x,0)\) if its weight is even, and
to \((\overline{x},1)\) otherwise. Words of the same parity retain
their distance; words of opposite parity have new distance
\(11-d_H(x,y)\). Thus distances \(4,5,6,7\) become \(4,6\).
Conversely, translate a length-eleven code with distances \(4,6\) to
contain zero. All words then have even weight, and the inverse map gives
distances in \(\{4,5,6,7\}\). This proves the first equality.
The length-ten code
\begin{align*}
&(0,15,227,117,188,342,218,422,626,\\
&\qquad695,724,750,795,808,845,897,505)
\end{align*}
has seventeen words, with unordered distance multiplicities
\(46,36,24,30\) at distances \(4,5,6,7\), respectively. Its first
fourteen columns distinguish the eleven rows after adjoining the zero row.

For the upper bound, suppose an eighteen-word length-eleven code exists.
Let \(e\) be its number of distance-four pairs, and let \(H\) be its
\(18\times11\) sign matrix. Its row Gram matrix gives
\[
\|HH^{\mathsf T}\|_F^2
=18\cdot11^2+2\bigl(9e+\tbinom{18}{2}-e\bigr)=2484+16e.
\]
Among eleven binary columns, at least
\(\binom52+\binom62=25\) pairs have the same weight parity. Each
corresponding sign inner product is congruent to two modulo four, so its
square is at least four. Hence
\[
\|H^{\mathsf T}H\|_F^2\ge11\cdot18^2+2\cdot25\cdot4=3764.
\]
The two Frobenius norms are equal, so \(e\ge80\). Some codeword has
at least nine distance-four neighbours. Translate that word to zero.
Among any six of its weight-four neighbours, the two-colouring by
distances four and six contains a monochromatic triangle. This is the
elementary Ramsey fact \(R(3,3)=6\).

Let the three supports in this triangle have common pairwise intersection
size \(a\in\{1,2\}\) and triple intersection size \(h\). Their Venn
regions have sizes \(h\), three copies of \(a-h\), three copies of
\(4-2a+h\), and \(3a-h-1\) outside the union. Thus, up to coordinate
permutation, there are exactly five possibilities, with
\((a,h)=(1,0),(1,1),(2,0),(2,1),(2,2)\). They are represented by the
four-word sets in Table~\ref{tab:five-configurations}.

Fourteen more words would have to be compatible with all four fixed
words and with one another. At least six of them must have weight four,
since zero has at least nine weight-four neighbours and only three have
been fixed. The finite verification in Appendix~\ref{sec:eleven-check}
rules out such an extension in all five cases. Therefore no eighteen-word
code exists, completing the upper bound.
\end{proof}

The Delsarte bound~\cite{Delsarte1973} used below follows from the polynomial
\[
K_j^{(n)}(x)=\sum_h(-1)^h\binom{x}{h}\binom{n-x}{j-h}.
\]

\begin{lemma}\label{lem:t14delsarte}
Let \(C\subseteq\{0,1\}^n\) have all nonzero distances in \(D\). If
\(g=\sum_{j=1}^n c_jK_j^{(n)}\), where \(c_j\ge0\) and \(g(a)\le-1\) for
every \(a\in D\), then \(\lvert C\rvert\le1+g(0)\). For \(n=9\) and \(D=\{3,4,5,6,7\}\), take
\(c_1=c_2=c_7=c_8=1/3\), \(c_9=1\), and all other coefficients zero.
Then \(1+g(0)=32\), proving \(\gamma(10,7)\le32\).
\end{lemma}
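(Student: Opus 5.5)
The plan is to prove the general inequality by the standard Delsarte argument, and then check the specific polynomial by direct evaluation of Krawtchouk values.

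For the general statement, I will work with the distance distribution of \(C\),
\[
B_i=\frac{1}{|C|}\#\{(x,y)\in C^2:d_H(x,y)=i\}.
\]
It satisfies \(B_0=1\), \(\sum_i B_i=|C|\), and, by hypothesis, \(B_i=0\) for \(i\notin D\cup\{0\}\). The key input is Delsarte's inequality \(\sum_i B_iK_j^{(n)}(i)\ge0\) for every \(j\)~\cite{Delsarte1973}. It follows from the identity
\[
\sum_i B_iK_j^{(n)}(i)=\frac1{|C|}\sum_{\operatorname{wt}(u)=j}\Bigl(\sum_{x\in C}(-1)^{u\cdot x}\Bigr)^2,
\]
which holds because \(K_j^{(n)}(d_H(x,y))=\sum_{\operatorname{wt}(u)=j}(-1)^{u\cdot(x+y)}\). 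Multiplying by \(c_j\ge0\) and summing over \(j\ge1\) gives \(\sum_i B_i\,g(i)\ge0\). Separating the \(i=0\) term and using \(g(a)\le-1\) on \(D\), I get
\[
0\le g(0)-\sum_{a\in D}B_a=g(0)-(|C|-1),
\]
hence \(|C|\le1+g(0)\).

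For the specific case \(n=9\), the plan is to use the symmetry \(K_{n-j}^{(n)}(a)=(-1)^aK_j^{(n)}(a)\) and the value \(K_9^{(9)}(a)=(-1)^a\). These collapse the chosen polynomial to
\[
g(a)=\tfrac13\bigl(1+(-1)^a\bigr)\bigl(K_1(a)+K_2(a)\bigr)+(-1)^a,
\]
where \(K_1(a)=9-2a\) and \(K_2(a)=\bigl((9-2a)^2-9\bigr)/2\). Then I evaluate the cases.

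\begin{enumerate}[leftmargin=1.4em]
\item For odd \(a\in\{3,5,7\}\), the first term vanishes, so \(g(a)=-1\).
\item For \(a=4\), \(K_1+K_2=1-4=-3\), so \(g=-2+1=-1\).
\item For \(a=6\), \(K_1+K_2=-3+0=-3\), so again \(g=-1\).
\end{enumerate}

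At \(a=0\), the value is \(g(0)=\tfrac13(9+36+36+9)+1=31\), so \(1+g(0)=32\). Since \(\gamma(10,7)=A(9,[3,7])\) by definition with \(q=10\), \(L=7\), this proves \(\gamma(10,7)\le32\).

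The only step needing care is the positivity of the dual distribution, that is, the character-sum identity above. It is standard, so I would cite it rather than reprove it. The rest is routine arithmetic, and it is worth noting that \(g\) equals \(-1\) exactly on all of \(D\).
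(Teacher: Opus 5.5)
Your proposal is correct and follows the paper's argument: weight Delsarte's inequalities by the nonnegative \(c_j\), sum, and use \(g\le-1\) on \(D\) to get \(|C|\le1+g(0)\). Your reduction via \(K_{n-j}^{(n)}(a)=(-1)^aK_j^{(n)}(a)\) is only a tidier way of doing the paper's direct substitution, and your values \(g(a)=-1\) on \(D\) and \(g(0)=31\) are right.
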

\begin{proof}
The Delsarte inequalities state that the average of every
\(K_j^{(n)}\) over ordered pairs of codewords is nonnegative. Multiplying by
the nonnegative \(c_j\), summing, and using \(g(a)\le-1\) off the diagonal
gives \(0\le |C|g(0)-|C|(|C|-1)\). The displayed values follow by direct
substitution in the integer formula for \(K_j^{(n)}\).
\end{proof}

\begin{theorem}\label{thm:t14}
Let \(d\ge4\), \(N=2^d\), and \(t\in\{14,15\}\). The exact values of
\(N-\wdim_{N-t}(Q_d)\) are given in Table~\ref{tab:t14}.
In particular, \(\wdim_{2^{18}-t}(Q_{18})=2^{18}-10\).
\end{theorem}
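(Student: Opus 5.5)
The plan is to follow the template of the proof of Theorem~\ref{thm:to13} with \(L=7\). First I will compute \(C_d(14)=C_d(15)\) from the capacities \(\gamma(q,7)\) for \(7<q\le14\). Then I will invoke Theorem~\ref{thm:reduction} wherever the tail hypothesis holds, and handle the remaining small dimensions by explicit complements. Theorem~\ref{thm:parity-pairing} makes \(t=15\) automatic once \(t=14\) is done.

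\textbf{Capacities.} The case \(q=14\) is excluded for \(d\ge3\) by the three-column parity obstruction, since \(L=7\) is odd (Theorem~\ref{thm:general}(1)). The capacities \(\gamma(13,7)=13\), \(\gamma(12,7)=13\) and \(\gamma(11,7)=17\) are Theorems~\ref{thm:g13}, \ref{thm:g12} and~\ref{thm:g11}.

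For \(q=10\), Lemma~\ref{lem:t14delsarte} gives \(\gamma(10,7)\le32\). I would try to match it with an explicit \(32\)-word code of length nine whose distances lie in \([3,7]\). My first attempt would be a linear \([9,5]\) code with nonzero weights in \(\{3,\dots,7\}\): shorten or puncture a good linear code of length \(10\) or \(11\), and check the weight enumerator. Failing that, I would run a computer search, certified as in the appendix.

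For \(q=9\), the allowed distances are \([2,7]\) in length \(8\). This is exactly the folded-cube independence problem of Lemma~\ref{lem:folded}, so \(\gamma(9,7)=93\). For \(q=8\), every word of length \(7\) is allowed, so \(\gamma(8,7)=128\). For \(q\le7\), the capacity is unrestricted. This matches the sharp threshold \(2^7+1=129\) of Theorem~\ref{thm:stabilization}.

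\textbf{Capacity table.} This should give \(C_d=13\) for \(4\le d\le13\); here \(d\ge4\) is the least \(d\) at which a prefix can separate \(13\) rows, and I need \(\gamma(13,7)=13\ge d\). Further:
\begin{itemize}[leftmargin=1.4em]
\item \(C_d=11\) for \(14\le d\le17\);
\item \(C_d=10\) for \(18\le d\le32\), which yields the highlighted value \(\wdim_{2^{18}-t}(Q_{18})=2^{18}-10\);
\item \(C_d=9\) for \(33\le d\le93\);
\item \(C_d=8\) for \(94\le d\le128\);
\item \(C_d=7\) for \(d\ge129\).
\end{itemize}
In each range I must also supply a column ordering whose first \(d_0\) columns separate the \(q\) rows, in the manner of Lemma~\ref{lem:row-separation}. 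The stated codes for \(q=11\) and \(q=13\) already come with separating prefixes. For the folded-cube code and for the full cube \(\{0,1\}^7\), I would place unit-type columns first, as before.

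\textbf{Tail hypothesis and small dimensions.} I would substitute into Theorem~\ref{thm:fullcube} to verify \(F_d(r)-14r\ge N-14\) for all \(3\le r\le d\). This should hold from about \(d=7\), and it propagates upward via \(2^{d-1}\ge14(d-1)\) once that inequality holds. For \(d=4,5,6\), I would exhibit explicit \(13\)-vertex complements and verify \(\min\Delta_{V\setminus T}=N-14\) directly, as in the table of Theorem~\ref{thm:to13}. For \(d=4\) the complement is forced to be large, so the table value there may differ and must be computed directly. In those dimensions the lower bound still comes from Lemma~\ref{lem:distance-two}.

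\textbf{Main obstacle.} I expect the main difficulty to be the lower-bound construction attaining \(\gamma(10,7)=32\), together with its separating prefix. The other capacities are either already proved or follow from earlier lemmas.
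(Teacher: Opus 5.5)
\textbf{Gap at \(d=4\).} Your plan has a genuine gap at \(d=4\), where the table value is \(12\), not \(13\). The relaxation there gives \(C_4(t)=13\): thirteen rows of length four with all column distances \(6\) or \(7\) do exist. So Lemma~\ref{lem:distance-two} only yields \(\wdim_{N-t}(Q_4)\ge 3\). Your sentence ``in those dimensions the lower bound still comes from Lemma~\ref{lem:distance-two}'' is therefore false in this dimension. Your plan to exhibit a \(13\)-vertex complement at \(d=4\) cannot succeed, and the remark that the value ``may differ'' does not determine it.

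\textbf{The missing idea.} You need the fact that \(Q_4\) has metric dimension \(4\).
\begin{itemize}
\item Translate one of three landmarks to \(0\) and call the others \(u,v\).
\item If two coordinates carry the same pair \((u_i,v_i)\), the corresponding unit vectors are unresolved.
\item Otherwise the four coordinates carry the pairs \(00,01,10,11\). The two weight-two vertices supported on the coordinates labelled \(\{00,11\}\) and \(\{01,10\}\) are then equidistant from all three landmarks.
\end{itemize}
Hence \(\wdim_1(Q_4)\ge4\), so at most \(12\) vertices can be omitted for both \(t=14\) (\(k=2\)) and \(t=15\) (\(k=1\)). You also need a matching four-landmark set, for example \(\{0,3,5,6\}\). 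Its sign matrix \(H\) is Hadamard. Distance differences between distinct vertices form \(Hz\) with \(z\in\{0,\pm1\}^4\) nonzero. Since \(\|Hz\|_2^2=4\|z\|_2^2\) and \(\|Hz\|_\infty\le\|z\|_2^2\), this gives \(\|Hz\|_1\ge4\ge2\).

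\textbf{The rest of the plan.} Everything else follows the paper's route: the parity obstruction for \(q=14\), Theorems~\ref{thm:g13}, \ref{thm:g12}, \ref{thm:g11}, the Delsarte bound, the folded-cube capacity, the tail inequality from \(d=7\), and direct checks at \(d=5,6\). Using Theorem~\ref{thm:parity-pairing} for \(t=15\) is also fine; the paper instead notes that the same complement meets the weaker requirement.

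\textbf{The \(q=10\) step.} This step, which you flag as the main obstacle, is not one; your linear idea works.
\begin{itemize}
\item Take the kernel of the \(4\times9\) matrix whose columns are the nonzero vectors of \(\mathbb F_2^4\) other than \(e_1,\dots,e_4,\mathbf1,e_1+e_2\).
\item The columns are distinct, so the minimum weight is \(3\).
\item They sum to \(e_1+e_2\), which is nonzero and not a column, so no word has weight \(8\) or \(9\).
\item Its dual has minimum weight \(3\), so the code separates the ten rows.
\end{itemize}
The paper instead lists an explicit nonlinear \(32\)-word code.
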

\begin{proof}
For \(d=4\), three landmarks cannot resolve the cube. Translate one
to zero, and call the others \(u,v\). If two coordinates have the same
pair \((u_i,v_i)\), their unit vectors are unresolved. Otherwise the
four coordinate pairs are \(00,01,10,11\); the vertices supported on
\(\{00,11\}\) and \(\{01,10\}\) then have the same distance to all
three landmarks. Thus \(\wdim_1(Q_4)\ge4\).
The four landmarks \(\{0,3,5,6\}\) have an order-four Hadamard sign
matrix \(H\). For distinct vertices, their distance differences form
\(Hz\) for a nonzero \(z\in\{0,\pm1\}^4\). Since
\(\|Hz\|_2^2=4\|z\|_2^2\), and \(\|Hz\|_1\) is an even integer,
its value is at least four when \(z\) has at least two nonzero entries;
with one entry it equals four directly. This proves the asserted value
for both deficits.

For \(5\le d\le 13\), Theorem~\ref{thm:g13} supplies \(13\) columns, and the first five already distinguish all thirteen rows. Direct evaluation of \(\Delta_{V\setminus T}\) on \(Q_5,Q_6,Q_7\) meets \(N-14\); for \(d\ge 7\) the tail inequality of Theorem~\ref{thm:reduction} holds for \(t=14\). The respective minima are \(18,50,114\), all equal to \(N-14\). The same construction meets the weaker \(N-15\) requirement, while both deficits have the same distance-two upper bound. Distance two forbids \(q=14\). Hence \(C_d(t)=13\).

For \(14\le d\le17\), Theorems~\ref{thm:g13} and~\ref{thm:g12}
give \(q\le11\). The seventeen columns in Theorem~\ref{thm:g11}
have a separating fourteen-column prefix. The tail inequality therefore
gives \(C_d(t)=11\) throughout this range.

For \(q=10\), the length-nine code
\begin{align*}
&(0,7,26,44,54,78,98,113,149,170,185,193,219,220,237,247,\\
&\qquad269,309,315,323,336,351,361,380,388,395,408,423,434,470,480,494)
\end{align*}
has all distances in \(\{3,\ldots,7\}\), and Lemma~\ref{lem:t14delsarte} proves \(\gamma(10,7)\le32\). Its first eighteen columns distinguish all ten row coordinates. Theorem~\ref{thm:g11} now yields \(C_d(t)=10\) for \(18\le d\le32\).

Lemma~\ref{lem:folded} gives \(\gamma(9,7)=93\).
The seven columns \(e_i+e_8\), \(1\le i<8\), distinguish the nine rows of that construction. For \(q=8\), all \(2^7=128\) normalized columns are allowed; the seven unit columns distinguish the eight rows. For \(q=L=7\), arbitrary repeated columns are allowed. Therefore \(C_d(t)=9\) for \(33\le d\le93\), \(C_d(t)=8\) for \(94\le d\le128\), and \(C_d(t)=7\) for \(d\ge129\). The tail inequality holds for these \(t\) from \(d=7\).
\end{proof}

\begin{table}[ht]
\centering
\caption{The number of omitted vertices
\(N-\wdim_{N-t}(Q_d)\), where \(N=2^d\) and \(t=14,15\).
At \(d=4\), the actual
value is \(12\), whereas the distance-two relaxation \(C_4(t)\) equals \(13\).}
\label{tab:t14}
\begin{tabular}{@{}ll@{}}
\toprule
\(d\) & \(N-\wdim_{N-t}(Q_d)\) \\
\midrule
\(4\) & \(12\) \\
\(5\)--\(13\) & \(13\) \\
\(14\)--\(17\) & \(11\) \\
\(18\)--\(32\) & \(10\) \\
\(33\)--\(93\) & \(9\) \\
\(94\)--\(128\) & \(8\) \\
\(\ge129\) & \(7\) \\
\bottomrule
\end{tabular}
\end{table}

To verify the distinction at \(d=4\), the thirteen rows
\(\{0,1,2,3,4,5,6,7,8,9,10,12,15\}\) have pairwise column
distances \(6\) or \(7\), so \(C_4(t)=13\); they cannot be the
complement of a weak resolving set of the required size.

\FloatBarrier
\section{Nonbinary Hamming graphs}\label{sec:nonbinary}

Let \(H(d,q)=K_q^{\square d}\), with \(N=q^d\). Distance is Hamming distance on \([q]^d\).

Fern\'andez, Klav\v{z}ar, Kuziak, Mu\~noz-M\'arquez and Yero
proved~\cite{FernandezKlavzarKuziakMunozYero2026} that the largest feasible
parameter is \(\kappa(H(d,q))=2q^{d-1}\).

The full-vertex sum needed below also has a direct formula.

\begin{proposition}\label{prop:hamF}
If \(d(x,y)=r\) in \(H(d,q)\), then
\[
F_{d,q}(r)=q^{d-r}\sum_{a+b+c=r}\binom{r}{a,b,c}(q-2)^c\,\lvert a-b\rvert,
\]
where \(a\) counts coordinates at which a landmark equals \(x\), \(b\)
counts those at which it equals \(y\), and \(c\) counts those at which it
equals neither.  In particular \(F_{d,q}(1)=2q^{d-1}=\kappa\), and
\(F_{d,2}(r)=F_d(r)\).
\end{proposition}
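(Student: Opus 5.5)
We will mirror the proof of Theorem~\ref{thm:fullcube}. Since the automorphism group of \(H(d,q)\) acts transitively on ordered pairs at any fixed distance, \(\Delta_{V}(x,y)\) depends only on \(r=d(x,y)\). We may therefore permute and relabel symbols within coordinates. This lets us take \(x\) and \(y\) to differ exactly in the first \(r\) coordinates.

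Fix a landmark \(w\). On the last \(d-r\) coordinates, \(x\) and \(y\) agree, so each such coordinate contributes the same indicator \(\ind_{w_i\ne x_i}\) to \(d(w,x)\) and to \(d(w,y)\). These contributions cancel in the difference. We classify the first \(r\) coordinates by whether \(w_i=x_i\) (\(a\) coordinates), \(w_i=y_i\) (\(b\) coordinates), or neither (\(c\) coordinates). Then \(d(w,x)\) receives \(b+c\) from these coordinates and \(d(w,y)\) receives \(a+c\). Hence
\[
\lvert d(w,x)-d(w,y)\rvert=\lvert a-b\rvert.
\]

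Next we count landmarks with a prescribed triple \((a,b,c)\). There are \(\binom{r}{a,b,c}\) ways to choose which of the first \(r\) coordinates fall in each class. Each of the \(c\) ``neither'' coordinates has \(q-2\) symbol choices, and each of the \(d-r\) free coordinates has \(q\) choices. Summing \(\lvert a-b\rvert\) over all landmarks gives the displayed formula for \(F_{d,q}(r)\).

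For \(r=1\), the only triples with a nonzero summand are \((1,0,0)\) and \((0,1,0)\). Each contributes \(1\), so
\[
F_{d,q}(1)=q^{d-1}\cdot 2=2q^{d-1}.
\]
This equals \(\kappa\) by the cited result of Fern\'andez et al. For \(q=2\), the factor \((q-2)^c\) vanishes unless \(c=0\), using the convention \(0^0=1\). The sum then reduces to
\[
2^{d-r}\sum_{a}\binom{r}{a}\lvert r-2a\rvert,
\]
which equals \(F_d(r)\) by Lemma~\ref{lem:walk} and Theorem~\ref{thm:fullcube}.

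There is no real obstacle here. The only points needing care are the symmetry reduction, which uses the transitivity of the automorphism group on pairs at distance \(r\), and the convention \(0^0=1\) in the specialization to \(q=2\).
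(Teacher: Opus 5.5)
Your proof is correct and follows essentially the same route as the paper: the \(d-r\) agreeing coordinates give the factor \(q^{d-r}\), the remaining \(r\) coordinates are classified by the pattern \((a,b,c)\) with \(\binom{r}{a,b,c}(q-2)^c\) choices and distance difference \(\lvert a-b\rvert\), and the formula is then specialized at \(r=1\) and at \(q=2\) (with \(0^0=1\)). Your appeal to transitivity of the automorphism group is harmless but unnecessary, since the count works directly for any pair at distance \(r\).
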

\begin{proof}
The coordinates on which \(x\) and \(y\) agree contribute the factor
\(q^{d-r}\).  Among the other \(r\) coordinates, suppose a landmark agrees
with \(x\) in \(a\) positions, with \(y\) in \(b\) positions, and with
neither in \(c\) positions.  There are
\(\binom{r}{a,b,c}(q-2)^c\) such patterns, and the absolute difference of
the two distances is \(|a-b|\).  Summing over \(a+b+c=r\) proves the
formula.  Substitution of \(r=1\) gives \(F_{d,q}(1)=2q^{d-1}\), and
setting \(q=2\) gives Theorem~\ref{thm:fullcube}.
\end{proof}

\begin{theorem}\label{thm:hamkappa}
\(\wdim_{\kappa}(H(d,q))=q^d\). Equivalently, no proper subset of \(V\) is weak \(\kappa\)-resolving.
\end{theorem}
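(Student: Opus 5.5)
Since \(\kappa=2q^{d-1}\) and every vertex set \(S\subseteq V\) satisfies \(\Delta_S\le\Delta_V\) pairwise, it suffices to show that removing any single vertex \(w\) destroys the weak \(\kappa\)-resolving property. Monotonicity then handles all smaller sets. I will exhibit an adjacent pair \(x,y\) with \(\Delta_V(x,y)=\kappa\) exactly and \(\Delta_w(x,y)\ge1\). Then \(\Delta_{V\setminus\{w\}}(x,y)\le\kappa-1\).

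By Proposition~\ref{prop:hamF}, every adjacent pair has \(\Delta_V(x,y)=F_{d,q}(1)=2q^{d-1}=\kappa\). So I only need an adjacent pair that \(w\) actually distinguishes. Take \(x=w\) and let \(y\) be any neighbour of \(w\), changing one coordinate. Then \(d(w,x)=0\) and \(d(w,y)=1\), so \(\Delta_w(x,y)=1\).

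Hence \(\Delta_{V\setminus\{w\}}(x,y)=\kappa-1<\kappa\), and no proper subset is weak \(\kappa\)-resolving. Conversely, \(V\) itself is weak \(\kappa\)-resolving by the definition of \(\kappa\) as \(\min_{x\ne y}\Delta_V(x,y)\), which gives \(\wdim_\kappa=q^d\).

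There is essentially no obstacle. The only point to check is that the adjacent-pair value of \(\Delta_V\) equals \(\kappa\) rather than exceeding it, which is exactly the \(r=1\) evaluation already recorded in Proposition~\ref{prop:hamF}. That evaluation agrees with the quoted value \(\kappa(H(d,q))=2q^{d-1}\).
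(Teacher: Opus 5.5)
Your proposal is correct and follows essentially the same route as the paper. Both arguments use two facts: every adjacent pair is tight, with $\Delta_V(x,y)=F_{d,q}(1)=\kappa$, and any omitted vertex $w$ contributes $1$ to some adjacent pair. The paper picks a pair differing in a coordinate $j$ where $w_j$ is one of the two symbols; your choice $x=w$ is a special case of this. Your explicit monotonicity step $\Delta_S\le\Delta_{V\setminus\{w\}}$ spells out what the paper leaves implicit for proper subsets that omit more than one vertex.
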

\begin{proof}
For an adjacent pair differing in coordinate \(j\) with symbols \(a\neq b\), a landmark contributes \(1\) if and only if its \(j\)-th coordinate lies in \(\{a,b\}\). The full vertex set meets \(\kappa\) exactly. Omitting any vertex \(w\) with \(w_j\in\{a,b\}\) drops that pair strictly below \(\kappa\); choosing the pair so that this happens is always possible.
\end{proof}

For a deficit \(t\) from \(\kappa\), write \(k=\kappa-t\), let \(T=V\setminus S\), and put
\[
U_q(t):=q\left\lfloor\frac t2\right\rfloor+(t\bmod2).
\]

\begin{lemma}\label{lem:qarycapacity}
If \(S\) is weak \((\kappa-t)\)-resolving, then \(|T|\le U_q(t)\). This bound is attained by a single column-frequency profile: if \(t=2h\), use \((h,\ldots,h)\); if \(t=2h+1\), use \((h+1,h,\ldots,h)\).
\end{lemma}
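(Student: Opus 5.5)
Fix a coordinate \(j\) and record, for each symbol \(s\in[q]\), the frequency \(n_s=|\{w\in T: w_j=s\}|\); this is the column-frequency profile of the omitted-vertex matrix in column \(j\). The plan follows the argument of Theorem~\ref{thm:hamkappa}, now quantitatively. Take an adjacent pair \(x,y\) that differs only in coordinate \(j\), with symbols \(a\ne b\) there. A landmark \(w\) has \(|d(x,w)-d(y,w)|=1\) when \(w_j\in\{a,b\}\), and \(0\) otherwise. Hence
\[
\Delta_S(x,y)=\Delta_V(x,y)-\Delta_T(x,y)=2q^{d-1}-(n_a+n_b).
\]
Weak \((\kappa-t)\)-resolvability therefore forces \(n_a+n_b\le t\) for every pair of distinct symbols \(a,b\).

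The key step is to maximise \(\sum_s n_s=|T|\) subject to \(n_a+n_b\le t\) for all \(a\ne b\) with \(n_s\ge0\) integers. Order the entries so that \(n_1\ge n_2\ge\cdots\ge n_q\). Suppose first that \(n_1\le\lfloor t/2\rfloor\). Then \(|T|\le q\lfloor t/2\rfloor\le U_q(t)\). Suppose instead that \(n_1>\lfloor t/2\rfloor\). Then every other entry satisfies \(n_s\le t-n_1\le\lceil t/2\rceil-1\). For even \(t=2h\) this gives \(n_s\le h-1\), so
\[
|T|\le n_1+(q-1)(t-n_1)=(q-1)t-(q-2)n_1 .
\]
This bound decreases in \(n_1\) because \(q\ge3\). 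At \(n_1=h+1\) it equals \(qh-q+2\le qh\). For odd \(t=2h+1\) the same computation at \(n_1=h+1\) gives \((h+1)+(q-1)h=U_q(t)\), and larger \(n_1\) only decreases the bound. So \(|T|\le U_q(t)\) in every case.

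For the attainment claim, equality in each branch pins down the profile. When \(t=2h\), the second branch is strictly below \(qh\) if \(q\ge3\), so equality needs all \(n_s=h\). When \(t=2h+1\), the first branch gives at most \(qh<U_q(t)\), so equality needs \(n_1=h+1\) and all other \(n_s=h\). Every column then has exactly this profile up to permuting symbols, which is what ``a single column-frequency profile'' should mean. I expect the only delicate point to be the boundary cases with small \(h\), for instance \(h=0\). For \(t=1\) the odd-case profile gives \(|T|=1\), consistent with \(U_q(1)=1\). For \(t=0\) the bound gives \(|T|=0\), consistent with Theorem~\ref{thm:hamkappa}. The monotonicity argument uses \(q\ge3\) strictly in the even case, which matches the paper's standing hypothesis \(q\ge3\) for this section.
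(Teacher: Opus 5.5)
Your argument is correct and follows the paper's route: adjacent pairs differing in one coordinate force \(n_a+n_b\le t\) in every column, and \(|T|\) is then bounded by maximising the column sum. The paper does the maximisation in one line, \(|T|\le f_1+f_2+(q-2)f_2\le t+(q-2)\lfloor t/2\rfloor\), rather than splitting on \(n_1\), and it only notes that the displayed profiles reach the bound, whereas your equality analysis (which genuinely needs \(q\ge3\)) also shows they are the unique extremal profiles.
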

\begin{proof}
For an adjacent pair differing in a fixed coordinate with symbols \(a\ne b\), the omitted contribution is \(f(a)+f(b)\), where \(f\) is the symbol-frequency vector in that column. Thus the two largest frequencies \(f_1\ge f_2\) satisfy \(f_1+f_2\le t\). If \(t=2h\), then \(f_2\le h\) and
\[
|T|\le f_1+(q-1)f_2\le2h+(q-2)h=qh.
\]
If \(t=2h+1\), the same argument gives \(|T|\le2h+1+(q-2)h=qh+1\). The displayed profiles attain the respective bounds.
\end{proof}

\begin{theorem}\label{thm:qaryeventual}
Fix \(q\ge3\) and \(t\ge0\). For every sufficiently large \(d\), with \(\kappa=2q^{d-1}>t\),
\[
\boxed{\wdim_{\kappa-t}(K_q^{\square d})=q^d-U_q(t).}
\]
More explicitly, the formula holds whenever \(d\ge2\), \(\kappa>t\), and
\begin{equation}
2(q-2)q^{d-2}\ge d\,U_q(t)-t. \label{eq:qary-threshold}
\end{equation}
\end{theorem}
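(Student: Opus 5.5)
The plan is to match the lower bound of Lemma~\ref{lem:qarycapacity} with an explicit omitted set of size exactly \(U_q(t)\). For the lower bound, Lemma~\ref{lem:qarycapacity} already gives \(|T|\le U_q(t)\), hence \(\wdim_{\kappa-t}\ge q^d-U_q(t)\), with no assumption on \(d\). Everything therefore rests on the upper bound. I would split it into a construction realising the extremal column profiles in every coordinate simultaneously, and a check of all vertex pairs against \(\kappa-t\), sorted by distance.

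\emph{Construction.} Let \(\Z_q\) act on \([q]^d\) by adding \(j\) to every coordinate. Each orbit has size \(q\) and hits every symbol exactly once in every coordinate, and there are \(q^{d-1}\) orbits. Write \(t=2h\) or \(t=2h+1\).
\begin{itemize}[leftmargin=1.4em]
\item For \(t=2h\), take \(T\) to be a union of \(h\) distinct orbits. This is possible because \(\kappa>t\) gives \(q^{d-1}>h\). Every column of \(T\) then has frequency profile \((h,\dots,h)\).
\item For \(t=2h+1\), adjoin one further vertex outside these orbits. Every column then has profile \((h+1,h,\dots,h)\).
\end{itemize}
In both cases \(|T|=U_q(t)\). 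For an adjacent pair differing in a coordinate with symbols \(a\ne b\), the omitted contribution is \(f(a)+f(b)\le t\), as in the proof of Lemma~\ref{lem:qarycapacity}. Since the full set gives exactly \(F_{d,q}(1)=\kappa\) by Proposition~\ref{prop:hamF}, adjacent pairs keep at least \(\kappa-t\).

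\emph{Pairs at distance \(r\ge2\).} By the triangle inequality, each omitted vertex removes at most \(r\le d\). The remaining contribution is therefore at least \(F_{d,q}(r)-d\,U_q(t)\), and it suffices to show \(F_{d,q}(r)-\kappa\ge d\,U_q(t)-t\). I would do this in two steps.
\begin{itemize}[leftmargin=1.4em]
\item A direct evaluation of Proposition~\ref{prop:hamF} at \(r=2\) gives \(F_{d,q}(2)=4(q-1)q^{d-2}\). Hence \(F_{d,q}(2)-\kappa=2(q-2)q^{d-2}\), which is exactly the left side of~\eqref{eq:qary-threshold}.
\item It remains to show \(F_{d,q}(r)\ge F_{d,q}(2)\) for all \(r\ge2\). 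Write \(F_{d,q}(r)=q^d\,\mathbb E|a-b|\), where \(a-b\) is the sum of \(r\) i.i.d.\ steps equal to \(\pm1\) with probability \(1/q\) each and \(0\) otherwise. This sum is a mean-zero martingale, so by Jensen's inequality \(\mathbb E|S_r|\) is nondecreasing in \(r\).
\end{itemize}
With both steps,~\eqref{eq:qary-threshold} yields the bound for every pair, and the explicit statement follows. The ``sufficiently large \(d\)'' statement follows because the left side of~\eqref{eq:qary-threshold} grows exponentially in \(d\) (using \(q\ge3\)), while the right side grows linearly.

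\emph{Main obstacle.} The delicate step is the distance-\(r\) bound. I have to make sure the crude estimate ``at most \(r\) per omitted vertex'' is combined with the monotonicity of \(F_{d,q}\) rather than a weaker estimate, since only that matches~\eqref{eq:qary-threshold} exactly. I also need to confirm that the martingale/Jensen monotonicity holds for every \(r\), not just for even or odd \(r\) separately. If the Jensen step turned out awkward, I would instead compare \(F_{d,q}(r+1)\) with \(F_{d,q}(r)\) directly by conditioning on the last coordinate. There, \(\tfrac12(|s+1|+|s-1|)\ge|s|\) shows the increment is nonnegative.
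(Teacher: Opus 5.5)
Your proposal is correct and follows the paper's proof essentially step for step. It uses the same lower bound from Lemma~\ref{lem:qarycapacity}, the same diagonal-shift orbit construction (with one extra vertex when $t$ is odd), and the same reduction of nonadjacent pairs to $F_{d,q}(r)\ge F_{d,q}(2)=\kappa+2(q-2)q^{d-2}$ together with the crude loss of at most $d\,U_q(t)$. The only cosmetic difference is that you obtain monotonicity in $r$ from the submartingale property of $|S_r|$, whereas the paper computes the one-step increment directly ($2/q$ when the partial sum is $0$, zero otherwise), which is exactly your stated fallback.
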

\begin{proof}
Lemma~\ref{lem:qarycapacity} gives the lower bound. Identify the
alphabet with \(\mathbb Z/q\mathbb Z\). Simultaneously adding one to
all coordinates partitions the vertex set into \(q^{d-1}\) orbits of
size \(q\). Each orbit contains every symbol once in every coordinate.
Put \(h=\lfloor t/2\rfloor\) and take \(h\) complete orbits; when
\(t\) is odd, add one vertex from another orbit. Since \(t<2q^{d-1}\),
such an unused orbit exists. The resulting \(U_q(t)\) vertices are
distinct and each column has an attaining frequency profile. Thus every
adjacent pair loses at most \(t\).

For a pair at distance \(r\), divide Proposition~\ref{prop:hamF} by \(q^d\). The result is \(\mathbb E|Z_1+\cdots+Z_r|\), where independently \(Z_i=1,-1,0\) with probabilities \(1/q,1/q,(q-2)/q\). Conditional on a partial sum \(s\), adding one step changes the expected absolute value by \(2/q\) if \(s=0\) and by zero otherwise. Hence the full sums are nondecreasing in \(r\), and for every \(r\ge2\),
\[
F_{d,q}(r)\ge F_{d,q}(2)=\kappa+2(q-2)q^{d-2}.
\]
Each omitted vertex removes at most \(r\le d\). Under~\eqref{eq:qary-threshold}, every nonadjacent pair therefore retains at least \(\kappa-t\). The right side of~\eqref{eq:qary-threshold} grows only linearly in \(d\), while the left side grows exponentially, proving the eventual assertion.
\end{proof}

\section{Further questions}\label{sec:future}

The rectangular \(k=2\) values below the threshold \(m=2n-2\) remain
to be determined. For hypercubes, Theorem~\ref{thm:parity-pairing}
halves the number of distinct parameters, but does not determine the
values beyond the fixed-deficit range treated here. Their evaluation
requires further restricted-distance capacities and separating
constructions. The binary stabilization threshold in
Theorem~\ref{thm:stabilization} is sharp; determining a sharp threshold
for the nonbinary formula in Theorem~\ref{thm:qaryeventual} is another
natural question.

\appendix
\section{Finite capacity verification}\label{sec:certs}

Both verifications use a clique search bounded by greedy colourings,
a standard branch-and-bound method; see Tomita and
Seki~\cite{TomitaSeki2003}. We specify the graphs and the exhaustive
recursion used for the capacities in this paper.

\subsection{The seven-coordinate capacity}

One capacity in Theorem~\ref{thm:to13} uses exhaustive verification:
\(A(7,\{3,4,5\})=8\). Translate a code to contain zero. Form the
compatibility graph on the \(\binom73+\binom74+\binom75=91\) words
of weights \(3,4,5\), joining two words when their distance is in that
set. Its maximum clique has size seven. A clique search with a greedy
colouring bound proves the upper bound; a branch with current clique
size \(b\) and a \(c\)-colouring of its remaining candidates can be
discarded when \(b+c\le7\). The code
\[
(0,7,25,42,52,76,45,30)
\]
attains eight. The complete finite search is supplied with the source
in \path{anc/certify_appendix.py}.

\subsection{The eleven-coordinate capacity}\label{sec:eleven-check}

For each fixed set \(B\) in Table~\ref{tab:five-configurations}, form a
graph whose vertices are the words \(x\in\{0,1\}^{11}\) satisfying
\(d_H(x,b)\in\{4,6\}\) for every \(b\in B\). Join two vertices
when their distance is four or six. Let \(W\) be the vertices of weight
four. The required extension in Theorem~\ref{thm:g11} is a clique of
size fourteen containing at least six vertices of \(W\).

\begin{table}[ht]
\centering
\caption{The five configurations in Theorem~\ref{thm:g11}. None admits
an extension by fourteen compatible words with at least six of weight four.}
\label{tab:five-configurations}
\begin{tabular}{@{}cclc@{}}
\toprule
\(a\)&\(h\)& fixed words \(B\)&candidate vertices\\
\midrule
1&0&\(\{0,27,101,390\}\)&351\\
1&1&\(\{0,15,113,897\}\)&351\\
2&0&\(\{0,15,51,60\}\)&360\\
2&1&\(\{0,23,43,77\}\)&360\\
2&2&\(\{0,15,51,195\}\)&364\\
\bottomrule
\end{tabular}
\end{table}

The exhaustive check uses the following recursion. A state \((P,r,u)\)
asks whether the candidate set \(P\) contains an \(r\)-clique with at
least \(u\) vertices of \(W\). The initial state is \((V,14,6)\).
If \(r=0\), accept exactly when \(u=0\). Reject if \(|P|<r\) or
\(|P\cap W|<u\). Greedily partition \(P\) into independent sets and
list their vertices colour by colour as \(v_1,\ldots,v_m\). If
\(b_i\) is the colour of \(v_i\), the prefix through \(v_i\) has a
proper colouring with \(b_i\) colours. Process the list backwards.
Reject when \(b_i<r\); otherwise test
\[
\bigl(P\cap N(v_i),\ r-1,\ \max\{0,u-\mathbf1_{v_i\in W}\}\bigr),
\]
and, if it rejects, remove \(v_i\) from \(P\) and continue. Accept
if any recursive call accepts.

The colouring bound is valid because a clique contains at most one
vertex of each colour. Every clique either contains the current vertex
or is retained after its removal, so the recursion examines every
possible extension except branches excluded by proved upper bounds.
All five initial states reject. The complete integer-arithmetic
implementation is supplied as
\path{anc/certify_eleven_coordinate_capacity.cpp}; it requires only
a C++17 compiler and prints the result for each configuration. The
seventeen-word lower-bound construction is displayed in the theorem.

\FloatBarrier
\section*{Acknowledgments}

I am grateful to Dr.\ Jesse Geneson for his mentorship, support, and thoughtful
feedback throughout this work. His guidance helped me refine the arguments
and improve the presentation of the paper.

\section*{Declaration of generative AI and AI-assisted technologies}

The author directed the use of GPT-5.6 Sol and Kimi K3 in developing and
refining the proofs and manuscript through iterative discussion and review.
GPT-6 Astra assisted with further review of the arguments. The author takes
full responsibility for the content.

\begingroup
\small
\bibliographystyle{plain}
\bibliography{references/references}
\endgroup

\end{document}